\DeclareMathOperator{\arccot}{arccot}
\DeclareMathOperator{\Arg}{Arg}
\DeclareMathOperator{\sgn}{sgn}
\newcommand{\amd}{\mspace{10mu} \text{and} \mspace{10mu}}
\newcommand{\argdot}{\mspace{1mu} \cdot \mspace{1mu}}
\newcommand{\e}{\mathrm{e}}
\newcommand{\iu}{\mathrm{i}}
\newcommand{\C}{\mathbb{C}}
\newcommand{\R}{\mathbb{R}}
\newcommand{\Z}{\mathbb{Z}}
\newcommand{\abs}[1]{\left\lvert #1 \right\rvert}
\newcommand{\der}[1]{\frac{\mathrm{d}}{\mathrm{d}#1}}
\newcommand{\Set}[2]{\left\{ \mspace{1mu} #1 : #2 \mspace{1mu} \right\}}
\theoremstyle{plain}
\newtheorem{theorem}{Theorem}[section]
\newtheorem{lemma}[theorem]{Lemma}
\newtheorem{corollary}[theorem]{Corollary}
\theoremstyle{definition}
\newtheorem{definition}[theorem]{Definition}
\newtheorem{notation}{Notation}
\theoremstyle{remark}
\newtheorem{remark}{Remark}
\numberwithin{equation}{section}
\begin{document}

\title{Stability region and critical delay}
\author{Junya Nishiguchi\thanks{Mathematical Science Group, Advanced Institute for Materials Research (AIMR), Tohoku University, Katahira 2-1-1, Aoba-ku, Sendai, 980-8577, Japan}
\footnote{E-mail: \url{junya.nishiguchi.b1@tohoku.ac.jp}}}
\date{}

\maketitle

\begin{abstract}
The location of roots of the characteristic equation of a linear delay differential equation (DDE) determines the stability of the linear DDE.
However, by its transcendency, there is no general criterion on the contained parameters for the stability.
Here we mainly concentrate on the study of a simple transcendental equation $(*)$ $z + a - w \mathrm{e}^{-\tau z} = 0$ with coefficients of real $a$ and complex $w$ and a delay parameter $\tau > 0$ to tackle this transcendency brought by delay.
The consideration is twofold: (i) to give the stability region in the parameter space for Eq.~$(*)$ by using the critical delay and (ii) to compare this with a graphical method (so-called the method of D-partitions) by combining with the delay sequence obtained by conditions for purely imaginary roots.
By (i), we obtain another proof of Hayes' and Sakata's results, which reveals the nature of imaginary $w$ case in Eq.~$(*)$.
By (ii), we propose a method combining the analytic one and geometric one.
This combination is important because it will be helpful in studying characteristic equations having higher-dimensional parameters.

\begin{flushleft}
\textbf{2020 Mathematics Subject Classification}.
Primary 34K06, 34K08, 34K20;
Secondary 34K35, 37C75, 37N35

\end{flushleft}

\begin{flushleft}
\textbf{Keywords}.
Delay differential equations; critical delay; asymptotic stability; transcendental equations; method of D-partitions
\end{flushleft}

\end{abstract}

\tableofcontents

\section{Introduction}

Retarded functional differential equations (RFDEs) give a mathematical formulation of a class of delay differential equations (DDEs).
It is well-known that for the exponential stability of the trivial solution of a given linear RFDE, it is necessary and sufficient that all the roots of the corresponding characteristic equation have negative real parts (refs.\ Hale and Verduyn Lunel~\cite{Hale--Lunel 1993} and Diekmann et al.~\cite{Diekmann--vanGils--Lunel--Walther 1995}).
Such a characteristic equation is transcendental in general, and it has infinitely many roots in principle in the complex number plane $\C$.
For this reason, it is difficult to obtain the condition on the contained parameters for which all the roots are located in the left half of the complex plane $\C$.

Many authors have elaborated on the study of the transcendental equations obtained as characteristic equations of linear RFDEs, where various studies exist depending on the nature of the time-delay structure and on the form of differential equations (e.g., higher-order equations, systems of equations, or neutral equations).
We refer the reader to St\'{e}p\'{a}n~\cite{Stepan 1989} as a general reference of the stability problem of linear RFDEs.
See e.g., \cite{Bortz 2016}, \cite{Diekmann--Getto--Nakata 2016}, \cite{Nishiguchi 2016}, \cite{Petit--Asllani--Fanelli--Lauwens--Carletti 2016}, \cite{Breda--Menegonand--Nonino 2018}, \cite{Surya--Vyasarayani--Kalmar-Nagy 2018}, \cite{Fukuda--Kiri--Saito--Ueda 2022}, and \cite{Yanchuk--Wolfrum--Pereira--Turaev preprint} for recent studies.
See also \cite{Hadeler--Ruan 2007} for linear stability analysis of partial differential equations with time-delay.
However, the understanding of a simple transcendental equation of the form
\begin{equation}\label{eq: TE}
	z + a - w\e^{-\tau z} = 0 \tag{$*$}
\end{equation}
with complex coefficients $a$ and $w$ and with a delay parameter $\tau > 0$ has not yet completed.
Here Eq.~\eqref{eq: TE} is obtained as the characteristic equation of a scalar linear DDE
\begin{equation}\label{eq: x'(t) = -ax(t) + wx(t - tau)}
	\dot{x}(t) = - ax(t) + wx(t - \tau)
		\mspace{20mu}
	(\text{$t \in \R$, $x(t) \in \C$})
\end{equation}
by assuming a complex exponential solution $x(t) = \mathrm{e}^{zt}$.
In this paper, we will call the region in the parameter space for which all the roots of Eq.~\eqref{eq: TE} have negative real parts the \textit{stability region}.
Usually, delay parameters have special natures different from those which usual control parameters have.
For this reason, we separate delay parameters from the parameter space in which stability region is considered.

The purpose of this paper is to provide a unified perspective to obtain the stability region of Eq.~\eqref{eq: TE} with real $a$ and complex $w$, where the existence and the expression of the \textit{critical delay} $\tau_\mathrm{c}(a, w)$ are essential.
Here the critical delay $\tau_\mathrm{c}(a, w)$ means the threshold $\tau$-value which divides the positive real number line into two parts so that all the roots of Eq.~\eqref{eq: TE} have negative real parts for $\tau \in (0, \tau_\mathrm{c}(a, w))$, and Eq.~\eqref{eq: TE} has a root with positive real part for $\tau \in (\tau_\mathrm{c}(a, w), \infty)$.
In \cite{Matsunaga 2008}, it has been essentially shown that the critical delay $\tau_\mathrm{c}(a, w)$ is given by
\begin{equation}\label{eq: tau_c(a, w), |Arg(w)|, arccos}
	\tau_\mathrm{c}(a, w)
	= \frac{1}{\sqrt{\abs{w}^2 - a^2}} \left[ \abs{\Arg(w)} - \arccos{\left( \frac{a}{\abs{w}} \right)} \right]
\end{equation}
for real $a$ and complex $w$ satisfying
\begin{equation*}
	w \ne 0 \amd \Re(w) < a < \abs{w}.
\end{equation*}
Here $\Re(w)$ denotes the real part of $w$, $\Arg(w) \in (-\pi, \pi]$ is the principal value of the argument of nonzero $w$, and
\begin{equation*}
	\arccos \colon [-1, 1] \to [0, \pi]
\end{equation*}
is the inverse function  of $\cos|_{[0, \pi]} \colon [0, \pi] \to [-1, 1]$.
In the following three paragraphs, we briefly review some studies related to Eq.~\eqref{eq: TE}.

The case of imaginary $a$ is a source of many interesting dynamics (e.g., see \cite{Yeung--Strogatz 1999}, \cite{Hovel--Scholl 2005}, and \cite{Fiedler--Flunkert--Georgi--Hovel--Scholl 2007}).
We note that a necessary and sufficient condition on $a$, $w$, and $\tau$ for which all the roots of Eq.~\eqref{eq: TE} have negative real parts is obtained in \cite{Nishiguchi 2016}, and it has been applied to the stabilization of an unstable  equilibrium of autonomous ordinary differential equations by the delayed feedback control proposed by Pyragas~\cite{Pyragas 1992} (cf.\ \cite{Hovel--Scholl 2005}).
Here the choice of imaginary $a$ is essential because there is no stabilization in Eq.~\eqref{eq: TE} as increasing the delay parameter $\tau$ from $0$ if $a$ is real.
However, the detailed stability region in $(a, w)$-space, which is a real $4$-dimensional space because $a$ and $w$ are complex, has not been obtained (cf.\ \cite{Breda 2012}).

The case of real $a$ and $w$ is studied by Hayes~\cite{Hayes 1950}.
Unlike the complex coefficients case, the complete picture of the stability region in $(a, w)$-plane has been obtained (refs.\ \cite[Figure 5.1 in Chapter 5]{Hale--Lunel 1993} and \cite[Figure XI.1 in Chapter XI]{Diekmann--vanGils--Lunel--Walther 1995}).
In this case, it is important that $(a, w)$-plane is $2$-dimensional.
The case of real $a$ and complex $w$ is studied by Sakata~\cite{Sakata 1998}.
Although the picture of the stability region is depicted, the statement and the proof are complicated, which makes it difficult to understand the nature of this situation.
We note that $(a, w)$-space for real $a$ and complex $w$ is $3$-dimensional, but the consideration can be reduced to regions in $(a, \abs{w})$-plane by fixing the argument of $w$.

It should be pointed out that there are incorrect results in the literature.
Borrowing one of the results and the arguments discussed by Braddock and van den Driessche~\cite{Braddock--Driessche 1975/76}, B\'{e}lair~\cite[Theorem 2.6]{Belair 1993} has discussed the stability region of Eq.~\eqref{eq: TE} in the complex $w$-plane by letting $a \coloneqq 1$.
As is already pointed out by Takada, Hori, and Hara~\cite{Takada--Hori--Hara 2014}, \cite[Theorem 2.6]{Belair 1993} is incorrect.
We note that the corresponding result in \cite{Braddock--Driessche 1975/76} is also incorrect.

In this paper, we will show that the critical delay function
\begin{equation}\label{eq: critical delay function}
	(a, w) \mapsto \tau_\mathrm{c}(a, w) \in (0, \infty)
\end{equation}
has enough power to deduce the stability region of Eq.~\eqref{eq: TE} in $(a, w)$-space by solving the inequality
\begin{equation}\label{eq: inequality on critical delay}
	\tau_\mathrm{c}(a, w) > \tau
\end{equation}
with respect to $(a, w)$ for the case that $a$ is real and $w$ is complex.
We call the corresponding method the \textit{method by critical delay}.
The idea of solving inequality~\eqref{eq: inequality on critical delay} is to use the following expression of the critical delay:
\begin{equation}\label{eq: tau_c(a, w), |Arg(w)|, arccot}
	\tau_\mathrm{c}(a, w)
	= \frac{1}{\sqrt{\abs{w}^2 - a^2}} \left[ \abs{\Arg(w)} - \arccot{\left( \frac{a}{\sqrt{\abs{w}^2 - a^2}} \right)} \right].
\end{equation}
Here $\cot{\theta} \coloneqq \cos{\theta}/{\sin{\theta}}$ for $\theta \not\in \pi\Z$, and $\arccot \colon \R \to (0, \pi)$ denotes the inverse function of $\cot|_{(0, \pi)} \colon (0, \pi) \to \R$.

To see the effectiveness of the method by critical delay, we first concentrate our consideration on the real $a$, $w$ case and give another proof of Hayes' result by using the critical delay function.
From expression~\eqref{eq: tau_c(a, w), |Arg(w)|, arccot}, it will be turned out that solving an inequality
\begin{equation*}
	C(\theta)
	\coloneqq \theta \cot\theta
	< r
\end{equation*}
with respect to $\theta \in (0, \pi)$ is essential, and by the monotonicity of the function $C \colon (0, \pi) \to (-\infty, 1)$, the condition is simply expressed by the inverse function $C^{-1} \colon (-\infty, 1) \to (0, \pi)$.
This may be elementary but will give an insight into the analysis of real $a$ and imaginary $w$ case.
We next move to the consideration of the real $a$ and imaginary $w$ case and give another proof of Sakata's result by using the critical delay function.
Expression~\eqref{eq: tau_c(a, w), |Arg(w)|, arccot} also leads us to solve an inequality
\begin{equation}\label{eq: theta cot(theta - varphi) < r}
	C(\theta; \varphi)
	\coloneqq
	\theta \cot(\theta - \varphi) < r
\end{equation}
for $\theta \in (0, \varphi)$, and it plays an essential role to obtain the stability region in $(a, \abs{w})$-plane.
Here $\varphi \in (0, \pi)$ corresponds to the absolute value of the principal value of the argument $\Arg(w)$ of $w \in \C \setminus \R$, and the real parameter $r$ will be given appropriately.
We note that the real $w$ case can be considered as a limiting case of $\varphi \uparrow \pi$.

The above mentioned approach for the proof of Sakata's result is simple but needs elaborative calculations to obtain the behavior of the function $C(\argdot; \varphi)$.
In the literature, there is another method to obtain the stability regions of transcendental equations, which is the so called \textit{method of D-partitions} (refs.\ \`{E}l'sgol'ts and Norkin~\cite{Elsgolts--Norkin 1973}, Kolmanovski\u{i} and Nosov~\cite{Kolmanovskii--Nosov 1986}).
Basically, this is a method to obtain hyper-surfaces in the parameter space by considering the condition on the parameters under which a given transcendental equation has a root $\iu \Omega$ for some real number $\Omega$.
Here $\iu$ is the imaginary unit, and the real number $\Omega$ corresponds to the angular frequency of the corresponding periodic solutions (i.e., $T = 2\pi/{\abs{\Omega}}$ for the period $T > 0$ of the periodic solutions\footnote{When $\Omega = 0$, $T$ is interpreted as $\infty$. In this case, the periodic solution is constant.}).

In this paper, we will also compare the above mentioned another proof of Sakata's result based on the method by critical delay with the method of D-partitions.
A direct calculation will show that for each fixed $\Arg(w)$, $a$ and $\abs{w}$ are parametrized by the angular frequency $\Omega$.
Here we have two distinct points from the real $w$ case: (i) The property that $\iu\Omega$ is a root of Eq.~\eqref{eq: TE} does not necessarily imply that its complex conjugate $-\iu\Omega$ is a root of Eq.~\eqref{eq: TE}.
Therefore, it is insufficient to only consider the case $\Omega > 0$.
(ii) $\abs{w}$ should be kept positive.

The main ingredient in this paper for the study of the stability region of Eq.~\eqref{eq: TE} via the method of D-partitions is to connect the curves parametrized by angular frequency with the sequence composed of the $\tau$-values for which Eq.~\eqref{eq: TE} has purely imaginary roots.
These $\tau$-values are essentially obtained in \cite{Matsunaga 2008}, but there is an ambiguity because two sequences composed of $\tau$-values are given in \cite{Matsunaga 2008}.
The above mentioned connection gives a ``one-to-one correspondence'' between the curves and the $\tau$-values of the sequence.
Furthermore, it naturally produce an ``ordering'' of the curves via the ordering of the $\tau$-values.

This paper is organized as follows.
In Section~\ref{sec: method by critical delay}, we give a detailed explanation of the method by critical delay for Eq.~\eqref{eq: TE} with $a \in \R$ and $w \in \C$.
Here the domain of definition of critical delay function~\eqref{eq: critical delay function} is also determined.
In Section~\ref{sec: dependence of roots on delay parameter}, to give a general insight into the method by critical delay, we study the dependence of real parts of roots of Eq.~\eqref{eq: TE} on the delay parameter $\tau > 0$ for the case of $a \in \C$ and $w \in \C \setminus \{0\}$.
In Subsection~\ref{subsec: crossing direction of a purely imaginary root}, we examine the crossing direction of a purely imaginary root $\iu\Omega_0$ for some $\Omega_0 \in \R \setminus \{0\}$ with respect to the delay parameter $\tau > 0$.
Then the obtained result will reveal that its crossing direction is determined by the sign of
\begin{equation*}
	\Omega_0(\Omega_0 + \Im(a)),
\end{equation*}
where $\Im(a)$ is the imaginary part of $a$.
The above value is not necessarily positive if $\Im(a) \ne 0$.
Therefore, the imaginary $a$ case will be more complicated.
For this reason, we assume that $a$ is real in the later sections.
In Section~\ref{sec: purely imaginary roots}, we will find conditions on $\tau$ for which Eq.~\eqref{eq: TE} has a purely imaginary root to know what happens for general parameter values.
Sections~\ref{sec: method by critical delay for real a and w} and \ref{sec: method by critical delay for real a and imaginary w} are devoted to the investigation of the stability region of Eq.~\eqref{eq: TE} by the method by critical delay, where another proof of Hayes' and Sakata's results are given.
In Section~\ref{sec: comparison with D-partitions}, we discuss the comparison between the method by critical delay and the method of D-partitions.
In Section~\ref{sec: discussions}, we will discuss the imaginary $a$ case and the case of multiple delays to contribute to possible future researches.
Appendix~\ref{sec: Lambert W function} gives a proof of the key theorem in this paper via the Lambert $W$ function.

\subsection*{Notations}

Let $\iu$ denote the imaginary unit.
For a complex number $z$, its complex conjugate is denoted by $\bar{z}$.
When $z \ne 0$, let $\Arg(z)$ be the \textit{principal value of the argument}, i.e., $\Arg(z) \in (-\pi, \pi]$ and $z = {\abs{z}}\e^{\iu \Arg(z)}$.
Throughout this paper, we will use the following notation.

\begin{notation}
For each $a, w \in \C$, let $T(a, w)$ denote the set of all $\tau > 0$ for which all the roots of Eq.~\eqref{eq: TE}
\begin{equation*}
	z + a - w\e^{-\tau z} = 0
\end{equation*}
have negative real parts.
\end{notation}

When $w = 0$, we have
\begin{equation*}
	T(a, 0) =
	\begin{cases}
	(0, \infty) & (\Re(a) > 0), \\
	\emptyset & (\Re(a) \le 0)
	\end{cases}
\end{equation*}
because Eq.~\eqref{eq: TE} becomes $z + a = 0$.

\section{Method by critical delay for real $a$ and complex $w$}\label{sec: method by critical delay}

In this section, we study Eq.~\eqref{eq: TE}
\begin{equation*}
	z + a - w\e^{-\tau z} = 0
\end{equation*}
for given $a \in \R$, $w \in \C$, and $\tau > 0$.
The basics of our consideration is the following theorem.

\begin{theorem}[cf.\ \cite{Matsunaga 2008}]\label{thm: T(a, w), real a}
Suppose $a \in \R$ and $w \in \C$.
Then the following statements hold:
\begin{enumerate}[label=\textup{(\Roman*)}]
\item $T(a, w) = (0, \infty)$ if and only if $a \ge \abs{w}$ and $a > \Re(w)$.
\item $T(a, w)$ is a nonempty proper subset of $(0, \infty)$ if and only if
\begin{equation*}
	w \ne 0 \amd \Re(w) < a < \abs{w}.
\end{equation*}
In this case, $T(a, w) = (0, \tau_\mathrm{c}(a, w))$ holds, where $\tau_\mathrm{c}(a, w) > 0$ is expressed by \eqref{eq: tau_c(a, w), |Arg(w)|, arccos}
\begin{equation*}
	\tau_\mathrm{c}(a, w)
	= \frac{1}{\sqrt{\abs{w}^2 - a^2}} \left[ \abs{\Arg(w)} - \arccos{\left( \frac{a}{\abs{w}} \right)} \right]
\end{equation*}
or \eqref{eq: tau_c(a, w), |Arg(w)|, arccot}
\begin{equation*}
	\tau_\mathrm{c}(a, w)
	= \frac{1}{\sqrt{\abs{w}^2 - a^2}} \left[ \abs{\Arg(w)} - \arccot{\left( \frac{a}{\sqrt{\abs{w}^2 - a^2}} \right)} \right].
\end{equation*}
\item $T(a, w)$ is empty if and only if $a \le \Re(w)$.
\end{enumerate}
\end{theorem}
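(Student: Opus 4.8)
The plan is to track the roots of Eq.~\eqref{eq: TE} as $\tau$ varies and, for each $\tau > 0$, to count the number $N(\tau)$ of roots in the closed right half-plane $\{\Re z \ge 0\}$. I would reduce the three equivalences to the three one-directional implications obtained by assuming each of the hypotheses in (I)--(III). Observe that these three hypotheses partition the parameter set $\{(a,w) : a\in\R,\ w\in\C\}$ (if $a \le \Re(w)$ we are in (III); otherwise $a > \Re(w)$, and then $a \ge |w|$ gives (I) while $a < |w|$ gives (II), using $|w| \ge \Re(w)$), and the asserted conclusions ($T(a,w)$ equals $(0,\infty)$, is a nonempty proper subset, or is empty) are mutually exclusive and exhaustive. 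Hence proving each implication in one direction forces every ``if and only if'' by elimination.

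First I would record two structural facts. Any root with $\Re z \ge 0$ satisfies $|z+a| = |w|\,\e^{-\tau\Re z} \le |w|$, so all such roots lie in the disk $\{|z| \le |a|+|w|\}$; therefore $N(\tau)$ is finite and, by continuous dependence of the roots on $\tau$, can change only when a root crosses the imaginary axis. Moreover, as $\tau \downarrow 0$ a Rouch\'{e}/Hurwitz argument inside this disk shows that exactly one root stays bounded and tends to $w - a$ while all others leave the disk (hence have $\Re z < 0$); consequently $N(0^+) = 0$ when $\Re(w) < a$ and $N(0^+) \ge 1$ when $\Re(w) > a$.

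Next I would analyse the crossings. Setting $z = \iu\Omega$ in Eq.~\eqref{eq: TE} and taking moduli gives $\Omega^2 = |w|^2 - a^2$, so crossings require $|w| \ge |a|$ and occur only at $\Omega = \pm\sqrt{|w|^2 - a^2}$, with $\tau$-values determined by matching arguments, $\tau\Omega \equiv \Arg(w) - \Arg(a + \iu\Omega) \pmod{2\pi}$. The crossing direction comes from implicit differentiation: from $z + a = w\e^{-\tau z}$ one gets $\dot z = -z(z+a)/\bigl(1 + \tau(z+a)\bigr)$, and at $z = \iu\Omega$ a short computation yields $\Re\dot z = \Omega^2/\bigl[(1+\tau a)^2 + (\tau\Omega)^2\bigr]$, strictly positive for $\Omega \ne 0$ precisely because $a$ is real. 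Thus every crossing is transversal and from left to right, so $N(\tau)$ is nondecreasing and strictly increases at each crossing.

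The three implications then assemble cleanly. Under (I) we have $a \ge |w|$, and the only degenerate candidate $z=0$ (which needs $w=a$) is excluded by $a > \Re(w)$, so there are no crossings; with $N(0^+)=0$ this gives $N\equiv 0$ and $T=(0,\infty)$. Under (III), $a \le \Re(w)$ forces $N(0^+) \ge 1$ (at the boundary $a=\Re(w)$, either $z=0$ is a permanent root when $w=a$, or the transversal crossing occurs at $\tau=0^+$ when $w\notin\R$), and monotonicity of $N$ gives $N(\tau)\ge 1$ for all $\tau$, i.e.\ $T=\emptyset$. Under (II), $\Re(w)<a<|w|$ gives $N(0^+)=0$ together with genuine crossings; letting $\tau_\mathrm{c}$ be the first positive crossing, $N\equiv 0$ on $(0,\tau_\mathrm{c})$ and $N\ge 1$ for $\tau \ge \tau_\mathrm{c}$, so $T=(0,\tau_\mathrm{c})$. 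To identify $\tau_\mathrm{c}$ I would use the symmetry $T(a,w)=T(a,\bar w)$ to assume $\Im(w)\ge 0$, take $\Omega=+\sqrt{|w|^2-a^2}$ with $k=0$, and use $\Arg(a+\iu\Omega)=\arccos(a/|w|)$ to obtain \eqref{eq: tau_c(a, w), |Arg(w)|, arccos}; the inequality $\Re(w)<a$ gives $\arccos(a/|w|) < |\Arg(w)|$, hence $\tau_\mathrm{c}>0$, and the identity $\arccos(a/|w|)=\arccot\bigl(a/\sqrt{|w|^2-a^2}\bigr)$ yields \eqref{eq: tau_c(a, w), |Arg(w)|, arccot}. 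The main obstacle I anticipate is not the transversality computation (routine) but the two continuity/combinatorial points: making the $\tau\downarrow 0$ root behaviour rigorous, and verifying that the value singled out above is genuinely the minimum over both signs of $\Omega$ and all $k\in\Z$, so that $\tau_\mathrm{c}$ really is the first crossing.
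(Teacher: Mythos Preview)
Your argument is correct in outline, and the two gaps you flag are genuine but fillable; indeed the paper develops exactly the tools you need. However, your route is \emph{not} the one the paper takes to prove this theorem. The paper's proof is placed in Appendix~\ref{sec: Lambert W function} and proceeds via the Lambert $W$ function: writing the root set as $\frac{1}{\tau}W(\tau w\e^{\tau a}) - a$, one reduces the question to a threshold condition $\Re(z) < \sigma$ for $z \in W(\zeta)$, which is resolved algebraically (Theorem~\ref{thm: Lambert W function}) to give inequality~\eqref{eq: inequality on tau, real a}, namely $|\Arg(w)| > \arccos(a/|w|) + \tau\sqrt{|w|^2 - a^2}$. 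The trichotomy (I)--(III) is then read off directly from this explicit inequality without any root-tracking or crossing-direction analysis.

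Your approach is the classical Cooke--Grossman/Matsunaga one: continuity of roots plus transversality of imaginary-axis crossings, giving monotonicity of $N(\tau)$. The paper in fact develops all of this machinery---your $\Re\dot z = \Omega^2/[(1+\tau a)^2 + (\tau\Omega)^2]$ is Theorem~\ref{thm: derivative of purely imaginary root, imaginary a}, your $\tau\downarrow 0$ asymptotics are Theorem~3.2 (proved via the real branches $W_0$, $W_{-1}$), and the ordering of crossing $\tau$-values that resolves your second obstacle is Lemmas~\ref{lem: ordering, real a, 0 < Arg(w) < pi} and~\ref{lem: ordering, real a, -pi < Arg(w) < 0}---but it deploys this machinery only later, for the D-partition comparison in Section~\ref{sec: comparison with D-partitions}, not for the proof of Theorem~\ref{thm: T(a, w), real a} itself. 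The trade-off: the Lambert $W$ route is shorter and yields the explicit $\tau$-inequality in one stroke, from which both the critical delay formula and its positivity are immediate; your route is more self-contained and dynamical, and makes the monotonicity of $N(\tau)$ (hence the interval structure of $T(a,w)$) transparent, at the cost of the two book-keeping steps you identified.
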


\begin{remark}\label{rmk: absolute stability}
The condition $a \ge \abs{w}$ and $a > \Re(w)$ in (I) is equivalent to
\begin{equation*}
	a \in
	\begin{cases}
	(\abs{w}, \infty) & (\Arg(w) = 0), \\
	\bigl[ \abs{w}, \infty \bigr) & (\text{otherwise}).
	\end{cases}
\end{equation*}
Therefore, it is also equivalent to $a \ge \abs{w}$ and $a \ne w$.
\end{remark}

Theorem~\ref{thm: T(a, w), real a} except \eqref{eq: tau_c(a, w), |Arg(w)|, arccot} is obtained by Matsunaga~\cite[Theorem 2]{Matsunaga 2008} for the imaginary $w$ case with a different expression of the critical delay.
See the latter discussion for the comparison.
We note that expression~\eqref{eq: tau_c(a, w), |Arg(w)|, arccot} is obtained by using an identity
\begin{equation*}
	\arccos(x) = \arccot{\left( \frac{x}{\sqrt{1 - x^2}} \right)}
	\mspace{20mu} (x \in (-1, 1)),
\end{equation*}
which is obtained from
\begin{equation*}
	\cot(\arccos x) = \frac{x}{\sqrt{1 - x^2}}
	\mspace{20mu} (x \in (-1, 1)).
\end{equation*}

We will call the subsets corresponding to the cases (I), (II), and (III) in Theorem~\ref{thm: T(a, w), real a} the \textit{delay-independent stability region}, \textit{delay-dependent stability or instability region}, and \textit{delay-independent instability region}, respectively.
See Fig.~\ref{fig: decomposition of (a, |w|)-plane by critical delay} for an illustration of the each regions in $(a, \abs{w})$-plane obtained by Theorem~\ref{thm: T(a, w), real a} when $\Arg(w) = 0$ and $\Arg(w) = 3\pi/4$, respectively.
Here we are considering $\Arg(w)$ as a parameter.

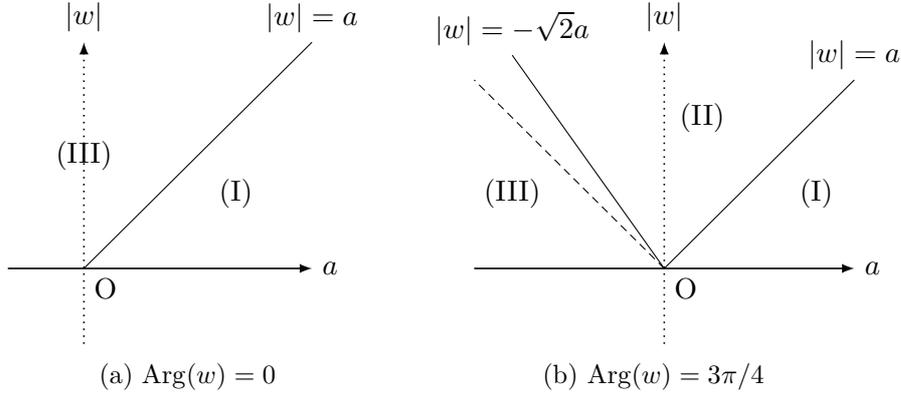
\begin{figure}[tbp]
\centering
	\begin{subfigure}{0.4\columnwidth}
	\centering
	\begin{tikzpicture}
		\draw[-latex, semithick] (-1,0) -- (3,0) node[right]{$a$}; 
		\draw[-latex, semithick, dotted] (0,-1) -- (0,3) node[above]{$\abs{w}$}; 
		\draw (0,0) node[below right]{O}; 
		\draw[domain=0:3, variable=\a] plot(\a,\a) node[above]{$\abs{w} = a$};
		\draw (2,1) node{(I)};
		\draw (0,1.5) node{(III)};
	\end{tikzpicture}
	\caption{$\Arg(w) = 0$}
	\end{subfigure}
	\begin{subfigure}{0.4\columnwidth}
	\centering
	\begin{tikzpicture}
		\draw[-latex, semithick] (-2.5,0) -- (2.5,0) node[right]{$a$}; 
		\draw[-latex, semithick, dotted] (0,-1) -- (0,3) node[above]{$\abs{w}$}; 
		\draw (0,0) node[below right]{O}; 
		\draw[domain=0:2.5, variable=\a] plot(\a,\a) node[above]{$\abs{w} = a$};
		\draw[domain=0:-2, variable=\a] plot(\a,{-sqrt(2)*\a}) node[above]{$|w| = -\sqrt{2}a$};
		\draw[domain=0:-2.5, variable=\a, densely dashed] plot(\a,-\a); 
		\draw (2,1) node{(I)};
		\draw (0.5,2) node{(II)};
		\draw (-2,1) node{(III)};
	\end{tikzpicture}
	\caption{$\Arg(w) = 3\pi/4$}
	\end{subfigure}
\caption{Decompositions of $(a, \abs{w})$-plane obtained by Theorem~\ref{thm: T(a, w), real a} for the cases $\Arg(w) = 0$ and $\Arg(w) = 3\pi/4$. The dashed line in (b) expresses the graph of $\abs{w} = -a$.}
\label{fig: decomposition of (a, |w|)-plane by critical delay}
\end{figure}

\subsection{Comparison with Matsunaga's expression of critical delay}

In \cite{Matsunaga 2008}, the critical delay is given by
\begin{equation}\label{eq: Matsunaga's critical delay}
	\frac{\sgn(b)}{\sqrt{b^2 - a^2}} \left[ \arccos{\left(-\frac{a}{b} \right)} - \abs{\theta} \right],
\end{equation}
where the parameter $w$ corresponds to $-b\e^{\iu\theta}$ with $b \in \R \setminus \{0\}$ and $-\pi/2 \le \theta \le \pi/2$, and $\sgn(b) \coloneqq b/{\abs{b}}$.
The value~\eqref{eq: Matsunaga's critical delay} is indeed equal to the right-hand side of \eqref{eq: tau_c(a, w), |Arg(w)|, arccos} as the following lemma shows.

\begin{lemma}\label{lem: critical delay in Matsunaga}
Let $w = -b\e^{\iu\theta}$ for $b \in \R \setminus \{0\}$ and $\theta \in [-\pi/2, \pi/2]$.
Then
\begin{equation*}
	\frac{\sgn(b)}{\sqrt{b^2 - a^2}} \left[ \arccos{\left(-\frac{a}{b} \right)} - \abs{\theta} \right]
	= \frac{1}{\sqrt{\abs{w}^2 - a^2}} \left[ \abs{\Arg(w)} - \arccos{\left( \frac{a}{\abs{w}} \right)} \right]
\end{equation*}
holds.
\end{lemma}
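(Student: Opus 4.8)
The plan is to cancel the common radical prefactor and then verify the resulting identity between the two bracketed expressions by splitting into the cases $b > 0$ and $b < 0$. First I would record that $\abs{w} = \abs{-b\e^{\iu\theta}} = \abs{b}$, since $\abs{\e^{\iu\theta}} = 1$. Consequently $\sqrt{\abs{w}^2 - a^2} = \sqrt{b^2 - a^2}$, so the two prefactors coincide and the claimed equality is equivalent to
\[
	\sgn(b)\left[ \arccos\left( -\frac{a}{b} \right) - \abs{\theta} \right]
	= \abs{\Arg(w)} - \arccos\left( \frac{a}{\abs{w}} \right).
\]
The work then reduces to determining $\Arg(w)$ in terms of $\theta$ and to invoking the reflection identity $\arccos(-x) = \pi - \arccos(x)$ for $x \in [-1, 1]$.

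The case $b < 0$ is immediate: then $-b = \abs{b} = \abs{w} > 0$, so $w = \abs{w}\e^{\iu\theta}$ with $\theta \in [-\pi/2, \pi/2] \subset (-\pi, \pi]$, whence $\Arg(w) = \theta$ and $\abs{\Arg(w)} = \abs{\theta}$; moreover $-a/b = a/\abs{w}$ and $\sgn(b) = -1$, so both sides of the reduced identity equal $\abs{\theta} - \arccos(a/\abs{w})$. In the case $b > 0$ we have $\sgn(b) = 1$ and $-a/b = -a/\abs{w}$, so the left-hand side of the reduced identity is $\arccos(-a/\abs{w}) - \abs{\theta}$; applying the reflection identity rewrites this as $\pi - \abs{\theta} - \arccos(a/\abs{w})$, which matches the right-hand side precisely when $\abs{\Arg(w)} = \pi - \abs{\theta}$.

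The one genuinely delicate point, and the step I expect to be the main obstacle, is establishing $\abs{\Arg(w)} = \pi - \abs{\theta}$ in the case $b > 0$, because here $w = -b\e^{\iu\theta}$ lies near the negative real axis and its argument straddles the branch cut of the principal value $\Arg(\argdot) \in (-\pi, \pi]$. Writing $w = \abs{w}\e^{\iu(\theta + \pi)}$ with $\theta + \pi \in [\pi/2, 3\pi/2]$, I would split according to the sign of $\theta$: for $\theta \le 0$ the angle $\theta + \pi$ already lies in $(-\pi, \pi]$, so $\Arg(w) = \theta + \pi$ and $\abs{\Arg(w)} = \pi + \theta = \pi - \abs{\theta}$; for $\theta > 0$ one must subtract $2\pi$ to land in the principal range, giving $\Arg(w) = \theta - \pi$ and $\abs{\Arg(w)} = \pi - \theta = \pi - \abs{\theta}$. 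Either way the required relation holds, which completes the verification. Apart from this branch-cut bookkeeping the argument is entirely routine.
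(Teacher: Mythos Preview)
Your proposal is correct and follows essentially the same route as the paper: both split into the cases $b>0$ and $b<0$, handle $b<0$ by the direct substitution $\Arg(w)=\theta$, and in the case $b>0$ further split on the sign of $\theta$ to pin down $\Arg(w)\in\{\theta+\pi,\theta-\pi\}$ before invoking the reflection identity $\arccos(-x)+\arccos(x)=\pi$. The only cosmetic difference is that you cancel the common radical prefactor up front and phrase the branch-cut computation as proving $\lvert\Arg(w)\rvert=\pi-\lvert\theta\rvert$, whereas the paper computes $-\lvert\theta\rvert=\lvert\Arg(w)\rvert-\pi$; these are the same calculation.
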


\begin{proof}
The proof is divided into the following two cases.
\begin{itemize}[leftmargin=*]
\item Case 1: $b > 0$.
Since ${\abs{w}}\e^{\iu \Arg(w)} = b\e^{\iu(\theta + \pi)}$, we have
\begin{equation*}
	\abs{w} = b \amd
	\Arg(w) =
	\begin{cases}
		\theta - \pi & (0 < \theta \le \pi/2), \\
		\theta + \pi & (-\pi/2 \le \theta \le 0),
	\end{cases}
\end{equation*}
where $\Arg(w) < 0$ for $0 < \theta \le \pi/2$ and $\Arg(w) > 0$ for $-\pi/2 \le \theta \le 0$.
Therefore,
\begin{align*}
	-{\abs{\theta}}
	&=
	\begin{cases}
		-\Arg(w) - \pi & (0 < \theta \le \pi/2), \\
		\Arg(w) - \pi & (-\pi/2 \le \theta \le 0)
	\end{cases} \\
	&= \abs{\Arg(w)} - \pi.
\end{align*}
By using this and an identity
\begin{equation}\label{eq: arccos(x) + arccos(-x) = pi}
	\arccos(x) + \arccos(-x) = \pi
	\mspace{20mu}
	(x \in [-1, 1])
\end{equation}
we obtain
\begin{equation*}
	\arccos{\left(-\frac{a}{b} \right)} - \abs{\theta}
	= \abs{\Arg(w)} - \arccos{\left( \frac{a}{\abs{w}} \right)}.
\end{equation*}
\item Case 2: $b < 0$.
Since $\abs{w} = -b$ and $\Arg(w) = \theta$, we have
\begin{equation*}
	\arccos{\left(-\frac{a}{b} \right)} - \abs{\theta}
	= - {\left[ \abs{\Arg(w)} - \arccos{\left( \frac{a}{\abs{w}} \right)} \right]}.
\end{equation*}
\end{itemize}
This completes the proof.
\end{proof}

\subsection{Critical delay and its domain of definition for real \texorpdfstring{$a$}{a} and \texorpdfstring{$w$}{w}}

The following result is a direct consequence of Theorem~\ref{thm: T(a, w), real a}.
Therefore, the proof can be omitted.
See \cite[Theorem 2.3]{Boese 1989} for the result with $a = 1$ and $w \le 0$.
See \cite[Theorem]{Boese 1993} for the result with general $a, w \in \R$.

\begin{theorem}[\cite{Boese 1993}, cf.\ \cite{Cooke--Grossman 1982}, \cite{Boese 1989}]\label{thm: T(a, w), real a and w}
Suppose $a, w \in \R$.
Then the following statements hold:
\begin{enumerate}[label=\textup{(\Roman*)}]
\item $T(a, w) = (0, \infty)$ if and only if $a > 0$ and $-a \le w < a$.
\item $T(a, w)$ is a nonempty proper subset of $(0, \infty)$ if and only if
\begin{equation*}
	w < -{\abs{a}}.
\end{equation*}
In this case, $T(a, w) = (0, \tau_\mathrm{c}(a, w))$ holds, where $\tau_\mathrm{c}(a, w) > 0$ is expressed by
\begin{equation*}
	\tau_\mathrm{c}(a, w)
	= \frac{1}{\sqrt{w^2 - a^2}} \arccos{\left( \frac{a}{w} \right)}
	= \frac{1}{\sqrt{w^2 - a^2}} \arccot{\left( -\frac{a}{\sqrt{w^2 - a^2}} \right)}.
\end{equation*}
\item $T(a, w)$ is empty if and only if $w \ge a$.
\end{enumerate}
\end{theorem}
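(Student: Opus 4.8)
The plan is to obtain the statement as a specialization of Theorem~\ref{thm: T(a, w), real a} to real $w$, where $\Re(w) = w$ and every condition involving the complex structure of $w$ collapses to an inequality between real numbers. Throughout I would use the elementary bounds $-{\abs{a}} \le a \le {\abs{a}}$ together with the two reflection identities $\arccos(x) + \arccos(-x) = \pi$ and $\arccot(x) + \arccot(-x) = \pi$; the first is \eqref{eq: arccos(x) + arccos(-x) = pi}, and the second follows at once from $\cot(\pi - \theta) = -\cot\theta$ together with $\pi - \theta \in (0, \pi)$ for $\theta \in (0, \pi)$.

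For (I) and (III) the translation is immediate. Setting $\Re(w) = w$ in the condition of Theorem~\ref{thm: T(a, w), real a}(I), the inequality $a \ge {\abs{w}}$ is equivalent to $-a \le w \le a$ (and forces $a \ge 0$), while $a > \Re(w) = w$ deletes the right endpoint; the value $a = 0$ is excluded because it would force $w = 0$ and then violate $a > \Re(w)$, so $a > 0$ and $-a \le w < a$, as claimed. Dually, the condition of Theorem~\ref{thm: T(a, w), real a}(III) reads $a \le \Re(w) = w$, i.e.\ $w \ge a$, which is exactly (III).

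The substance is (II), and the first step is to prove that ``$w \ne 0$ and $\Re(w) < a < {\abs{w}}$'' is equivalent to ``$w < -{\abs{a}}$''. For the forward direction, $w < a < {\abs{w}}$ gives $w < {\abs{w}}$, which for real $w$ forces $w < 0$ and hence ${\abs{w}} = -w$; then $w < a < -w$ unpacks as $-w > a$ and $-w > -a$, so $-w > {\abs{a}}$. Conversely, $w < -{\abs{a}} \le 0$ yields $w < 0$, ${\abs{w}} = -w > {\abs{a}} \ge a$, and $w < -{\abs{a}} \le a$, which are precisely $w \ne 0$ and $\Re(w) < a < {\abs{w}}$.

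It then remains to specialize the critical-delay formula. Because $w < 0$ we have $\Arg(w) = \pi$, so ${\abs{\Arg(w)}} = \pi$, while ${\abs{w}}^2 = w^2$ and $a/{\abs{w}} = -a/w$. Substituting into \eqref{eq: tau_c(a, w), |Arg(w)|, arccos} gives $\tau_\mathrm{c}(a, w) = (w^2 - a^2)^{-1/2}\bigl[\pi - \arccos(-a/w)\bigr]$, and \eqref{eq: arccos(x) + arccos(-x) = pi} rewrites $\pi - \arccos(-a/w)$ as $\arccos(a/w)$, producing the first displayed expression; substituting instead into \eqref{eq: tau_c(a, w), |Arg(w)|, arccot} and applying $\arccot(x) + \arccot(-x) = \pi$ produces the second. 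The only genuinely delicate point is this sign deduction $w < 0$ in case (II): it is what legitimizes $\Arg(w) = \pi$ and removes the absolute values, and it expresses the real-coefficient feature that the strict chain $\Re(w) < a < {\abs{w}}$ can be realized only on the negative real axis; everything else is routine real-variable bookkeeping together with the two reflection identities.
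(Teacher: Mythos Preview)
Your proof is correct and follows exactly the route the paper takes: the paper states that Theorem~\ref{thm: T(a, w), real a and w} is a direct consequence of Theorem~\ref{thm: T(a, w), real a} and omits the proof, noting only that the displayed expressions for $\tau_\mathrm{c}(a, w)$ are obtained from the identities~\eqref{eq: arccos(x) + arccos(-x) = pi} and~\eqref{eq: arccot(x) + arccot(-x) = pi} since $w < 0$. You have simply written out the details of that specialization.
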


See Fig.~\ref{fig: decomposition of (a, w)-plane by critical delay} for the picture of $(a, w)$-plane decomposed by the nature of $T(a, w)$ given in Theorem~\ref{thm: T(a, w), real a and w}.
It is a special case of Fig.~\ref{fig: decomposition of (a, |w|)-plane by critical delay}.
The above subset
\begin{equation*}
	\Set{(a, w) \in \R^2}{w < -{\abs{a}}}
\end{equation*}
is the domain of definition of critical delay function~\eqref{eq: critical delay function} $(a, w) \mapsto \tau_\mathrm{c}(a, w) \in (0, \infty)$ when $w$ varies in the real number line.
We note that the expressions of $\tau_\mathrm{c}(a, w)$ above are obtained by identities~\eqref{eq: arccos(x) + arccos(-x) = pi} and 
\begin{equation}\label{eq: arccot(x) + arccot(-x) = pi}
	\arccot(x) + \arccot(-x) = \pi
	\mspace{20mu}
	(x \in \R)
\end{equation}
since $w < 0$.

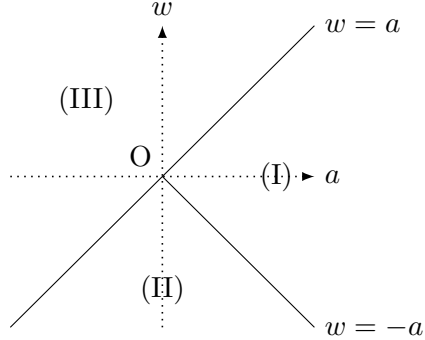
\begin{figure}[tbp]
\centering
\begin{tikzpicture}
	\draw[-latex, semithick, dotted] (-2,0) -- (2,0) node[right]{$a$}; 
	\draw[-latex, semithick, dotted] (0,-2) -- (0,2) node[above]{$w$}; 
	\draw (0,0) node[above left]{O}; 
	\draw[domain=-2:2, variable=\a] plot(\a,\a) node[right]{$w = a$};
	\draw[domain=0:2, variable=\a] plot(\a,-\a) node[right]{$w = -a$};
	\draw (1.5,0) node{(I)};
	\draw (0,-1.5) node{(II)};
	\draw (-1,1) node{(III)};
\end{tikzpicture}
\caption{Decomposition of $(a, w)$-plane obtained by Theorem~\ref{thm: T(a, w), real a and w}}
\label{fig: decomposition of (a, w)-plane by critical delay}
\end{figure}

\begin{remark}
When $a > w$ and Eq.~\eqref{eq: TE} is not asymptotically stable independent of delay, Cooke and Grossman~\cite[Section 2]{Cooke--Grossman 1982} discussed the existence of the critical delay value $\tau_0 > 0$ satisfying the following properties: (1) For all $0 < \tau < \tau_0$, all the roots have negative real parts, (2) For all $\tau > \tau_0$, there exists a root whose real part is positive.
However, the explicit expression of $\tau_0$ is not given in \cite{Cooke--Grossman 1982}.
\end{remark}

\subsection{Method by critical delay and stability region}

For the sake of clarity, we introduce the following notation.

\begin{notation}
Let
\begin{equation*}
	D_\mathrm{c}
	\coloneqq \Set{(a, w) \in \R \times (\C \setminus \{0\})}{\Re(w) < a < \abs{w}},
\end{equation*}
be a linear cone which is the domain of definition of critical delay function~\eqref{eq: critical delay function}.
It corresponds to the delay-dependent stability or instability region (II).
\end{notation}

Here a nonempty subset $C$ (of a linear topological space over $\R$) is called a \textit{linear cone} if for every $v \in C$ and every $s > 0$, $sv \in C$ holds.

Theorem~\ref{thm: T(a, w), real a} shows that all the roots of Eq.~\eqref{eq: TE} have negative real parts if and only if $a$ and $w$ satisfy one of the following properties:
\begin{enumerate}[label=\textup{(\Roman*)}]
\item $a \ge \abs{w}$ and $a > \Re(w)$.
\item $(a, w) \in D_\mathrm{c}$ and inequality~\eqref{eq: inequality on critical delay} $\tau_\mathrm{c}(a, w) > \tau$ holds.
\end{enumerate}

From expressions~\eqref{eq: tau_c(a, w), |Arg(w)|, arccos} or \eqref{eq: tau_c(a, w), |Arg(w)|, arccot} of the critical delay, we have
\begin{equation}\label{eq: tau_c(a tau, w tau)}
	\tau_\mathrm{c}(sa, sw) = \frac{1}{s}{\tau_\mathrm{c}(a, w)}
\end{equation}
for every $s > 0$ and every $(a, w) \in D_\mathrm{c}$.
Therefore, for each fixed $\tau > 0$, inequality~\eqref{eq: inequality on critical delay} is equivalent to $\tau_\mathrm{c}(\tau a, \tau w) > 1$.
This means that all the roots of Eq.~\eqref{eq: TE} have negative real parts if and only if $a$ and $w$ satisfy one of the following properties:
\begin{enumerate}[label=\textup{(\Roman*)}]
\item $a \ge \abs{w}$ and $a > \Re(w)$.
\item $(a, w) \in D_\mathrm{c}$ and an inequality $\tau_\mathrm{c}(\tau a, \tau w) > 1$ holds.
\end{enumerate}
We call the above method to find the stability region the \textit{method by critical delay}.

\section{Dependence of roots with respect to delay parameter}\label{sec: dependence of roots on delay parameter}

Let $a \in \C$ and $w \in \C \setminus \{0\}$ be given.
In this section, we study the dependence of zeros of the function $h(\argdot; a, w, s) \colon \C \to \C$ defined by
\begin{equation}\label{eq: h(z; a, w, s)}
	h(z; a, w, s)
	\coloneqq
	z + a - w\e^{-sz}
\end{equation}
on the real parameter $s \in \R$.

\subsection{Argument by Rouch\'{e}'s theorem}

For each fixed $s_0 \in \R$, we have
\begin{align*}
	\abs{h(z; a, w, s) - h(z; a, w, s_0)}
	&= \abs{w}\abs{\e^{-sz} - \e^{-s_0z}} \\
	&= \abs{w}\abs{\e^{-s_0z}}\abs{\e^{-(s - s_0)z} - 1}
\end{align*}
for all $z \in \C$ and all $s \in \R$.
Let $D$ be a given open disk of the complex plane $\C$ so that $h(\argdot; a, w, s_0)$ does not have any zeros on the boundary $\partial D$ of $D$.
We note that such an open disk can be always chosen because $h(\argdot; a, w, s_0)$ is not constant.
We have
\begin{equation*}
	\sup_{z \in \partial D} \abs{\e^{-s_0z}} \le M
\end{equation*}
for some $M > 0$.
Furthermore, the term $\abs{\e^{-(s - s_0)z} - 1}$ converges to $0$ as $s \to s_0$ uniformly in $z \in \partial D$.
This shows that
\begin{equation*}
	\sup_{z \in \partial D} \abs{h(z; a, w, s) - h(z; a, w, s_0)}
	< \inf_{z \in \partial D} \abs{h(z; a, w, s_0)}
\end{equation*}
holds for all sufficiently small $\abs{s - s_0}$ because $h(\argdot; a, w, s_0)$ has no zeros on $\partial D$.
Therefore, by applying Rouch\'{e}'s theorem, there exists a $\delta > 0$ such that for all $s \in (s_0 - \delta, s_0 + \delta)$, $h(\argdot; a, w, s)$ and $h(\argdot; a, w, s_0)$ have the same number of zeros on $D$ counted with multiplicity.

By combining the above argument and Theorem~\ref{thm: T(a, w), real a}, the following lemma is obtained.

\begin{lemma}
Let $a \in \R$ and $w \in \C \setminus \{0\}$ be fixed so that $(a, w) \in D_\mathrm{c}$.
Then Eq.~\eqref{eq: TE} with $\tau = \tau_\mathrm{c}(a, w)$ does not have any roots with positive real parts and has a root on the imaginary axis.
\end{lemma}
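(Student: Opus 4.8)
The plan is to combine the continuous dependence of the zero count furnished by the Rouch\'{e} argument above with the description $T(a,w) = (0, \tau_\mathrm{c}(a,w))$ supplied by Theorem~\ref{thm: T(a, w), real a}(II), treating the two assertions separately; throughout write $\tau_\mathrm{c} \coloneqq \tau_\mathrm{c}(a,w)$. First I would show that Eq.~\eqref{eq: TE} with $\tau = \tau_\mathrm{c}$ has no root with positive real part, arguing by contradiction. Suppose $z_0$ were such a root, so $\Re(z_0) > 0$. Since $z_0$ is an isolated zero of the nonconstant entire function $h(\argdot; a, w, \tau_\mathrm{c})$, I can choose an open disk $D$ centred at $z_0$, contained in the open right half-plane, carrying no zero of $h(\argdot; a, w, \tau_\mathrm{c})$ on its boundary $\partial D$. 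The Rouch\'{e} argument then yields $\delta > 0$ such that, for every $\tau \in (\tau_\mathrm{c} - \delta, \tau_\mathrm{c} + \delta)$, the function $h(\argdot; a, w, \tau)$ has the same (hence positive) number of zeros in $D$ as $h(\argdot; a, w, \tau_\mathrm{c})$. In particular every $\tau \in (\tau_\mathrm{c} - \delta, \tau_\mathrm{c})$ would produce a root of Eq.~\eqref{eq: TE} inside $D$, hence with positive real part, contradicting $(0, \tau_\mathrm{c}) = T(a,w)$, which forces all roots to have negative real parts for such $\tau$.

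To exhibit a root on the imaginary axis I would argue as follows. Since Theorem~\ref{thm: T(a, w), real a}(II) gives $T(a,w)$ as the \emph{open} interval $(0, \tau_\mathrm{c})$, the value $\tau_\mathrm{c}$ itself does not belong to $T(a,w)$; by the definition of $T(a,w)$ this means that not all roots of Eq.~\eqref{eq: TE} at $\tau = \tau_\mathrm{c}$ have negative real parts, so there is a root $z_*$ with $\Re(z_*) \ge 0$. Combined with the first part, which rules out $\Re(z_*) > 0$, this forces $\Re(z_*) = 0$, so $z_*$ lies on the imaginary axis. One checks moreover that $z_* \ne 0$, since a root at the origin would require $a = w$, incompatible with $\Re(w) < a$ on $D_\mathrm{c}$; thus in fact $z_* = \iu\Omega_0$ for some $\Omega_0 \in \R \setminus \{0\}$.

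I expect the only genuine work to lie in the first part, namely the exclusion of roots with positive real part at the critical delay: its content is precisely that a root cannot appear in the open right half-plane discontinuously as $\tau$ decreases through $\tau_\mathrm{c}$, and this is exactly what the local constancy of the zero count from Rouch\'{e}'s theorem supplies. An alternative route to the second part, not relying on $\tau_\mathrm{c} \notin T(a,w)$, would instead take a sequence $\tau_n \downarrow \tau_\mathrm{c}$ with associated roots $z_n$ satisfying $\Re(z_n) \ge 0$; rewriting Eq.~\eqref{eq: TE} as $z_n + a = w\e^{-\tau_n z_n}$ and using $\abs{\e^{-\tau_n z_n}} = \e^{-\tau_n \Re(z_n)} \le 1$ yields the a priori bound $\abs{z_n} \le \abs{a} + \abs{w}$, whence a subsequence converges to a root of Eq.~\eqref{eq: TE} at $\tau_\mathrm{c}$ with nonnegative real part, again pinned to the axis by the first part. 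Here the a priori bound is the point that needs care, as it is what prevents a root from escaping to infinity.
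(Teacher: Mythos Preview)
Your argument is correct and tracks the paper's approach: the Rouch\'{e} continuity argument combined with Theorem~\ref{thm: T(a, w), real a}(II) handles the exclusion of positive real parts exactly as in the paper, and for the existence of an imaginary-axis root you in fact present both routes. Your primary route---reading $\tau_\mathrm{c} \notin T(a,w)$ directly off the open interval in Theorem~\ref{thm: T(a, w), real a}(II) and combining with the first part---is a slight shortcut the paper does not spell out; the paper instead relies on the a~priori bound $\abs{z} \le \abs{a} + \abs{w}\e^{-s\Re(z)}$ (precisely your ``alternative route'') to confine roots with nonnegative real part to a bounded region and then invokes the Rouch\'{e} argument once more to force a root onto the axis at $\tau_\mathrm{c}$.
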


\begin{proof}
By combining the above argument and Theorem~\ref{thm: T(a, w), real a}, it holds that Eq.~\eqref{eq: TE} with $\tau = \tau_\mathrm{c}(a, w)$ does not have any roots with positive real parts.
To show the existence of a root on the imaginary axis, we use the following estimate: For every zero $z$ of $h(\argdot; a, w, s)$,
\begin{equation}\label{eq: modulus of zeros of h(argdot; a, w, s)}
	\abs{z}
	= \abs{-a + w\e^{-sz}}
	\le \abs{a} + {\abs{w}}\e^{-s\Re(z)}.
\end{equation}
We now suppose $s$ is nonnegative.
From \eqref{eq: modulus of zeros of h(argdot; a, w, s)}, all the zeros of $h(\argdot; a, w, s) = 0$ whose real parts are greater than or equal to $\sigma \in \R$ should be located in the closed disk with center at $0$ and radius
\begin{equation*}
	\abs{a} + {\abs{w}}\e^{-s\sigma}.
\end{equation*}
This shows that the number of such zeros is finite counted with multiplicity.
Therefore, the combination of the argument by Rouch\'{e}'s theorem and Theorem~\ref{thm: T(a, w), real a} also shows that Eq.~\eqref{eq: TE} with $\tau = \tau_\mathrm{c}(a, w)$ has a root on the imaginary axis.
\end{proof}

\subsection{Asymptotics of real parts of roots as delay tends to zero}\label{subsec: asymptotics of real parts of roots}

Inequality~\eqref{eq: modulus of zeros of h(argdot; a, w, s)} also holds when $a$ is complex and shows that the real part $\Re(z)$ satisfies
\begin{equation}\label{eq: real parts of zeros of h(argdot; a, w, s)}
	\abs{\Re(z)}
	\le \abs{a} + {\abs{w}}\e^{-s\Re(z)}
\end{equation}
because of $\abs{\Re(z)} \le \abs{z}$.
To analyze inequality~\eqref{eq: real parts of zeros of h(argdot; a, w, s)}, we will use the real branches of the Lambert $W$ function.
Here the \textit{Lambert $W$ function} is the inverse function of a complex function $\C \ni z \mapsto z\e^z \in \C$ in the sense of set-valued function, i.e., $W(\zeta)$ is defined by
\begin{equation*}
	W(\zeta) \coloneqq \Set{z \in \C}{z\e^z = \zeta}
\end{equation*}
for any $\zeta \in \C$.
We refer the reader to Corless et al.~\cite{Corless--Gonnet--Hare--Jeffrey--Knuth 1996} as a review paper on the Lambert $W$ function.
See \cite{Brito--Fabiao--Staubyn 2008} for a short survey of the Lambert $W$ function.
For a discussion on the naming of the Lambert $W$ function, e.g., see \cite{Hayes 2005}.
There is an attempt to generalize the Lambert $W$ function (see \cite{Mexo--Baricz 2017}).

The Lambert $W$ function is strongly related to Eq.~\eqref{eq: TE}.
Indeed, the set of all roots of Eq.~\eqref{eq: TE} is equal to
\begin{equation}\label{eq: set of all roots of z + a - we^{-tau z} = 0}
	\frac{1}{\tau} W(\tau w\e^{\tau a}) - a
	\coloneqq \Set{z - a}{z \in \frac{1}{\tau}W(\tau w\e^{\tau a})}
\end{equation}
because Eq.~\eqref{eq: TE} can be transformed into
\begin{equation*}
	\tau(z + a) \cdot \e^{\tau(z + a)} = \tau w\e^{\tau a}.
\end{equation*}
We note that it is common to use the Lambert $W$ function for numerical investigations of the location of the roots of Eq.~\eqref{eq: TE} (e.g., see \cite{Asl--Ulsoy 2003}, \cite{Hovel--Scholl 2005}, \cite{Hwang--Cheng 2005}, \cite{Shinozaki--Mori 2006}, \cite{Fiedler--Flunkert--Georgi--Hovel--Scholl 2007}, \cite{Wang--Hu 2008}, \cite{Yi--Yu--Kim 2011}, \cite{Bortz 2016}, and \cite{Surya--Vyasarayani--Kalmar-Nagy 2018}).

As is mentioned above, we use the real branches of the Lambert $W$ function for the study of the asymptotics of real parts of roots of Eq.~\eqref{eq: TE} as $\tau \downarrow 0$.
Since the function
\begin{equation*}
	\R \ni x \mapsto x\e^x \in \R
\end{equation*}
is strictly monotonically increasing on $[-1, \infty)$ and strictly monotonically decreasing on $(-\infty, -1]$, the Lambert $W$ function has two real branches $W_0 \colon [-1/\e, \infty) \to \R$ and $W_{-1} \colon [-1/\e, 0) \to \R$.
These graphs are depicted in Fig.~\ref{fig: real branches of Lambert W function}.

\begin{figure}[tbp]
\centering
\begin{tikzpicture}[smooth, samples=100]
	\draw[-latex, semithick] (-2,0) -- (3,0) node[right]{$y$}; 
	\draw[-latex, semithick] (0,-3) -- (0,2) node[above]{$x$}; 
	\draw (0,0) node[below right]{O}; 
	\draw (-1/e,-1) node[left]{$(-\frac{1}{\e},-1)$};
	\filldraw (-1/e,-1) circle [radius=1.2pt];
	\draw[domain=-1:1.05] plot({\x*exp(\x)},\x) node[above]{$x = W_0(y)$}; 
	\draw[domain=-1:-3, densely dashed] plot({\x*exp(\x)},\x) node[below]{$x = W_{-1}(y)$}; 
\end{tikzpicture}
\caption{Real branches $W_0$ (solid) and $W_{-1}$ (dashed) of the Lambert $W$ function}
\label{fig: real branches of Lambert W function}
\end{figure}
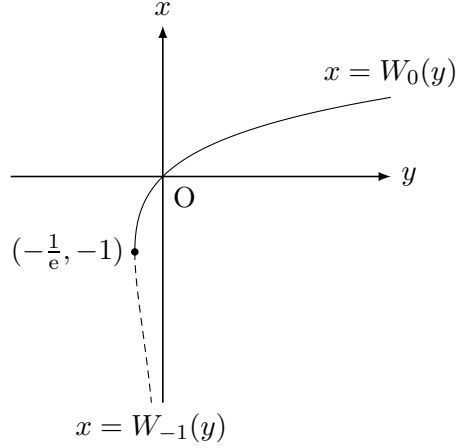

The following is the analytic result showing the asymptotics of real parts of roots.

\begin{theorem}
Let $a \in \C$ and $w \in \C \setminus \{0\}$ be fixed.
For each sufficiently small $\tau > 0$, let $\Sigma_0^+(\tau)$, $\Sigma_0^-(\tau)$, and $\Sigma_{-1}(\tau)$ be positive numbers given by
\begin{align*}
	\Sigma_0^+(\tau)
	&\coloneqq \frac{1}{\tau} W_0{\bigl( \tau{\abs{w}}\e^{-\tau\abs{a}} \bigr)} + \abs{a}, \\
	\Sigma_0^-(\tau)
	&\coloneqq \frac{1}{\tau} W_0{\bigl( -\tau{\abs{w}}\e^{\tau\abs{a}} \bigr)} - \abs{a},
\intertext{and}
	\Sigma_{-1}(\tau)
	&\coloneqq \frac{1}{\tau} W_{-1}{\bigl( -\tau{\abs{w}}\e^{\tau\abs{a}} \bigr)} - \abs{a}.
\end{align*}
Then for every sufficiently small $\tau > 0$, any root of Eq.~\eqref{eq: TE} satisfies
\begin{equation*}
	\Re(z) \le \Sigma_{-1}(\tau)
	\mspace{10mu} \text{or} \mspace{10mu}
	\Sigma_0^-(\tau) \le \Re(z) \le \Sigma_0^+(\tau),
\end{equation*}
where
\begin{equation*}
	\lim_{\tau \downarrow 0} \Sigma_0^\pm(\tau)
	= \pm(\abs{a} + \abs{w})
	\amd
	\lim_{\tau \downarrow 0} \Sigma_{-1}(\tau)
	= -\infty
\end{equation*}
hold.
\end{theorem}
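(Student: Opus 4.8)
The plan is to reduce the whole statement to the scalar inequality \eqref{eq: real parts of zeros of h(argdot; a, w, s)}, which with $s = \tau$ and $x \coloneqq \Re(z)$ reads $\abs{x} \le \abs{a} + \abs{w}\e^{-\tau x}$. This single inequality is equivalent to the pair $x \le \abs{a} + \abs{w}\e^{-\tau x}$ and $-x \le \abs{a} + \abs{w}\e^{-\tau x}$, and I would solve each of these for $x$ separately, the solution set of each being governed by a real branch of the Lambert $W$ function via exactly the substitution already used to obtain \eqref{eq: set of all roots of z + a - we^{-tau z} = 0}.

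For the first (upper) inequality, consider $g(x) \coloneqq x - \abs{a} - \abs{w}\e^{-\tau x}$. Since $g'(x) = 1 + \tau\abs{w}\e^{-\tau x} > 0$, the function $g$ is strictly increasing with $g(x) \to -\infty$ as $x \to -\infty$ and $g(x) \to +\infty$ as $x \to +\infty$; hence $g(x) \le 0$ holds precisely for $x \le x^\ast$, where $x^\ast$ is the unique zero of $g$. Writing $x^\ast = v/\tau + \abs{a}$ turns $x^\ast - \abs{a} = \abs{w}\e^{-\tau x^\ast}$ into $v\e^{v} = \tau\abs{w}\e^{-\tau\abs{a}}$; as the right-hand side is small and positive for small $\tau$, the unique real solution is $v = W_0(\tau\abs{w}\e^{-\tau\abs{a}})$, so $x^\ast = \Sigma_0^+(\tau)$. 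Thus the first inequality is equivalent to $\Re(z) \le \Sigma_0^+(\tau)$.

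The second (lower) inequality is more delicate, and I expect it to be the main obstacle because the relevant function is not monotone. Set $f(x) \coloneqq -x - \abs{a} - \abs{w}\e^{-\tau x}$; from $f'(x) = -1 + \tau\abs{w}\e^{-\tau x}$ one sees that $f$ has a single interior maximum and $f(x) \to -\infty$ as $x \to \pm\infty$. Evaluating at the critical point shows the maximum value behaves like $\tau^{-1}\ln\bigl(1/(\tau\abs{w})\bigr) \to +\infty$ as $\tau \downarrow 0$, so for all sufficiently small $\tau$ the equation $f(x) = 0$ has exactly two real roots $x_1 < x_2$, and $f(x) \le 0$ holds iff $x \le x_1$ or $x \ge x_2$. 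The substitution $x = v/\tau - \abs{a}$ converts $-(x + \abs{a}) = \abs{w}\e^{-\tau x}$ into $v\e^{v} = -\tau\abs{w}\e^{\tau\abs{a}}$, whose right-hand side lies in $(-1/\e, 0)$ for small $\tau$, so both real branches apply and give $x_1 = \Sigma_{-1}(\tau)$ from $W_{-1}$ and $x_2 = \Sigma_0^-(\tau)$ from $W_0$. The care needed here is exactly to verify the domain condition $\tau\abs{w}\e^{\tau\abs{a}} < 1/\e$ that makes both branches real --- this is precisely what forces ``sufficiently small $\tau$'' --- and to match the branch ordering $W_{-1} \le -1 \le W_0$ with the ordering of the roots. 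Intersecting this solution set with $\Re(z) \le \Sigma_0^+(\tau)$ and using $\Sigma_{-1}(\tau) \le \Sigma_0^+(\tau)$ collapses the conjunction to the claimed disjunction $\Re(z) \le \Sigma_{-1}(\tau)$ or $\Sigma_0^-(\tau) \le \Re(z) \le \Sigma_0^+(\tau)$.

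Finally, for the limits I would use the local behaviour of the branches at the common domain endpoint. From $W_0(\zeta) = \zeta + O(\zeta^2)$ as $\zeta \to 0$ one gets $\tau^{-1}W_0(\pm\tau\abs{w}\e^{\mp\tau\abs{a}}) \to \pm\abs{w}$, hence $\Sigma_0^\pm(\tau) \to \pm(\abs{a} + \abs{w})$. For $\Sigma_{-1}$, using $v\e^{v} = -\tau\abs{w}\e^{\tau\abs{a}}$ with $v = W_{-1}(\argdot) \to -\infty$ yields $v/\tau = -\abs{w}\e^{\tau\abs{a}}\e^{-v} \to -\infty$, so $\Sigma_{-1}(\tau) \to -\infty$.
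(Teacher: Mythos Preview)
Your proposal is correct and follows essentially the same route as the paper. Both arguments start from the scalar inequality $\abs{\Re(z)} \le \abs{a} + \abs{w}\e^{-\tau\Re(z)}$, split it into the same two one-sided inequalities, and reduce each to a statement about the real branches $W_0$ and $W_{-1}$ via the substitution $v = \tau(\Re(z) \mp \abs{a})$; the only cosmetic difference is that the paper multiplies through by $\tau\e^{\tau(\Re(z)\mp\abs{a})}$ to reach the form $v\e^{v} = \cdots$ directly, whereas you first analyse the monotonicity of the auxiliary functions $g$ and $f$ before identifying their zeros. Your treatment of the limits (Taylor expansion of $W_0$ at $0$ and the functional relation for $W_{-1}$) is a mild elaboration of the paper's one-line appeal to $W_0'(0)=1$ and $\lim_{y\uparrow 0}W_{-1}(y)=-\infty$.
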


\begin{proof}
Let $s$ be positive.
We transform inequality~\eqref{eq: real parts of zeros of h(argdot; a, w, s)} as follows.

\begin{itemize}
\item On inequality $\Re(z) - \abs{a} \le {\abs{w}}\e^{-s\Re(z)}$:
By multiplying both sides of the inequality by $s\e^{s(\Re(z) - \abs{a})} > 0$, it becomes
\begin{equation*}
	s(\Re(z) - \abs{a})\e^{s(\Re(z) - \abs{a})}
	\le s{\abs{w}}\e^{-s\abs{a}}.
\end{equation*}
Since the right-hand side is positive, the above inequality can be solved as
\begin{equation*}
	s(\Re(z) - \abs{a}) \le W_0{\bigl( s{\abs{w}}\e^{-s\abs{a}} \bigr)},
\end{equation*}
i.e., $\Re(z) \le \Sigma_0^+(s)$.
\item On inequality $\Re(z) + \abs{a} \ge -{\abs{w}}\e^{-s\Re(z)}$:
By multiplying both sides of the inequality by $s\e^{s(\Re(z) + \abs{a})} > 0$, it becomes
\begin{equation*}
	s(\Re(z) + \abs{a})\e^{s(\Re(z) + \abs{a})}
	\ge -s{\abs{w}}\e^{s\abs{a}}.
\end{equation*}
We may assume
\begin{equation*}
	0 < s{\abs{w}}\e^{s\abs{a}} < \frac{1}{\e}
\end{equation*}
by choosing $s > 0$ sufficiently small\footnote{By using $W_0$, the condition can be expressed as
\begin{equation*}
	s < \frac{1}{\abs{a}}W_0{\left( \frac{\abs{a}}{{\abs{w}}\e} \right)}
\end{equation*}
when $a \ne 0$.}.
Since $-s{\abs{w}}\e^{s\abs{a}} \in (-1/\e, 0)$, the above inequality can be solved as
\begin{align*}
	s(\Re(z) + \abs{a})
	&\ge W_0{\bigl( -s{\abs{w}}\e^{s\abs{a}} \bigr)}, \\
\intertext{or}
	s(\Re(z) + \abs{a})
	&\le W_{-1}{\bigl( -s{\abs{w}}\e^{s\abs{a}} \bigr)},
\end{align*}
i.e., $\Re(z) \ge \Sigma_0^-(s)$ or $\Re(z) \le \Sigma_{-1}(s)$.
\end{itemize}

By combining the above inequalities, we obtain the condition on $\Re(z)$.
The limit relations follow by $W_0'(0) = 1$ and $\lim_{y \uparrow 0} W_{-1}(y) = -\infty$.
This completes the proof.
\end{proof}

\subsection{Crossing direction of a purely imaginary root}\label{subsec: crossing direction of a purely imaginary root}

We investigate the crossing direction of a purely imaginary zero of $h(\argdot; a, w, s)$ with respect to the real parameter $s \in \R$.
It has been considered by Cooke and Grossman~\cite[Section 2]{Cooke--Grossman 1982} (for real $a, w$ case) and Matsunaga~\cite[Lemma 3]{Matsunaga 2008} (for real $a$ and complex $w$ case).
We note that $h(0; a, w, s) = 0$ for some $s$ if and only if $a = w$.
In this case,
\begin{equation*}
	h(0; a, a, s) = 0
\end{equation*}
for all $s \in \R$, and therefore, only the trivial branch is obtained as a function of $s$.
This is a consequence of the argument by Rouch\'{e}'s theorem.

Suppose $h(z_0; a, w, s_0) = 0$.
Since the derivative of $h(\argdot; a, w, s)$ is given by
\begin{equation*}
	h'(z; a, w, s) = 1 + sw\e^{-sz},
\end{equation*}
we have $h'(z_0; a, w, s_0) = 1 + s_0(z_0 + a)$.

\begin{theorem}[cf.\ \cite{Cooke--Grossman 1982}, \cite{Matsunaga 2008}]\label{thm: derivative of purely imaginary root, imaginary a}
Let $a \in \C$ and $w \in \C \setminus \{0\}$ be given.
Suppose $h(\iu \Omega_0; a, w, s_0) = 0$ holds for some $\Omega_0 \in \R \setminus \{0\}$ and $s_0 \in \R$.
We assume that $h'(\iu \Omega_0; a, w, s_0) \ne 0$, i.e.,
\begin{equation*}
	\iu \Omega_0 + a \ne -\frac{1}{s_0}
\end{equation*}
holds.
Then there exist a $\delta > 0$ and a unique continuously differentiable function
\begin{equation*}
	z(\argdot) \colon (s_0 - \delta, s_0 + \delta) \to \C
\end{equation*}
such that $z(s_0) = \iu \Omega_0$ and $h(z(s); a, w, s) = 0$ holds for all $\abs{s - s_0} < \delta$.
The derivative of $z(\argdot)$ at $s_0$ is calculated as
\begin{equation*}
	z'(s_0)
	= \frac{\Omega_0(\Omega_0 + \Im(a))}{(1 + s_0\Re(a))^2 + [s_0(\Omega_0 + \Im(a))]^2} - \iu \Omega_0\frac{s_0{\left[ (\Omega_0 + \Im(a))^2 + \Re(a)^2 \right]} + \Re(a)}{(1 + s_0\Re(a))^2 + [s_0(\Omega_0 + \Im(a))]^2}.
\end{equation*}
\end{theorem}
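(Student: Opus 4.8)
The plan is to treat this as a textbook application of the implicit function theorem, followed by an implicit differentiation whose only subtlety is eliminating the exponential via the defining relation of the root; the separation into real and imaginary parts is then routine algebra.

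First I would establish existence and uniqueness of the branch $z(\argdot)$. Identifying $\C$ with $\R^2$, the map $(z, s) \mapsto h(z; a, w, s)$ is smooth — indeed real-analytic — in all variables. Because $h$ is holomorphic in $z$, its partial derivative with respect to $z$ acts on $\R^2$ as multiplication by the complex number $h'(z; a, w, s) = 1 + sw\e^{-sz}$, and by the Cauchy--Riemann equations the associated real $2 \times 2$ Jacobian is invertible exactly when this complex number is nonzero. The hypothesis, which the preceding computation records as $h'(\iu\Omega_0; a, w, s_0) = 1 + s_0(\iu\Omega_0 + a) \ne 0$, therefore makes the implicit function theorem applicable at $(\iu\Omega_0, s_0)$, producing a $\delta > 0$ and the unique $C^1$ function $z(\argdot) \colon (s_0 - \delta, s_0 + \delta) \to \C$ with $z(s_0) = \iu\Omega_0$ and $h(z(s); a, w, s) = 0$ for all $\abs{s - s_0} < \delta$.

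Next I would differentiate the identity $h(z(s); a, w, s) = 0$ with respect to $s$. Using $\partial_z h = 1 + sw\e^{-sz}$ together with $\partial_s h = zw\e^{-sz}$, the chain rule gives $z'(s_0) = -\partial_s h / \partial_z h$ evaluated at $(\iu\Omega_0, s_0)$. The decisive simplification is to remove the exponential: since $\iu\Omega_0$ is a root, the equation itself yields $w\e^{-s_0\iu\Omega_0} = \iu\Omega_0 + a$. Substituting this into both numerator and denominator collapses the quotient to the compact form
\[
	z'(s_0) = -\frac{\iu\Omega_0(\iu\Omega_0 + a)}{1 + s_0(\iu\Omega_0 + a)}.
\]

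Finally I would split this into real and imaginary parts. Writing $a = \Re(a) + \iu\Im(a)$, so that $\iu\Omega_0 + a = \Re(a) + \iu(\Omega_0 + \Im(a))$, I would multiply numerator and denominator by the conjugate of the denominator. The denominator then becomes $(1 + s_0\Re(a))^2 + [s_0(\Omega_0 + \Im(a))]^2$, which is precisely the common denominator in the claimed formula, and expanding the numerator and collecting real and imaginary parts reproduces the two stated fractions. This last bookkeeping is the only laborious step, and it is where I expect the main (if purely computational) effort to lie; the conceptual content rests entirely on the implicit function theorem and on the elimination of $w\e^{-s_0\iu\Omega_0}$ through the root relation.
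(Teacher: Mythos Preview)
Your proposal is correct and follows essentially the same route as the paper: implicit function theorem for existence, implicit differentiation yielding $z'(s_0) = -\partial_s h/\partial_z h$, elimination of the exponential via the root relation $w\e^{-s_0\iu\Omega_0} = \iu\Omega_0 + a$, and then the purely algebraic split into real and imaginary parts. The paper's own proof records the same compact quotient $z'(s_0) = -\iu\Omega_0(\iu\Omega_0 + a)/[1 + s_0(\iu\Omega_0 + a)]$ and, like you, leaves the final rationalization as a routine computation.
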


\begin{proof}
The existence of the function $z(\argdot)$ is a consequence of the implicit function theorem.
Therefore, we only need to calculate the derivative $z'(s_0)$.
Since
\begin{equation*}
	\frac{\partial h}{\partial z} = 1 + sw\e^{-sz}
	\amd
	\frac{\partial h}{\partial s} = zw\e^{-sz},
\end{equation*}
$z(\argdot)$ satisfies a differential equation
\begin{equation*}
	z'(s)
	= -\frac{z(s)(z(s) + a)}{1 + s(z(s) + a)}
\end{equation*}
by the implicit differentiation.
Here $w\e^{-sz(s)} = z(s) + a$ is used.
We note that the right-hand side of the above differential equation does not depend on $w$ explicitly.
By letting $s \coloneqq s_0$, we have
\begin{equation*}
	z'(s_0)
	= \frac{\Omega_0(\Omega_0 + \Im(a)) - \iu \Omega_0\Re(a)}{1 + s_0\Re(a) + \iu s_0 (\Omega_0 + \Im(a))}.
\end{equation*}
We omit the further calculation.
\end{proof}

\begin{remark}
When $s_0 = 0$, $h'(z_0; a, w, s_0) = 1$ holds.
When $s_0 \ne 0$, the condition $h'(z_0; a, w, s_0) \ne 0$ is equivalent to
\begin{equation*}
	z_0 \ne -\frac{1}{s_0} - a,
\end{equation*}
which is automatically satisfied when $z_0$ is purely imaginary and $a$ is real.
\end{remark}

\begin{remark}
Theorem~\ref{thm: derivative of purely imaginary root, imaginary a} shows that the crossing direction of purely imaginary zero with respect to the real parameter $s$ is determined by the sign of the real number
\begin{equation*}
	\Omega_0(\Omega_0 + \Im(a)).
\end{equation*}
It makes clear the difference between the real $a$ case and the imaginary $a$ case.
This should be compared with \cite[Proposition 1]{Cooke and Driessche 1986} and \cite[Section 3]{Boese 1998}.
\end{remark}

\section{Purely imaginary roots}\label{sec: purely imaginary roots}

Let $a \in \R$ and $w \in \C \setminus \{0\}$.
In this section, we find the $\tau$-values for which Eq.~\eqref{eq: TE} has a purely imaginary roots.

\subsection{Angular frequency equations}

For some $\Omega \in \R$, $\iu\Omega$ is a root of Eq.~\eqref{eq: TE} if and only if $\iu\Omega + a - w \e^{-\iu\tau\Omega} = 0$, i.e.,
\begin{equation}\label{eq: angular frequency}
	\left\{
	\begin{aligned}
		a - \abs{w}\cos(\Arg(w) - \tau\Omega) &= 0, \\
		\Omega - \abs{w}\sin(\Arg(w) - \tau\Omega) &= 0.
	\end{aligned}
	\right.
\end{equation}
When $\Omega$ is considered to be an unknown variable, we call \eqref{eq: angular frequency} the \textit{angular frequency equations}.
In this consideration, it is natural to assume that the delay parameter $\tau$ is also one of the unknown variables.
Then it is expected that one can solve \eqref{eq: angular frequency} with respect to $(\Omega, \tau)$ for each given $(a, w) = (a, {\abs{w}}, \Arg(w))$.

From angular frequency eqs.~\eqref{eq: angular frequency}, $\Omega$ necessarily satisfies
\begin{equation*}
	\abs{w}^2 = a^2 + \Omega^2
\end{equation*}
by the trigonometric identity $\cos^2(\argdot) + \sin^2(\argdot) \equiv 1$.
This also imposes that $\abs{w}^2 - a^2 \ge 0$, i.e.,
\begin{equation*}
	\abs{a} \le \abs{w} \amd \Omega = \pm\sqrt{\abs{w}^2 - a^2}.
\end{equation*}
When $\abs{a} = \abs{w}$, the following statements hold:
\begin{itemize}
\item Suppose $a = \abs{w}$.
Then Eq.~\eqref{eq: TE} has a root on the imaginary axis if and only if $\Arg(w) = 0$.
\item Suppose $a = -{\abs{w}}$.
Then Eq.~\eqref{eq: TE} has a root on the imaginary axis if and only if $\Arg(w) = \pi$.
\end{itemize}
In the above cases, $0$ is the only root on the imaginary axis.
This consideration shows that nontrivial situations will occur only when $w \ne 0$ and $\abs{a} < \abs{w}$.
We note that $D_\mathrm{c}$ is contained in this region, and the delay-independent instability region (III) intersects with this region.

\subsection{Angular frequencies}

To study nontrivial situations, we introduce the following notation.

\begin{notation}
Suppose $a \in \R$, $w \in \C \setminus \{0\}$, and $\abs{a} < \abs{w}$.
Let
\begin{equation*}
	\Omega(a, w)
	\coloneqq \Omega(a, \abs{w})
	\coloneqq \sqrt{\abs{w}^2 - a^2}.
\end{equation*}
Here the expression of $\Omega(a, w)$ does not depend on the argument $\Arg(w)$ of $w$.
\end{notation}

By using this notation, the critical delay $\tau_\mathrm{c}(a, w)$ is expressed by
\begin{align*}
	\tau_\mathrm{c}(a, w)
	&= \tau_\mathrm{c}(a, {\abs{w}}, \Arg(w)) \\
	&\coloneqq \frac{1}{\Omega(a, \abs{w})} \left[ \abs{\Arg(w)} - \arccos{\left( \frac{a}{\abs{w}} \right)} \right]
\end{align*}
for every $(a, w) \in D_\mathrm{c}$.
We note that $\tau_\mathrm{c}(a, w)$ depends on the absolute value of the argument of $w$.

The following lemmas give equivalent forms to angular frequency eqs.~\eqref{eq: angular frequency} under the additional conditions of
\begin{equation*}
	\Arg(w) - \tau\Omega \in [0, \pi] + 2k\pi
	\mspace{10mu} \text{or} \mspace{10mu}
	\Arg(w) - \tau\Omega \in (-\pi, 0) + 2k\pi
\end{equation*}
for some $k \in \Z$.
We note that such an integer $k$ uniquely exists by the decomposition
\begin{equation}\label{eq: decomposition of real number line}
	\R = \bigcup_{k \in \Z} ((-\pi, \pi] + 2k\pi)
\end{equation}
of the real number line.

\begin{lemma}\label{lem: angular frequency, [0, pi]}
Suppose $a \in \R$, $w \in \C \setminus \{0\}$, and $\abs{a} < \abs{w}$.
Let $k \in \Z$ be given.
Then $\Omega \in \R \setminus \{0\}$ satisfies angular frequency eqs.~\eqref{eq: angular frequency} and $\Arg(w) - \tau\Omega \in [0, \pi] + 2k\pi$ if and only if
\begin{equation*}
	\Omega = \Omega(a, w)
	\amd
	\tau\Omega = \Arg(w) - \arccos{\left( \frac{a}{\abs{w}} \right)} - 2k\pi
\end{equation*}
hold.
\end{lemma}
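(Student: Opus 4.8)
The plan is to collapse the two trigonometric equations into a single phase variable and then read off the constraint. First I would set $\psi \coloneqq \Arg(w) - \tau\Omega$, so that angular frequency eqs.~\eqref{eq: angular frequency} become the pair $a = \abs{w}\cos\psi$ and $\Omega = \abs{w}\sin\psi$. Writing $\psi = \phi + 2k\pi$ with $\phi \in [0,\pi]$ -- which is precisely the hypothesis $\Arg(w) - \tau\Omega \in [0,\pi] + 2k\pi$ -- the $2\pi$-periodicity of $\cos$ and $\sin$ lets me replace $\psi$ by $\phi$ in both equations without changing their content. I would also record the relation $\abs{w}^2 = a^2 + \Omega^2$, which was already obtained before the statement from these two equations via the Pythagorean identity.

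Next I would exploit the sign information carried by the interval $[0,\pi]$. On this interval $\sin\phi \geq 0$, and since we assume $\Omega \in \R \setminus \{0\}$, the equation $\Omega = \abs{w}\sin\phi$ forces $\sin\phi > 0$ and hence $\Omega > 0$. Combined with $\abs{w}^2 = a^2 + \Omega^2$, this pins down $\Omega = \sqrt{\abs{w}^2 - a^2} = \Omega(a, w)$, the positive root. This is the step that deserves the most care: it is exactly the restriction of $\phi$ to $[0,\pi]$ that selects the positive branch, and it is what will later distinguish this lemma from its companion statement for $\Arg(w) - \tau\Omega \in (-\pi, 0) + 2k\pi$.

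With the sign settled, the first equation reads $\cos\phi = a/\abs{w}$ with $\phi \in [0,\pi]$; since $\arccos$ is by definition the inverse of $\cos|_{[0,\pi]}$, this yields $\phi = \arccos(a/\abs{w})$. Unwinding $\psi = \phi + 2k\pi$ then gives $\Arg(w) - \tau\Omega = \arccos(a/\abs{w}) + 2k\pi$, that is, $\tau\Omega = \Arg(w) - \arccos(a/\abs{w}) - 2k\pi$, as claimed.

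For the converse I would simply substitute. Starting from $\Omega = \Omega(a,w)$ and the stated value of $\tau\Omega$, I obtain $\psi = \arccos(a/\abs{w}) + 2k\pi$, so the membership $\psi \in [0,\pi] + 2k\pi$ is immediate, and evaluating $\cos\psi = a/\abs{w}$ together with $\sin\psi = \sqrt{1 - (a/\abs{w})^2} = \Omega(a,w)/\abs{w}$ -- the nonnegative root, legitimate because $\arccos(a/\abs{w}) \in [0,\pi]$ -- recovers both equations. The hypothesis $\abs{a} < \abs{w}$ guarantees $\Omega(a,w) > 0$, so $\Omega \neq 0$ is automatic. I do not anticipate any genuine difficulty here; the only point requiring vigilance is the bookkeeping of the branch of $\sin$ and the positivity of $\Omega$, which is exactly where the interval hypothesis is used.
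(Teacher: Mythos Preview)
Your proposal is correct and follows essentially the same route as the paper: use the first equation together with the interval hypothesis to identify the phase as $\arccos(a/\abs{w})$, and use the second equation (equivalently, the sign of $\sin$ on $[0,\pi]$) to pin down $\Omega = \Omega(a,w)$. The paper's version is more compressed---it skips the auxiliary variables $\psi,\phi$ and extracts the phase first, then reads off $\Omega = \abs{w}\sin(\arccos(a/\abs{w}))$ directly---but the logical content is the same.
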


\begin{proof}
(Only-if-part). By the assumption, we have
\begin{equation*}
	\Arg(w) - \tau\Omega - 2k\pi = \arccos{\left( \frac{a}{\abs{w}} \right)}.
\end{equation*}
Therefore,
\begin{equation*}
	\Omega
	= \abs{w}\sin{\left( \arccos{\left( \frac{a}{\abs{w}} \right)} \right)}
	= \Omega(a, w)
\end{equation*}
holds.

(If-part). We only need to check whether the second equation of Eqs.~\eqref{eq: angular frequency} holds.
This is verified in view of
\begin{equation*}
	\abs{w}\sin(\Arg(w) - \tau\Omega)
	= \abs{w}\sin{\left( \arccos{\left( \frac{a}{\abs{w}} \right)} \right)}
	= \Omega(a, w)
	= \Omega.
\end{equation*}

This completes the proof.
\end{proof}

In the similar way, the following lemma is obtained.
The proof can be omitted.

\begin{lemma}\label{lem: angular frequency, (-pi, 0)}
Suppose $a \in \R$, $w \in \C \setminus \{0\}$, and $\abs{a} < \abs{w}$.
Let $k \in \Z$ be given.
Then $\Omega \in \R \setminus \{0\}$ satisfies angular frequency eqs.~\eqref{eq: angular frequency} and $\Arg(w) - \tau\Omega \in (-\pi, 0) + 2k\pi$ if and only if
\begin{equation*}
	\Omega = -\Omega(a, w)
	\amd
	\tau\Omega = \Arg(w) + \arccos{\left( \frac{a}{\abs{w}} \right)} - 2k\pi
\end{equation*}
hold.
\end{lemma}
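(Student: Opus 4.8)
The plan is to mirror the proof of Lemma~\ref{lem: angular frequency, [0, pi]} line for line, with the interval $[0,\pi]$ replaced throughout by $(-\pi,0)$; the only substantive change is a pair of sign flips coming from the parity of the trigonometric functions on the two intervals. This is precisely why the authors write that the proof ``can be omitted.''

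For the only-if part, I would begin with the first equation of~\eqref{eq: angular frequency}, namely $a = \abs{w}\cos(\Arg(w) - \tau\Omega)$, together with the hypothesis $\Arg(w) - \tau\Omega \in (-\pi, 0) + 2k\pi$. Setting $\theta \coloneqq \Arg(w) - \tau\Omega - 2k\pi \in (-\pi, 0)$, the cosine equation reads $\cos\theta = a/\abs{w}$. Since $\cos$ is even and $-\theta \in (0, \pi)$ is exactly the interval on which $\arccos$ inverts $\cos$, I obtain $-\theta = \arccos(a/\abs{w})$, that is, $\Arg(w) - \tau\Omega - 2k\pi = -\arccos(a/\abs{w})$; rearranging yields the claimed formula $\tau\Omega = \Arg(w) + \arccos(a/\abs{w}) - 2k\pi$. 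Substituting back into the second equation of~\eqref{eq: angular frequency} and using that $\sin$ is odd, I then compute
\begin{equation*}
	\Omega = \abs{w}\sin\theta = -\abs{w}\sin\left(\arccos\left(\frac{a}{\abs{w}}\right)\right) = -\Omega(a, w),
\end{equation*}
where the last step is the identity $\abs{w}\sin(\arccos(a/\abs{w})) = \sqrt{\abs{w}^2 - a^2} = \Omega(a,w)$ already invoked in Lemma~\ref{lem: angular frequency, [0, pi]}.

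For the if-part, assuming the two displayed equalities, it suffices (as in the previous lemma) to verify the second equation of~\eqref{eq: angular frequency}, because the relation $\tau\Omega = \Arg(w) + \arccos(a/\abs{w}) - 2k\pi$ forces $\Arg(w) - \tau\Omega = -\arccos(a/\abs{w}) + 2k\pi$, which makes the first equation hold automatically by evenness of $\cos$. The second equation then follows from
\begin{equation*}
	\abs{w}\sin(\Arg(w) - \tau\Omega) = -\abs{w}\sin\left(\arccos\left(\frac{a}{\abs{w}}\right)\right) = -\Omega(a, w) = \Omega.
\end{equation*}
There is no real obstacle here: the single point demanding care is the sign bookkeeping induced by confining $\Arg(w) - \tau\Omega$ to $(-\pi, 0)$ instead of $[0, \pi]$. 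On this interval $\sin$ is negative, so the angular frequency must be $\Omega = -\Omega(a, w)$ rather than $+\Omega(a, w)$, and the argument is pinned to $-\arccos(a/\abs{w})$ rather than $+\arccos(a/\abs{w})$. Everything else is verbatim the earlier argument.
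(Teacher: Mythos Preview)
Your proof is correct and is exactly the mirrored version of the paper's proof of Lemma~\ref{lem: angular frequency, [0, pi]} that the authors had in mind when they wrote that the proof ``can be omitted.'' The only substantive changes are indeed the two sign flips coming from $\sin$ being negative on $(-\pi,0)$ and from $\arccos$ inverting $\cos$ on $[0,\pi]$ rather than on $(-\pi,0)$, and you have handled both correctly.
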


\subsection{Sequence of $\tau$-values}

In view of Lemmas~\ref{lem: angular frequency, [0, pi]} and \ref{lem: angular frequency, (-pi, 0)}, we introduce the following notation.

\begin{notation}
Suppose $a \in \R$, $w \in \C \setminus \{0\}$, and $\abs{a} < \abs{w}$.
For each integer $n \ge 1$, let
\begin{align*}
	\tau_n^\pm(a, w)
	&\coloneqq \tau_n^\pm(a, {\abs{w}}, \Arg(w)) \\
	&\coloneqq \frac{1}{\Omega(a, w)} \left[ \pm \Arg(w) - \arccos{\left( \frac{a}{\abs{w}} \right)} + 2n\pi \right] \\
	&= \frac{1}{\Omega(a, w)} \left[ \pm \Arg(w) - \arccot{\left( \frac{a}{\Omega(a, w)} \right)} + 2n\pi \right],
\end{align*}
where $\tau_n^\pm(a, w)$ are always positive because of $\arccos(a/{\abs{w}}) < \pi$. 
\end{notation}

\begin{remark}
We have
\begin{equation}\label{eq: tau_n^+(a, w) - tau_n^-(a, w)}
	\tau_n^+(a, w) - \tau_n^-(a, w)
	= \frac{2\Arg(w)}{\Omega(a, w)}
\end{equation}
for all $n \ge 1$.
\end{remark}

The following theorem\footnote{It is sufficient to consider the case $\Arg(w) \in [0, \pi]$ (i.e., the case $\Im(w) \ge 0$) because we obtain
\begin{equation*}
	\bar{z} + a - \bar{w}\e^{-\tau\bar{z}} = 0
\end{equation*}
by taking the complex conjugate.
However, we do not adopt this assumption because it does not bring us any simplification as the statement and the proof show.} gives conditions on $\tau$ under which Eq.~\eqref{eq: TE} has a purely imaginary root for each given $a$ and $w$.

\begin{theorem}[cf.\ \cite{Matsunaga 2008}]\label{thm: tau-sequence, angular frequency}
Suppose $a \in \R$, $w \in \C \setminus \{0\}$, and $\abs{a} < \abs{w}$.
Then Eq.~\eqref{eq: TE} has a root on the imaginary axis if and only if one of the following conditions is satisfied:
\begin{enumerate}[label=\textup{(\roman*)}]
\item $(a, w) \in D_\mathrm{c}$ and $\tau = \tau_\mathrm{c}(a, w)$.
\item $\tau = \tau_n^+(a, w)$ for some integer $n \ge 1$.
\item $\tau = \tau_n^-(a, w)$ for some integer $n \ge 1$.
\end{enumerate}
Furthermore, the following statements hold:
\begin{itemize}
\item When \textup{(i)} with $\Arg(w) > 0$ or \textup{(ii)} holds, Eq.~\eqref{eq: TE} has only the root $\iu\Omega(a, w)$ on the imaginary axis.
\item When \textup{(i)} with $\Arg(w) < 0$ or \textup{(iii)} holds, Eq.~\eqref{eq: TE} has only the root $-\iu\Omega(a, w)$ on the imaginary axis.
\end{itemize}
\end{theorem}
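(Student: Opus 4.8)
The plan is to reduce everything to the two angular-frequency Lemmas~\ref{lem: angular frequency, [0, pi]} and \ref{lem: angular frequency, (-pi, 0)} and then to track carefully which integers $k$ produce an admissible (positive) delay. First I would recall from the preceding subsection that, because $\abs{a} < \abs{w}$, any purely imaginary root $\iu\Omega$ of Eq.~\eqref{eq: TE} must satisfy $\abs{w}^2 = a^2 + \Omega^2$, so that $\Omega = \pm\Omega(a, w)$ with $\Omega(a, w) > 0$; moreover $0$ is not a root, since $a = w$ is impossible under $a \in \R$ and $\abs{a} < \abs{w}$. Hence the only candidates for purely imaginary roots are $\iu\Omega(a, w)$ and $-\iu\Omega(a, w)$, and it suffices to determine, for each fixed $(a, w)$, the set of $\tau > 0$ making each of these a root.

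Writing $\alpha \coloneqq \arccos(a/\abs{w}) \in (0, \pi)$, I would fix $k \in \Z$ via the decomposition~\eqref{eq: decomposition of real number line} so that $\Arg(w) - \tau\Omega$ lies in exactly one interval $[0, \pi] + 2k\pi$ or $(-\pi, 0) + 2k\pi$. For $\Omega = \Omega(a, w) > 0$ only Lemma~\ref{lem: angular frequency, [0, pi]} can apply (Lemma~\ref{lem: angular frequency, (-pi, 0)} would force a negative frequency), giving $\tau\Omega(a, w) = \Arg(w) - \alpha - 2k\pi$; symmetrically, for $\Omega = -\Omega(a, w)$ only Lemma~\ref{lem: angular frequency, (-pi, 0)} applies, giving $\tau\Omega(a, w) = -\Arg(w) - \alpha + 2k\pi$. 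The key bookkeeping step is then to impose $\tau > 0$ and use $\Arg(w) \in (-\pi, \pi]$ and $\alpha \in (0, \pi)$ to pin down the admissible $k$. For the $+\iu\Omega(a, w)$ branch this yields $\tau = \tau_n^+(a, w)$ exactly for $k = -n$ with $n \ge 1$, while $k = 0$ contributes the single value $\tau_\mathrm{c}(a, w)$ and only under $\Arg(w) > \alpha$, i.e.\ $(a, w) \in D_\mathrm{c}$ with $\Arg(w) > 0$; every $k \ge 1$ gives $\tau < 0$. The $-\iu\Omega(a, w)$ branch is the mirror image: $\tau = \tau_n^-(a, w)$ for $k = n \ge 1$, with $k = 0$ giving $\tau_\mathrm{c}(a, w)$ precisely when $\Arg(w) < -\alpha$, i.e.\ $(a, w) \in D_\mathrm{c}$ with $\Arg(w) < 0$.

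Taking the union of the two branches gives the equivalence: Eq.~\eqref{eq: TE} has a purely imaginary root iff $\tau \in \{\tau_n^+(a, w)\}_{n \ge 1} \cup \{\tau_n^-(a, w)\}_{n \ge 1}$ or $\tau = \tau_\mathrm{c}(a, w)$, where the two $k = 0$ contributions merge into condition~(i) once one observes that $\abs{\Arg(w)} > \alpha$ is equivalent to $\Re(w) < a$, hence to $(a, w) \in D_\mathrm{c}$. This establishes (i)--(iii). For the ``furthermore'' statements, each admissible $\tau$ is by construction produced by exactly one sign of $\Omega$: the values $\tau_n^+$ and the $\Arg(w) > 0$ instance of $\tau_\mathrm{c}$ come from the $+\iu\Omega(a, w)$ branch, while $\tau_n^-$ and the $\Arg(w) < 0$ instance of $\tau_\mathrm{c}$ come from the $-\iu\Omega(a, w)$ branch, which is exactly what the two bullet points assert.

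The step I expect to be the main obstacle is the uniqueness hidden in the word ``only'': one must check that a $\tau$ arising from one branch does not simultaneously make the opposite imaginary value a root. Comparing the families directly, each cross-branch coincidence $\tau_n^+ = \tau_m^-$, $\tau_n^+ = \tau_\mathrm{c}$, or $\tau_n^- = \tau_\mathrm{c}$ forces $\Arg(w) \in \pi\Z$, hence $\Arg(w) \in \{0, \pi\}$ in the admissible range, that is, $w \in \R$. Thus for $w \in \C \setminus \R$ the two branches are genuinely disjoint and the word ``only'' is literal, whereas for real $w$ the conjugate symmetry $\overline{\iu\Omega} = -\iu\Omega$ makes $\pm\iu\Omega(a, w)$ appear together; I would therefore read each bullet as identifying the root contributed by its own branch, the two statements being mutually consistent and jointly exhaustive.
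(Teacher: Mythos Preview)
Your proof is correct and follows essentially the same route as the paper: split according to which half-period interval $\Arg(w) - \tau\Omega$ lies in (equivalently, by the sign of $\Omega$), invoke Lemmas~\ref{lem: angular frequency, [0, pi]} and \ref{lem: angular frequency, (-pi, 0)}, and then sort the admissible integers $k$ by imposing $\tau > 0$. Your added analysis of the word ``only''---showing that cross-branch coincidences $\tau_n^+ = \tau_m^-$ or $\tau_\mathrm{c} = \tau_m^-$ force $\Arg(w) \in \{0, \pi\}$, hence $w \in \R$---is more thorough than the paper's own proof, which leaves that uniqueness point implicit in the case split.
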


\begin{proof}
It holds that Eq.~\eqref{eq: TE} has a root on the imaginary axis if and only if there exists $\Omega \in \R \setminus \{0\}$ satisfying angular frequency eqs.~\eqref{eq: angular frequency}.
From~\eqref{eq: decomposition of real number line}, the consideration is divided into the following two cases.
\begin{itemize}[leftmargin=*]
\item Case 1: $\Arg(w) - \tau\Omega \in [0, \pi] + 2k\pi$ for some $k \in \Z$.
In this case, angular frequency eqs.~\eqref{eq: angular frequency} is reduced to
\begin{equation*}
	\Omega = \Omega(a, w)
	\amd
	\tau\Omega = \Arg(w) - \arccos{\left( \frac{a}{\abs{w}} \right)} - 2k\pi
\end{equation*}
from Lemma~\ref{lem: angular frequency, [0, pi]}.
The positivity of $\tau\Omega$ imposes the following conditions:
	\begin{itemize}
	\item $k = 0$, $\Arg(w) > 0$, and $a > \Re(w)$.
	In this case, $\tau = \tau_\mathrm{c}(a, w)$.
	\item $k \le -1$.
	In this case, $\tau = \tau_{-k}^+(a, w)$.
	\end{itemize}
\item Case 2: $\Arg(w) - \tau\Omega \in (-\pi, 0) + 2k\pi$ for some $k \in \Z$.
In this case, angular frequency eqs.~\eqref{eq: angular frequency} is reduced to
\begin{equation*}
	\Omega = -\Omega(a, w)
	\amd
	\tau\Omega = \Arg(w) + \arccos{\left( \frac{a}{\abs{w}} \right)} - 2k\pi
\end{equation*}
from Lemma~\ref{lem: angular frequency, (-pi, 0)}.
The negativity of $\tau\Omega$ imposes the following conditions:
	\begin{itemize}
	\item $k = 0$, $\Arg(w) < 0$, and $a > \Re(w)$.
	In this case, $\tau = \tau_\mathrm{c}(a, w)$.
	\item $k \ge 1$.
	In this case, $\tau = \tau_k^-(a, w)$.
	\end{itemize}
\end{itemize}
This completes the proof.
\end{proof}

For each given $(a, w)$ satisfying $\abs{a} < \abs{w}$, Theorem~\ref{thm: tau-sequence, angular frequency} gives conditions on $\tau$ for which Eq.~\eqref{eq: TE} has a purely imaginary root.
Here the corresponding angular frequency is uniquely determined as a function of $(a, w)$ and does not explicitly depends on $\tau$.

\begin{remark}
In the sense of asymptotic stability of the trivial solution, DDE~\eqref{eq: x'(t) = -ax(t) + wx(t - tau)} and the DDE
\begin{equation*}
	\dot{x}(t) = -\tau a + \tau w x(t - 1)
\end{equation*}
are equivalent.
However, the angular frequency of the latter equation explicitly depends on the parameter $\tau$ through the coefficients $\tau a$ and $\tau w$.
This is natural because the latter DDE is obtained by the change of time variables $t \to \tau t$ depending on the parameter $\tau$.
\end{remark}

\subsubsection{Comparison with Matsunaga's sequences}

Matsunaga~\cite[Lemmas 1 and 2]{Matsunaga 2008} has obtained the similar results, where the case $0 < \theta \le \pi/2$ is only considered and the proof is divided into the cases $b > 0$ (\cite[Lemma 1]{Matsunaga 2008}) and $b < 0$ (\cite[Lemma 2]{Matsunaga 2008}).
We recall that the parameter
\begin{equation*}
	w = -b\e^{\iu\theta} \mspace{20mu} (\text{$b \in \R \setminus \{0\}$, $\theta \in [-\pi/2, \pi/2]$})
\end{equation*}
is used in \cite{Matsunaga 2008}.
In these results, there are sequences $(\tau_n^\pm)_{n = 0}^\infty$ for the case $b > 0$ and $(\sigma_n^\pm)_{n = 0}^\infty$ for the case $b < 0$ for which Eq.~\eqref{eq: TE} has a nonzero root on the imaginary axis.
These are given by
\begin{equation*}
	\tau_n^\pm
	\coloneqq \frac{1}{\sqrt{b^2 - a^2}} \left[ \pm\theta + \arccos{\left( -\frac{a}{b} \right)} + 2n\pi \right],
\end{equation*}
and
\begin{align*}
	\sigma_n^+
	&\coloneqq \frac{1}{\sqrt{b^2 - a^2}} \left[ \theta - \arccos{\left( -\frac{a}{b} \right)} + 2n\pi \right], \\
	\sigma_n^-
	&\coloneqq \frac{1}{\sqrt{b^2 - a^2}} \left[ -\theta - \arccos{\left( -\frac{a}{b} \right)} + (2n + 2)\pi \right].
\end{align*}

It seems that there are two sequences for the existence of a nonzero root on the imaginary axis.
However, Theorem~\ref{thm: tau-sequence, angular frequency} shows that this is not the situation, which is clarified in the following lemmas.

\begin{lemma}
Let $w = -b\e^{\iu\theta}$ for $b \in \R \setminus \{0\}$ and $\theta \in [-\pi/2, \pi/2]$.
Suppose $b > 0$ and $\theta > 0$.
Then for all $n \ge 0$,
\begin{equation*}
	\tau_n^+ = \tau_{n + 1}^+(a, w)
	\amd
	\tau_n^- =
	\begin{cases}
		\tau_\mathrm{c}(a, w) & (\text{$n = 0$, $\Re(w) < a < \abs{w}$}), \\
		\tau_n^-(a, w) & (n \ge 1)
	\end{cases}
\end{equation*}
hold.
\end{lemma}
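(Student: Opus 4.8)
The plan is to translate Matsunaga's sequences into the paper's parametrization $(\abs{w}, \Arg(w))$ and then verify the claimed equalities by direct substitution; the whole argument is a computation, and I expect the only substantive point to be an index bookkeeping described below.

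First I would record $\abs{w}$ and $\Arg(w)$ in the case $b > 0$, $\theta > 0$. Exactly as in the proof of Lemma~\ref{lem: critical delay in Matsunaga}, from ${\abs{w}}\e^{\iu\Arg(w)} = b\e^{\iu(\theta + \pi)}$ together with $0 < \theta \le \pi/2$ one obtains $\abs{w} = b$ and $\Arg(w) = \theta - \pi \in (-\pi, -\pi/2]$, so that $\Arg(w) < 0$ and $\abs{\Arg(w)} = \pi - \theta$; I would also note $\Omega(a, w) = \sqrt{b^2 - a^2}$. I would then use identity~\eqref{eq: arccos(x) + arccos(-x) = pi} to replace $\arccos(-a/b)$ by $\pi - \arccos(a/{\abs{w}})$ throughout Matsunaga's $\tau_n^\pm$, and substitute $\theta = \Arg(w) + \pi$. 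Each claimed equality then reduces to matching the constant offset inside the bracket against the paper's bracket $[\pm\Arg(w) - \arccos(a/{\abs{w}}) + 2m\pi]$.

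The point that requires care --- and which carries the real content of the lemma --- is that the index shift differs between the two signs. In the $+$ case the $+\pi$ produced by the $\arccos$ identity and the $+\pi$ coming from $\theta = \Arg(w) + \pi$ add up to $+2\pi$, which forces the shift $\tau_n^+ = \tau_{n+1}^+(a, w)$. In the $-$ case the sign flip turns $-\theta$ into $-\Arg(w) - \pi$, whose $-\pi$ cancels the $+\pi$ from the identity, so no shift occurs and $\tau_n^- = \tau_n^-(a, w)$ holds for every $n \ge 1$. Finally, for $n = 0$ the paper defines no $\tau_0^-(a, w)$, yet the same bracket evaluates to $\pi - \theta - \arccos(a/{\abs{w}}) = \abs{\Arg(w)} - \arccos(a/{\abs{w}})$, which is exactly the bracket defining $\tau_\mathrm{c}(a, w)$; this value is the critical delay precisely when $(a, w) \in D_\mathrm{c}$, i.e.\ when $\Re(w) < a < \abs{w}$, matching the stated condition. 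Assembling these three computations completes the proof.
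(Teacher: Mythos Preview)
Your proposal is correct and follows essentially the same route as the paper: both compute $\abs{w}=b$, $\Arg(w)=\theta-\pi$ from Lemma~\ref{lem: critical delay in Matsunaga}, apply identity~\eqref{eq: arccos(x) + arccos(-x) = pi}, and read off the index shift from the resulting brackets. Your write-up is simply more explicit about why the $+$ case shifts by one while the $-$ case does not, and about how the $n=0$ bracket collapses to $\abs{\Arg(w)}-\arccos(a/\abs{w})$; the paper records the two key bracket identities and leaves these conclusions to the reader.
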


\begin{proof}
By the assumption, $\abs{w} = b$ and $\Arg(w) = \theta - \pi < 0$ from the proof of Lemma~\ref{lem: critical delay in Matsunaga}.
Therefore, we have
\begin{equation*}
	\theta + \arccos{\left( -\frac{a}{b} \right)}
	= (\Arg(w) + \pi) + \left[ \pi - \arccos{\left( \frac{a}{\abs{w}} \right)} \right]
\end{equation*}
and
\begin{equation*}
	-\theta + \arccos{\left( -\frac{a}{b} \right)}
	= -(\Arg(w) + \pi) + \left[ \pi - \arccos{\left( \frac{a}{\abs{w}} \right)} \right]
\end{equation*}
from identity~\eqref{eq: arccos(x) + arccos(-x) = pi}.
This shows the conclusion.
\end{proof}

\begin{lemma}
Let $w = -b\e^{\iu\theta}$ for $b \in \R \setminus \{0\}$ and $\theta \in [-\pi/2, \pi/2]$.
Suppose $b < 0$ and $\theta > 0$.
Then for all $n \ge 0$,
\begin{equation*}
	\sigma_n^+ =
	\begin{cases}
		\tau_\mathrm{c}(a, w) & (\text{$n = 0$, $\Re(w) < a < \abs{w}$}), \\
		\tau_n^+(a, w) & (n \ge 1)
	\end{cases}
	\amd
	\sigma_n^- = \tau_{n + 1}^-(a, w)
\end{equation*}
hold.
\end{lemma}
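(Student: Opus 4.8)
The plan is to reduce the two identities to the dictionary between Matsunaga's parametrization and the principal-argument parametrization already recorded in Lemma~\ref{lem: critical delay in Matsunaga}, and then match the defining brackets term by term. First I would recall from Case~2 of the proof of Lemma~\ref{lem: critical delay in Matsunaga} that $b < 0$ forces $\abs{w} = -b$ and $\Arg(w) = \theta$; since $\theta > 0$ by hypothesis, we get $\Arg(w) = \theta > 0$, hence $\abs{\Arg(w)} = \Arg(w)$, and $\sqrt{b^2 - a^2} = \sqrt{\abs{w}^2 - a^2} = \Omega(a, w)$. The only further ingredient is the sign bookkeeping inside the inverse cosine: from $b = -\abs{w}$ we have $-a/b = a/\abs{w}$, so $\arccos(-a/b) = \arccos(a/\abs{w})$.

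With these substitutions the verification is a direct comparison of brackets, both sides carrying the common prefactor $1/\Omega(a, w)$. Inserting $\theta = \Arg(w)$ and $\arccos(-a/b) = \arccos(a/\abs{w})$ into the definition of $\sigma_n^-$ turns its bracket into $-\Arg(w) - \arccos(a/\abs{w}) + 2(n+1)\pi$, which is exactly the bracket defining $\tau_{n+1}^-(a, w)$; hence $\sigma_n^- = \tau_{n+1}^-(a, w)$ for every $n \ge 0$. I would flag that the index shift to $n+1$ (rather than $n$) is forced by the extra $2\pi$ built into the definition of $\sigma_n^-$. The same substitutions turn the bracket of $\sigma_n^+$ into $\Arg(w) - \arccos(a/\abs{w}) + 2n\pi$, which for $n \ge 1$ is precisely the bracket of $\tau_n^+(a, w)$, giving $\sigma_n^+ = \tau_n^+(a, w)$.

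The single delicate point is the boundary index $n = 0$, where $\tau_0^+(a, w)$ is undefined since the sequence $\tau_n^+$ is indexed by $n \ge 1$. Here the bracket of $\sigma_0^+$ collapses to $\Arg(w) - \arccos(a/\abs{w})$, which by $\abs{\Arg(w)} = \Arg(w)$ is exactly the bracket of the critical delay $\tau_\mathrm{c}(a, w)$. To justify the case restriction I would note that, under $\abs{a} < \abs{w}$, the number $\sigma_0^+$ is positive precisely when $\Arg(w) > \arccos(a/\abs{w})$, which by monotonicity of $\cos$ on $[0, \pi]$ is equivalent to $\Re(w) = \abs{w}\cos(\Arg(w)) < a$; combined with $a < \abs{w}$ this is exactly the membership $(a, w) \in D_\mathrm{c}$ on which $\tau_\mathrm{c}(a, w)$ is defined. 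As the lemma is purely computational there is no real obstacle; the only things to get right are this index shift and the recognition that the $n = 0$ term of the $\sigma^+$ sequence plays the role of the critical delay rather than a would-be $\tau_0^+$.
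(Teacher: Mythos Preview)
Your proof is correct and follows essentially the same approach as the paper: both use the dictionary $\abs{w} = -b$, $\Arg(w) = \theta$ from Case~2 of Lemma~\ref{lem: critical delay in Matsunaga} (hence $-a/b = a/\abs{w}$) and substitute directly into the defining brackets. Your write-up is in fact more detailed than the paper's, which dispatches the whole computation in one sentence; your added remark that the positivity of $\sigma_0^+$ is equivalent to $\Re(w) < a$ is a useful clarification that the paper only makes later, in a separate Remark.
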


\begin{proof}
By the assumption, we have $\abs{w} = -b$ and $\Arg(w) = \theta > 0$.
Then the expressions are obtained by simply using these relations.
\end{proof}

\begin{remark}
The positivity of $\tau_0^-$ (for the case that $b > 0$ and $\theta \in (0, \pi/2]$) and $\sigma_0^+$ (for the case $b < 0$ and $\theta \in (0, \pi/2]$) must be checked because this is essential for the expression of $\tau_\mathrm{c}(a, w)$.
However, this has not been performed in \cite{Matsunaga 2008}.
\end{remark}

\subsection{Ordering of $\tau$-values}

Here we consider the ordering of $\tau_\mathrm{c}(a, w)$ and $\tau_n^\pm(a, w)$ for $n \ge 1$.
The cases $\Arg(w) = 0$ or $\Arg(w) = \pi$ are special.

\begin{lemma}\label{lem: ordering, real a and w}
Suppose $a \in \R$, $w \in \R \setminus \{0\}$, and $\abs{a} < \abs{w}$.
Then the following statements hold:
\begin{enumerate}[label=\textup{\arabic*.}]
\item If $w > 0$, then $\tau_\mathrm{c}(a, w)$ is not defined and we have
\begin{equation*}
	\tau_n^+(a, w)
	= \tau_n^-(a, w)
	= \frac{1}{\Omega(a, w)} \left[ - \arccos{\left( \frac{a}{w} \right)} + 2n\pi \right]
\end{equation*}
for all integer $n \ge 1$.
\item If $w < 0$, then $\tau_\mathrm{c}(a, w)$ is defined.
Furthermore, we have
\begin{equation*}
	\tau_\mathrm{c}(a, w)
	= \tau_1^-(a, w)
	= \frac{1}{\Omega(a, w)} \arccos{\left( \frac{a}{w} \right)}
\end{equation*}
and
\begin{equation*}
	\tau_n^+(a, w)
	= \tau_{n + 1}^-(a, w)
	= \frac{1}{\Omega(a, w)} \left[ \arccos{\left( \frac{a}{w} \right)} + 2n\pi \right]
\end{equation*}
for all integer $n \ge 1$.
\end{enumerate}
\end{lemma}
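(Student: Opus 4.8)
The plan is to substitute the specific values of $\Arg(w)$ and $\abs{w}$ for a nonzero real $w$ directly into the definitions of $\tau_n^\pm(a, w)$ and $\tau_\mathrm{c}(a, w)$, and then reconcile the resulting arccosine terms using identity~\eqref{eq: arccos(x) + arccos(-x) = pi}. Since all three expressions share the common prefactor $1/\Omega(a, w)$, the entire computation reduces to comparing the bracketed quantities, so no analysis beyond elementary identities is needed.

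First I would dispose of the case $w > 0$, where $\Arg(w) = 0$ and $\abs{w} = w$. Because the only term distinguishing $\tau_n^+$ from $\tau_n^-$ is $\pm\Arg(w)$, it vanishes and the two coincide, giving the asserted common value $\frac{1}{\Omega(a, w)}\bigl[ -\arccos(a/w) + 2n\pi \bigr]$. For $\tau_\mathrm{c}(a, w)$ to be defined one needs $(a, w) \in D_\mathrm{c}$, i.e.\ $\Re(w) < a < \abs{w}$; for positive real $w$ this reads $w < a < w$, which is impossible, so $\tau_\mathrm{c}(a, w)$ is indeed undefined.

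Next I would treat $w < 0$, where $\Arg(w) = \pi$ and $\abs{w} = -w$, so that $a/\abs{w} = -a/w$. The defining inequality $\Re(w) < a < \abs{w}$ becomes $w < a < -w$, which is exactly the standing hypothesis $\abs{a} < \abs{w}$; hence $\tau_\mathrm{c}(a, w)$ is defined. Using $\abs{\Arg(w)} = \pi$ together with identity~\eqref{eq: arccos(x) + arccos(-x) = pi} in the form $\pi - \arccos(-a/w) = \arccos(a/w)$, the bracket for $\tau_\mathrm{c}$ collapses to $\arccos(a/w)$, and the same identity applied to $\tau_n^+$ (whose bracket carries $+\pi$) yields $\arccos(a/w) + 2n\pi$. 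For $\tau_n^-$ the bracket carries $-\pi$, and rewriting $-\arccos(-a/w) = \arccos(a/w) - \pi$ produces $\arccos(a/w) + 2(n-1)\pi$; reindexing then gives $\tau_1^- = \tau_\mathrm{c}(a, w)$ and $\tau_{n+1}^- = \tau_n^+$ for $n \ge 1$, as claimed.

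The computation is elementary, and the only point requiring care is the index shift in the $\tau_n^-$ family: the identity introduces an extra $-2\pi$ into the bracket, which is precisely what makes $\tau_\mathrm{c}$ coincide with $\tau_1^-$ (rather than with a $\tau_0^-$, which lies outside the defined range $n \ge 1$) and aligns $\tau_{n+1}^-$ with $\tau_n^+$. I would verify this bookkeeping explicitly, since it is the substantive content of statement~2, while everything else is routine substitution.
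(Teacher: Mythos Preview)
Your proposal is correct and follows essentially the same approach as the paper: both proofs substitute $\Arg(w)=0$ (for $w>0$) or $\Arg(w)=\pi$ (for $w<0$) into the definitions of $\tau_n^\pm$ and $\tau_\mathrm{c}$, then invoke identity~\eqref{eq: arccos(x) + arccos(-x) = pi} to rewrite $\pi - \arccos(a/\abs{w})$ as $\arccos(a/w)$ and perform the index shift. The only cosmetic difference is that the paper cites Theorem~\ref{thm: T(a, w), real a and w} for whether $\tau_\mathrm{c}$ is defined, whereas you verify the condition $(a,w)\in D_\mathrm{c}$ directly; these are equivalent.
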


\begin{proof}
1. Since $\Arg(w) = 0$, Theorem~\ref{thm: T(a, w), real a and w} shows that $\tau_\mathrm{c}(a, w)$ is not defined.
The expressions of $\tau_n^\pm(a, w)$ are obvious.

2. Since $\Arg(w) = \pi$, $\tau_\mathrm{c}(a, w)$ is defined from Theorem~\ref{thm: T(a, w), real a and w}.
Furthermore, we have
\begin{equation*}
	\tau_1^-(a, w)
	= \tau_\mathrm{c}(a, w)
	= \frac{1}{\Omega(a, w)} \left[ \pi - \arccos{\left( \frac{a}{\abs{w}} \right)} \right]
\end{equation*}
and
\begin{equation*}
	\tau_{n + 1}^-(a, w)
	= \tau_n^+(a, w)
	= \frac{1}{\Omega(a, w)} \left[ \pi - \arccos{\left( \frac{a}{\abs{w}} \right)} + 2n\pi \right].
\end{equation*}
In view of identity~\eqref{eq: arccos(x) + arccos(-x) = pi}, the expressions are obtained.
\end{proof}

\begin{lemma}\label{lem: ordering, real a, 0 < Arg(w) < pi}
Suppose $a \in \R$, $w \in \C \setminus \{0\}$, $\abs{a} < \abs{w}$, and $\Arg(w) \in (0, \pi)$.
Then for all $n \ge 1$,
\begin{equation*}
	\tau_n^-(a, w) < \tau_n^+(a, w) < \tau_{n + 1}^-(a, w)
\end{equation*}
holds.
Furthermore, if $a > \Re(w)$, then
\begin{equation*}
	0 < \tau_\mathrm{c}(a, w) < \tau_1^-(a, w)
\end{equation*}
holds.
\end{lemma}

\begin{proof}
The inequality $\tau_n^-(a, w) < \tau_n^+(a, w)$ is a consequence of \eqref{eq: tau_n^+(a, w) - tau_n^-(a, w)}.
The inequality $\tau_n^+(a, w) < \tau_{n + 1}^-(a, w)$ follows by
\begin{equation*}
	\tau_{n + 1}^-(a, w) - \tau_n^+(a, w)
	= \frac{2(\pi - \Arg(w))}{\Omega(a, w)}
	> 0.
\end{equation*}
The inequality $\tau_\mathrm{c}(a, w) < \tau_1^-(a, w)$ for the case $a > \Re(w)$ follows by the same reasoning.
\end{proof}

\begin{lemma}\label{lem: ordering, real a, -pi < Arg(w) < 0}
Suppose $a \in \R$, $w \in \C \setminus \{0\}$, $\abs{a} < \abs{w}$, and $\Arg(w) \in (-\pi, 0)$.
Then for all $n \ge 1$,
\begin{equation*}
	\tau_n^+(a, w) < \tau_n^-(a, w) < \tau_{n + 1}^+(a, w)
\end{equation*}
holds.
Furthermore, if $a > \Re(w)$, then
\begin{equation*}
	0 < \tau_\mathrm{c}(a, w) < \tau_1^+(a, w)
\end{equation*}
holds.
\end{lemma}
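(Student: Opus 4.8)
The plan is to mirror the proof of Lemma~\ref{lem: ordering, real a, 0 < Arg(w) < pi}, exploiting that the present hypothesis $\Arg(w) \in (-\pi, 0)$ is the reflection through $0$ of the hypothesis there. All four inequalities will follow from the already-established identity~\eqref{eq: tau_n^+(a, w) - tau_n^-(a, w)} together with one elementary difference computation, plus a monotonicity remark about $\arccos$ for the positivity of the critical delay. First, for $\tau_n^+(a, w) < \tau_n^-(a, w)$ I would simply read off \eqref{eq: tau_n^+(a, w) - tau_n^-(a, w)}, namely $\tau_n^+(a, w) - \tau_n^-(a, w) = 2\Arg(w)/\Omega(a, w)$; since $\Omega(a, w) > 0$ and $\Arg(w) < 0$, the right-hand side is negative, which gives the first inequality for every $n \ge 1$.

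Second, for $\tau_n^-(a, w) < \tau_{n+1}^+(a, w)$ I would compute the difference directly from the definition of $\tau_n^\pm$. The $\arccos$ terms cancel while the two $\Arg(w)$ contributions add, leaving
\begin{equation*}
	\tau_{n+1}^+(a, w) - \tau_n^-(a, w) = \frac{2(\pi + \Arg(w))}{\Omega(a, w)}.
\end{equation*}
Because $\Arg(w) > -\pi$ the numerator is positive, so the gap is positive. This is the exact analogue of the computation $\tau_{n+1}^-(a,w) - \tau_n^+(a,w) = 2(\pi - \Arg(w))/\Omega(a,w)$ used in the previous lemma.

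Third, for the critical-delay statement I observe that when $\Arg(w) < 0$ one has $\abs{\Arg(w)} = -\Arg(w)$, so that $\tau_\mathrm{c}(a,w) = \frac{1}{\Omega(a,w)}\bigl[-\Arg(w) - \arccos(a/\abs{w})\bigr]$, which is precisely the $n=0$ instance of $\tau_n^-(a,w)$. Hence $\tau_\mathrm{c}(a,w) < \tau_1^+(a,w)$ is the $n=0$ case of the second computation above and follows at once. For the positivity $\tau_\mathrm{c}(a,w) > 0$ I would use the hypothesis $a > \Re(w) = \abs{w}\cos\Arg(w)$: dividing by $\abs{w}$ gives $a/\abs{w} > \cos\abs{\Arg(w)}$, and since $\arccos$ is decreasing and $\abs{\Arg(w)} \in (0,\pi)$, this yields $\arccos(a/\abs{w}) < \abs{\Arg(w)} = -\Arg(w)$, so the bracket is positive.

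I do not anticipate any real obstacle here: every step is an elementary subtraction or a sign/monotonicity remark, and the only points needing care are the bookkeeping when replacing $\abs{\Arg(w)}$ by $-\Arg(w)$ and the recognition that $\tau_\mathrm{c}$ coincides with $\tau_0^-$ in this regime, so the critical-delay bound is not a separate computation. As an alternative I could derive the entire lemma in one stroke from Lemma~\ref{lem: ordering, real a, 0 < Arg(w) < pi} by conjugation symmetry: applying that lemma to $\bar w$, which has $\Arg(\bar w) = -\Arg(w) \in (0,\pi)$, and then using $\tau_n^\pm(a,\bar w) = \tau_n^\mp(a,w)$ together with $\tau_\mathrm{c}(a,\bar w) = \tau_\mathrm{c}(a,w)$ (valid since $\tau_\mathrm{c}$ depends on $w$ only through $\abs{\Arg(w)}$) reproduces all four inequalities immediately.
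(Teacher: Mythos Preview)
Your proposal is correct and follows essentially the same route as the paper: use identity~\eqref{eq: tau_n^+(a, w) - tau_n^-(a, w)} for the first inequality, compute $\tau_{n+1}^+(a,w)-\tau_n^-(a,w)=2(\pi+\Arg(w))/\Omega(a,w)>0$ for the second, and observe that the critical-delay bound is the same computation (the paper simply says ``by the same reasoning''). Your explicit verification that $\tau_\mathrm{c}(a,w)>0$ via the monotonicity of $\arccos$ and your conjugation-symmetry alternative are extras not spelled out in the paper, but they are correct and do not change the approach.
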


\begin{proof}
The inequality $\tau_n^+(a, w) < \tau_n^-(a, w)$ is a consequence of \eqref{eq: tau_n^+(a, w) - tau_n^-(a, w)}.
The inequality $\tau_n^-(a, w) < \tau_{n + 1}^+(a, w)$ follows by
\begin{equation*}
	\tau_{n + 1}^+(a, w) - \tau_n^-(a, w)
	= \frac{2(\pi + \Arg(w))}{\Omega(a, w)}
	> 0.
\end{equation*}
The inequality $\tau_\mathrm{c}(a, w) < \tau_1^+(a, w)$ for the case $a > \Re(w)$ follows by the same reasoning.
\end{proof}

\section{Method by critical delay and stability region for real $a$ and $w$}\label{sec: method by critical delay for real a and w}

In this section, we study Eq.~\eqref{eq: TE} with real $a, w$ and find the stability region via the method by critical delay.
As is used in Introduction, we will use the following notation.

\begin{notation}
Let $C \colon (0, \pi) \to \R$ be the function defined by
\begin{equation*}
	C(\theta)
	\coloneqq \theta \cot\theta
	= \theta \cdot \frac{\cos\theta}{\sin\theta}
\end{equation*}
for all $\theta \in (0, \pi)$.
\end{notation}

Since
\begin{equation*}
	C'(\theta)
	= \frac{\cos\theta}{\sin\theta} - \frac{\theta}{\sin^2\theta}
	= \frac{\sin{2\theta} - 2\theta}{2\sin^2\theta}
	< 0
\end{equation*}
for $\theta \in (0, \pi)$ and
\begin{equation*}
	\lim_{\theta \downarrow 0} \frac{\theta}{\sin\theta} \cdot \cos\theta
	= 1,
\end{equation*}
the function $C \colon (0, \pi) \to \R$ is strictly monotonically decreasing and satisfies
\begin{equation*}
	\lim_{\theta \downarrow 0} C(\theta) = 1 \amd \lim_{\theta \uparrow \pi} C(\theta) = -\infty.
\end{equation*}
See Fig.~\ref{fig: theta cot(theta)} for the graph of the function $C \colon (0, \pi) \to \R$.

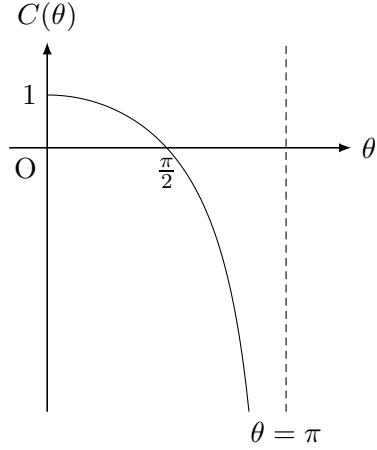
\begin{figure}[tbp]
\centering
\begin{tikzpicture}[smooth, samples=100, variable=\theta, yscale=0.7]
	\draw[-latex, semithick] (-0.5,0) -- (4,0) node[right]{$\theta$}; 
	\draw[-latex, semithick] (0,-5) -- (0,2) node[above]{$C(\theta)$};
	\draw (0,0) node[below left]{O}; 
	\draw[domain=0.001:2.655] plot(\theta,{\theta*cos(\theta r)/sin(\theta r)});
	\draw[densely dashed] (pi,-5) node[below]{$\theta = \pi$} -- (pi,2);
	\draw (0,1) node[left]{$1$};
	\draw (pi/2,0) node[below]{$\frac{\pi}{2}$};
\end{tikzpicture}
\caption{Graph of $C(\theta) = \theta \cot\theta$}
\label{fig: theta cot(theta)}
\end{figure}

The above properties show that the function $C$ gives a one-to-one correspondence between open intervals $(0, \pi)$ and $(-\infty, 1)$.
Therefore, the inverse function $C^{-1} \colon (-\infty, 1) \to \R$ is strictly monotonically decreasing and satisfies
\begin{equation}\label{eq: limits of Theta(r)}
	\lim_{r \to -\infty} C^{-1}(r) = \pi
	\amd
	\lim_{r \uparrow 1} C^{-1}(r) = 0.
\end{equation}

\subsection{Inequality on critical delay and stability region}

The following function will naturally appear for solving the inequality $\tau_\mathrm{c}(a, w) > 1$.

\begin{definition}
Let $R \colon (-\infty, 1) \to \R$ be the function defined by
\begin{equation*}
	R(r) \coloneqq \frac{C^{-1}(r)}{\sin C^{-1}(r)}
\end{equation*}
for all $r < 1$.
We also have
\begin{equation*}
	R(r) = \frac{r}{\cos C^{-1}(r)}
\end{equation*}
when $r \ne 0$ because $C^{-1}(r) \cot C^{-1}(r) = r$.
\end{definition}

We note that $R(r)$ can be considered as a function of $C^{-1}(r)$.
The following lemma gives qualitative properties of the function $R$.

\begin{lemma}\label{lem: property of R}
The function $R \colon (-\infty, 1) \to \R$ is strictly monotonically decreasing and satisfies $\lim_{r \to -\infty} R(r) = \infty$ and $\lim_{r \uparrow 1} R(r) = 1$.
Furthermore,
\begin{equation*}
	\lim_{r \to -\infty} \frac{R(r)}{\abs{r}} = 1
\end{equation*}
holds.
\end{lemma}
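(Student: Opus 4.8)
The plan is to reparametrize everything through the substitution $\theta = C^{-1}(r)$, so that $R$ becomes a composition of the strictly decreasing bijection $C^{-1} \colon (-\infty, 1) \to (0, \pi)$ with the simpler auxiliary function
\begin{equation*}
	\rho(\theta) \coloneqq \frac{\theta}{\sin\theta} \qquad (\theta \in (0, \pi)),
\end{equation*}
namely $R(r) = \rho(C^{-1}(r))$. All of the qualitative claims then follow from corresponding properties of $\rho$ together with the already-established behavior of $C^{-1}$ recorded in \eqref{eq: limits of Theta(r)}.

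First I would establish that $\rho$ is strictly monotonically increasing on $(0, \pi)$. Differentiating gives
\begin{equation*}
	\rho'(\theta) = \frac{\sin\theta - \theta\cos\theta}{\sin^2\theta},
\end{equation*}
so the sign of $\rho'$ is that of $g(\theta) \coloneqq \sin\theta - \theta\cos\theta$. Since $g(0) = 0$ and $g'(\theta) = \theta\sin\theta > 0$ for $\theta \in (0, \pi)$, the function $g$ is strictly increasing and hence positive on $(0, \pi)$; thus $\rho' > 0$ there. Because $C^{-1}$ is strictly decreasing, the composition $R = \rho \circ C^{-1}$ is then strictly monotonically decreasing, which is the first assertion.

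For the two finite-endpoint limits I would feed the limits \eqref{eq: limits of Theta(r)} of $C^{-1}$ into $\rho$. As $r \uparrow 1$ we have $C^{-1}(r) \downarrow 0$, and $\rho(\theta) \to 1$ as $\theta \downarrow 0$, giving $\lim_{r \uparrow 1} R(r) = 1$. As $r \to -\infty$ we have $C^{-1}(r) \uparrow \pi$, and $\rho(\theta) = \theta/\sin\theta \to +\infty$ as $\theta \uparrow \pi$ (the numerator tends to $\pi$ while the positive denominator tends to $0$), giving $\lim_{r \to -\infty} R(r) = \infty$. For the sharper asymptotic $R(r)/\abs{r} \to 1$ I would use the second expression $R(r) = r/\cos C^{-1}(r)$, valid for $r \ne 0$, to write
\begin{equation*}
	\frac{R(r)}{\abs{r}} = \frac{\sgn(r)}{\cos C^{-1}(r)}
\end{equation*}
for $r < 0$; since $\sgn(r) = -1$ and $\cos C^{-1}(r) \to \cos\pi = -1$ as $r \to -\infty$, the ratio tends to $(-1)/(-1) = 1$.

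The only real work is the monotonicity of $\rho$, i.e.\ the inequality $\sin\theta > \theta\cos\theta$ on $(0, \pi)$; everything else is a direct transfer of the endpoint behavior of $C^{-1}$. I expect no genuine obstacle, since that inequality is settled cleanly by the sign of $g' = \theta\sin\theta$, and the asymptotic statement collapses to evaluating $-1/\cos\theta$ at $\theta = \pi$ once the $r/\cos C^{-1}(r)$ form is used.
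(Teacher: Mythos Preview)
Your proof is correct and follows essentially the same route as the paper: write $R = \rho \circ C^{-1}$ with $\rho(\theta) = \theta/\sin\theta$, show $\rho$ is strictly increasing on $(0,\pi)$, and read off the limits from \eqref{eq: limits of Theta(r)} and the alternative formula $R(r) = r/\cos C^{-1}(r)$. The only cosmetic difference is that the paper verifies $\sin\theta - \theta\cos\theta > 0$ by rewriting the derivative as $(1 - \theta\cot\theta)/\sin\theta$ and invoking the already-established bound $C(\theta) < 1$, whereas you argue via $g'(\theta) = \theta\sin\theta > 0$; both are equally valid.
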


\begin{proof}
Since
\begin{equation*}
	\frac{\mathrm{d}}{\mathrm{d}\theta} \mspace{2mu} \frac{\theta}{\sin\theta}
	= \frac{\sin\theta - \theta \cos\theta}{\sin^2\theta}
	= \frac{1 - \theta \cot\theta}{\sin\theta}
	> 0
\end{equation*}
for $\theta \in (0, \pi)$, the function $(0, \pi) \ni \theta \mapsto \theta/\sin\theta \in \R$ is strictly monotonically increasing.
Therefore, it holds that the function $R$ is strictly monotonically decreasing because the function $C^{-1} \colon (-\infty, 1) \to \R$ is strictly monotonically decreasing.
The limits are consequences of \eqref{eq: limits of Theta(r)} and $R(r) = r/{\cos C^{-1}(r)}$ for $r \ne 0$.
\end{proof}

\begin{theorem}\label{thm: tau_c(a, w) > 1, real a and w}
Let $a, w \in \R$ be given so that $w < -{\abs{a}}$.
Then $\tau_\mathrm{c}(a, w) > 1$ if and only if
\begin{equation*}
	a > -1 \amd {-R(-a)} < w.
\end{equation*}
hold.
\end{theorem}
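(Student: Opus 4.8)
The plan is to reduce the inequality $\tau_\mathrm{c}(a,w) > 1$ to a comparison of the function $C$ with $-a$, and then read off the threshold through $C^{-1}$ and $R$. Since $w < -\abs{a}$ we are in case (II) of Theorem~\ref{thm: T(a, w), real a and w}, so $\tau_\mathrm{c}(a,w) = \arccos(a/w)/\Omega$ with $\Omega := \sqrt{w^2 - a^2} > 0$. Writing $\theta := \arccos(a/w) \in (0,\pi)$ and using $w < 0$, I would record the two identities
\begin{equation*}
	\abs{w}\cos\theta = -a
	\amd
	\abs{w}\sin\theta = \Omega .
\end{equation*}
The second gives $\tau_\mathrm{c}(a,w) = \theta/(\abs{w}\sin\theta)$, and since $\theta, \sin\theta, \abs{w} > 0$ this shows at once that
\begin{equation*}
	\tau_\mathrm{c}(a,w) > 1
	\iff
	\abs{w} < \frac{\theta}{\sin\theta}.
\end{equation*}

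The next step is to eliminate the implicit dependence of $\theta$ on $w$. Multiplying $\abs{w} < \theta/\sin\theta$ by $\cos\theta$ and substituting $\abs{w}\cos\theta = -a$ turns the inequality into a comparison between $C(\theta) = \theta\cot\theta$ and $-a$: it becomes $C(\theta) > -a$ when $\cos\theta > 0$ (equivalently $a < 0$) and $C(\theta) < -a$ when $\cos\theta < 0$ (equivalently $a > 0$), while the case $a = 0$ forces $\theta = \pi/2$ and is checked directly against $R(0) = \pi/2$. Because $C \colon (0,\pi) \to (-\infty,1)$ is a strictly decreasing bijection, either comparison can hold only if $-a$ lies in the range $(-\infty,1)$, which is exactly the constraint $a > -1$; and in that range it places $\theta$ on a definite side of $\theta_* := C^{-1}(-a)$.

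To convert the condition on $\theta$ back into one on $w$, I would use that for fixed $a$ the angle $\theta = \arccos(-a/\abs{w})$ is strictly monotone in $\abs{w}$, increasing when $a < 0$ and decreasing when $a > 0$. Hence the threshold $\theta = \theta_*$ corresponds to a single threshold value of $\abs{w}$, and computing it is the heart of the matter: at $\theta = \theta_*$ the identity $\abs{w}\cos\theta = -a = C(\theta_*) = \theta_*\cot\theta_*$ yields $\abs{w}_* = \theta_*/\sin\theta_* = R(-a)$ by the definition of $R$. In the case $a < 0$, $C(\theta) > -a \iff \theta < \theta_*$ (as $C$ is decreasing) and $\theta$ increases with $\abs{w}$, so the condition reads $\abs{w} < R(-a)$; in the case $a > 0$, $C(\theta) < -a \iff \theta > \theta_*$ and $\theta$ decreases with $\abs{w}$, giving again $\abs{w} < R(-a)$. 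Together with $a > -1$ this is precisely $-R(-a) < w$.

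The main obstacle is the sign bookkeeping: the sign of $\cos\theta$, equal to $-\sgn(a)$, simultaneously reverses the inequality after multiplication and reverses the direction in which $\theta$ moves with $\abs{w}$, so one must verify that these two reversals cancel consistently in each case (they do, as above). A clean independent check of the direction, which I would use to corroborate the conclusion, is the homogeneity relation~\eqref{eq: tau_c(a tau, w tau)}: along any ray $s \mapsto (sa, sw)$ one has $\tau_\mathrm{c}(sa,sw) = \tau_\mathrm{c}(a,w)/s$, so $\tau_\mathrm{c} > 1$ holds exactly on the part of the ray nearer the origin, bounded by the single point where $\tau_\mathrm{c} = 1$. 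Since that boundary locus is precisely the curve $w = -R(-a)$ with $a > -1$ by the threshold computation, the region $\{\tau_\mathrm{c} > 1\}$ is the one consisting of points with small $\abs{w}$, namely $-R(-a) < w$.
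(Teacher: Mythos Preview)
Your argument is correct, but the paper's proof is shorter because it works with a different auxiliary variable. Instead of your $\theta = \arccos(a/w) \in (0,\pi)$, the paper sets $X \coloneqq \sqrt{w^2 - a^2} > 0$ and uses the arccot form $\tau_\mathrm{c}(a,w) = X^{-1}\arccot(-a/X)$. Then $\tau_\mathrm{c}(a,w) > 1$ reads $X < \arccot(-a/X)$; since the right-hand side lies in $(0,\pi)$, one may apply the decreasing function $\cot|_{(0,\pi)}$ to obtain directly $C(X) = X\cot X > -a$, with no split into cases by the sign of $a$. The constraint $a > -1$ drops out from the range of $C$, and then $X < C^{-1}(-a)$ is squared to give $w^2 < a^2 + C^{-1}(-a)^2 = R(-a)^2$ via $1 + \cot^2 = 1/\sin^2$.

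The point is that your two sign reversals---multiplying by $\cos\theta$, and the monotonicity direction of $\theta$ in $\abs{w}$---are avoided altogether: $X$ depends on $\abs{w}$ monotonically in a single direction, and taking $\cot$ of both sides rather than multiplying by $\cos\theta$ keeps a single inequality sign throughout. Your route is a valid alternative and the homogeneity check via~\eqref{eq: tau_c(a tau, w tau)} is a nice independent confirmation of the direction of the final inequality; the paper's route simply sidesteps the need for any such confirmation.
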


\begin{proof}
The inequality $\tau_\mathrm{c}(a, w) > 1$ is equivalent to
\begin{equation*}
	\sqrt{w^2 - a^2}
	< \arccot{\left( -\frac{a}{\sqrt{w^2 - a^2}} \right)}
\end{equation*}
by the expression of $\tau_\mathrm{c}(a, w)$.
Let $X(a, w) \coloneqq \sqrt{w^2 - a^2}$.
Since $\cot|_{(0, \pi)} \colon (0, \pi) \to \R$ is strictly monotonically decreasing, the above inequality can be solved as
\begin{equation*}
	a > -1 \amd X(a, w) < C^{-1}(-a).
\end{equation*}
By solving the last inequality with respect to $w$, we obtain
\begin{align*}
	w^2
	&< a^2 + C^{-1}(-a)^2 \\
	&= C^{-1}(-a)^2 \bigl( \cot^2{C^{-1}(-a)} + 1 \bigr),
\end{align*}
which is equivalent to $-R(-a) < w$ because of the negativity of $w$.
\end{proof}

From Theorem~\ref{thm: tau_c(a, w) > 1, real a and w}, the region in $(a, w)$-plane obtained by the inequality $\tau_\mathrm{c}(a, w) > 1$ is expressed by the function $R \colon (-\infty, 1) \to \R$, whose qualitative properties have been revealed by Lemma~\ref{lem: property of R}.

The following stability condition on $a$, $w$, and $\tau$ is obtained as a corollary of Theorems~\ref{thm: T(a, w), real a and w} and \ref{thm: tau_c(a, w) > 1, real a and w}.
The result is due to Hayes~\cite[Theorem 1]{Hayes 1950}.

\begin{corollary}[\cite{Hayes 1950}, refs.\ \cite{Hale--Lunel 1993}, \cite{Diekmann--vanGils--Lunel--Walther 1995}]\label{cor: TE, real a and w}
Suppose $a, w \in \R$.
Then all the roots of Eq.~\eqref{eq: TE} have negative real parts if and only if  \begin{equation*}
	a > -\frac{1}{\tau}
	\amd
	{-\frac{1}{\tau}}R(-\tau a) < w < a
\end{equation*}
hold.
\end{corollary}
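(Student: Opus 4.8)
The plan is to specialize the method by critical delay to real coefficients and then merge the delay-independent stable region and the delay-dependent stable region into the single two-sided inequality of the statement.

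First I would record the characterization coming from Theorem~\ref{thm: T(a, w), real a and w}: for a fixed $\tau > 0$, all roots of Eq.~\eqref{eq: TE} have negative real parts (i.e.\ $\tau \in T(a,w)$) if and only if either $a > 0$ and $-a \le w < a$ (case (I)), or $w < -\abs{a}$ together with $\tau_\mathrm{c}(a,w) > \tau$ (case (II)); case (III) never contributes since there $T(a,w) = \emptyset$. Using the scaling identity~\eqref{eq: tau_c(a tau, w tau)}, the inequality $\tau_\mathrm{c}(a,w) > \tau$ is equivalent to $\tau_\mathrm{c}(\tau a, \tau w) > 1$, and since $w < -\abs{a}$ is equivalent to $\tau w < -\abs{\tau a}$, I may apply Theorem~\ref{thm: tau_c(a, w) > 1, real a and w} to the scaled pair $(\tau a, \tau w)$. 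This turns the stable part of case (II) into the conditions $a > -1/\tau$ and $-\tfrac{1}{\tau} R(-\tau a) < w < -\abs{a}$.

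Writing $g(a) \coloneqq -\tfrac{1}{\tau} R(-\tau a)$, which is defined precisely when $-\tau a < 1$, i.e.\ when $a > -1/\tau$, it remains to glue case (I) and the reduced case (II) into the region $\{\, a > -1/\tau,\ g(a) < w < a \,\}$. I would split on the sign of $a$: for $-1/\tau < a \le 0$ one has $-\abs{a} = a$, so case (I) is vacuous and the reduced case (II) already delivers $g(a) < w < a$; for $a > 0$ one has $-\abs{a} = -a$, so case (I) supplies $-a \le w < a$ while case (II) supplies $g(a) < w < -a$, and these join into the single interval $(g(a), a)$ precisely when $g(a) < -a$. Thus everything reduces to comparing the curve $w = g(a)$ with the two lines $w = \pm a$.

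The crux is the elementary inequality $R(r) > \abs{r}$ for every $r \in (-\infty, 1)$. Writing $\theta \coloneqq C^{-1}(r) \in (0, \pi)$ and using $R(r) = \theta / \sin\theta$ together with $\abs{r} = \abs{\theta \cot\theta} = \theta\abs{\cos\theta}/\sin\theta$, I obtain
\[
	R(r) - \abs{r} = \frac{\theta\,(1 - \abs{\cos\theta})}{\sin\theta} > 0,
\]
since $\sin\theta > 0$ and $\abs{\cos\theta} < 1$ on $(0, \pi)$. Translating back via $r = -\tau a$ yields $g(a) < -a$ for $a > 0$ and $g(a) < a$ for $-1/\tau < a \le 0$, which is exactly what the gluing step needs and which also shows the displayed interval is nonempty. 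The only remaining subtlety is bookkeeping of the strict and nonstrict boundaries: the closed edge $w = -a$ of case (I) is absorbed because $g(a) < -a$ holds strictly, while $w = a$ stays excluded, so the union is the clean region $g(a) < w < a$ with both endpoints open. The substantive analytic content is the single inequality $R(r) > \abs{r}$ above, for which the definition of $R$ and the monotonicity and limits collected in Lemma~\ref{lem: property of R} provide the right framing.
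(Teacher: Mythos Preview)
Your proof is correct and follows essentially the same route as the paper: decompose via Theorem~\ref{thm: T(a, w), real a and w}, rescale case~(II) using \eqref{eq: tau_c(a tau, w tau)} and Theorem~\ref{thm: tau_c(a, w) > 1, real a and w}, then merge the two pieces into a single strip. The only difference is that the paper's proof handles the merging step in one line by an appeal to Lemma~\ref{lem: property of R}, whereas you carry out the gluing explicitly by proving $R(r) > \lvert r\rvert$ via $R(r) - \lvert r\rvert = \theta(1 - \lvert\cos\theta\rvert)/\sin\theta > 0$; your version is in fact cleaner, since Lemma~\ref{lem: property of R} as stated does not literally contain this inequality.
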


\begin{proof}
From Theorem~\ref{thm: T(a, w), real a and w}, all the roots of Eq.~\eqref{eq: TE} have negative real parts if and only if one of the following conditions is satisfied:
\begin{itemize}
\item $a > 0$ and $-a \le w < a$.
\item $w < -{\abs{a}}$ and $\tau_\mathrm{c}(a, w) > \tau$.
\end{itemize}
Here the second condition becomes
\begin{equation*}
	\tau a > -1 \amd {-R(-\tau a)} < \tau w < -\tau{\abs{a}}
\end{equation*}
from equality~\eqref{eq: tau_c(a tau, w tau)} and Theorem~\ref{thm: tau_c(a, w) > 1, real a and w}.
By combining this and the first condition, we obtain the conclusion from Lemma~\ref{lem: property of R}.
\end{proof}

\begin{remark}\label{rmk: critical delay, real a and w}
In general, it is not apparent how to derive the expression of the critical delay from Corollary~\ref{cor: TE, real a and w}.
By following the argument of the proof of Theorem~\ref{thm: tau_c(a, w) > 1, real a and w} in reverse, we obtain the following equivalences: For $w < -{\abs{a}}$,
\begin{align*}
	&\tau a > -1 \amd {-\frac{1}{\tau}}R(-\tau a) < w \\
	&\iff \tau a > -1 \amd \tau \sqrt{w^2 - a^2} < C^{-1}(-\tau a) \\
	&\iff 0 < \tau \sqrt{w^2 - a^2} < \pi \amd \tau \sqrt{w^2 - a^2} \cot \Bigl( \tau\sqrt{w^2 - a^2} \Bigr) > -\tau a \\
	&\iff \tau \sqrt{w^2 - a^2} < \arccot{\left( -\frac{a}{\sqrt{w^2 - a^2}} \right)}.
\end{align*}
This shows
\begin{equation*}
	\tau_\mathrm{c}(a, w) = \frac{1}{\sqrt{w^2 - a^2}} \arccot{\left( -\frac{a}{\sqrt{w^2 - a^2}} \right)}.
\end{equation*}
\end{remark}

\subsection{Parametrization of stability boundary curve}\label{subsec: parametrization of boundary curve for real a, w}

Since the function $C^{-1} \colon (-\infty, 1) \to \R$ gives a one-to-one correspondence between the open intervals $(-\infty, 1)$ and $(0, \pi)$, the curve
\begin{equation*}
	\Set{\left( a, -\frac{1}{\tau}R(-\tau a) \right)}{a > -\frac{1}{\tau}}
\end{equation*}
in $(a, w)$-plane is parametrized by
\begin{equation}\label{eq: stability boundary curve for real a, w}
	a = -\frac{1}{\tau} \theta\cot{\theta}
	\amd
	w = -\frac{\theta}{\tau\sin{\theta}}\end{equation}
for $\theta \in (0, \pi)$ in view of
\begin{equation*}
	-\tau a = C^{-1}(-\tau a)\cot C^{-1}(-\tau a).
\end{equation*}
The stability boundary curves are depicted in Fig.~\ref{fig: TE, real a and w} for the cases of $\tau = 1$ and $\tau = 1/3$.
The picture is well-known in the literature (see \cite[Figure 5.1 in Chapter 5]{Hale--Lunel 1993} and \cite[Figure XI.1 in Chapter XI]{Diekmann--vanGils--Lunel--Walther 1995}).
See also \cite[Comment after Theorem A]{Matsunaga 2008}.

\begin{figure}[tbp]
\centering
\begin{tikzpicture}[smooth, samples=100, variable=\a, scale=0.5]
	\draw[-latex, semithick, dotted] (-6,0) -- (6,0) node[right]{$a$}; 
	\draw[-latex, semithick, dotted] (0,-6) -- (0,3) node[above]{$w$}; 
	\draw (0,0) node[above right]{O}; 
	\draw[domain=0:6, densely dashed] plot(\a,-\a) node[right]{$w = -a$};
	\draw[domain=0:-6, densely dashed] plot(\a,\a) node[left]{$w = a$};
	\draw[domain=0.001:2.68, variable=\theta] plot({-\theta*cos(\theta r)/sin(\theta r)},{-\theta/sin(\theta r)});
	\draw[domain=0.001:1.92, variable=\theta, thick] plot({-3*\theta*cos(\theta r)/sin(\theta r)},{-3*\theta/sin(\theta r)});
	\draw (-1,-1) node[left]{$(-1, -1)$};
	\fill (-1,-1) circle[radius=0.1];
	\draw (0,-pi/2) node[below left]{$-\frac{\pi}{2}$};
	\draw (-3,-3) node[left]{$(-3, -3)$};
	\fill (-3,-3) circle[radius=0.1];
	\draw (0,-3*pi/2) node[below left]{$-\frac{3\pi}{2}$};
\end{tikzpicture}
\caption{Stability boundary curves for $\tau = 1$ (solid) and $\tau = 1/3$ (bold)}
\label{fig: TE, real a and w}
\end{figure}
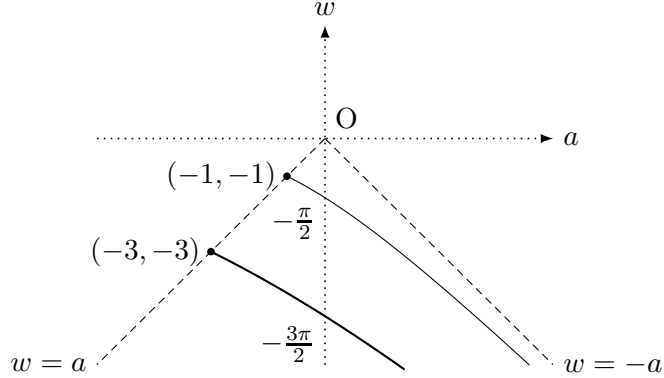

\subsubsection{Comparison with a study via Pontryagin's results}

For simplicity, let $\tau = 1$.
In \cite[Theorem A.5]{Hale--Lunel 1993}, the necessary and sufficient condition for which all the roots of Eq.~\eqref{eq: TE} have negative real parts is given as follows via Pontryagin's results (see \cite[Theorems A.3 and A.4]{Hale--Lunel 1993}): $a > -1$, $a - w > 0$, and
\begin{equation*}
	w > -C^{-1}(-a)\sin C^{-1}(-a) + a \cos C^{-1}(-a).
\end{equation*}
However, the parametrization~\eqref{eq: stability boundary curve for real a, w} is not directly obtained by this expression.
The above condition is same as that given in Corollary~\ref{cor: TE, real a and w} in view of
\begin{align*}
	& {-C^{-1}(-a)} \sin{C^{-1}(-a)} + a \cos{C^{-1}(-a)} \\
	&= -\frac{1}{\sin{C^{-1}(-a)}} \bigl( C^{-1}(-a)\sin^2{C^{-1}(-a)} - a\sin{C^{-1}(-a)}\cos{C^{-1}(-a)} \bigr) \\
	&= -R(-a),
\end{align*}
where $-a\sin{C^{-1}(-a)} = C^{-1}(-a)\cos{C^{-1}(-a)}$ is used.

The above procedure can be understood as a process eliminating the parameter $a$, which is explained as follows.
Suppose $a, w \in \R$.
By substituting $z = \iu\Omega$ ($\Omega \in \R \setminus \{0\}$) in Eq.~\eqref{eq: TE}, we have
\begin{align*}
	w
	&= (\iu\Omega + a)(\cos{\tau\Omega} + \iu\sin{\tau\Omega}) \\
	&= a\cos{\tau\Omega} - \Omega\sin{\tau\Omega} + \iu(a\sin{\tau\Omega} + \Omega\cos{\tau\Omega}).
\end{align*}
Here we are using the equivalent expression $(z + a)\e^{\tau z} - w = 0$ for Eq.~\eqref{eq: TE}.
Then the assumption $\Im(w) = 0$ leads to $a\sin{\tau\Omega} + \Omega\cos{\tau\Omega} = 0$, i.e.,
\begin{equation*}
	\Omega\cot{\tau\Omega} = -a
\end{equation*}
when $a \ne 0$.
Therefore, $w$ is expressed by
\begin{equation*}
	w
	= a\cos{\tau\Omega} - \Omega\sin{\tau\Omega}
	= - \frac{\Omega}{\sin{\tau\Omega}}
\end{equation*}
in the same way as above.
The above consideration is related to the \textit{method of D-partitions}.
See Section~\ref{sec: comparison with D-partitions} for the detail.

The above discussion is summarized in the following lemma.

\begin{lemma}
Suppose $\theta \not\in \pi\Z$ and $r \in \R$.
Then $\theta\cot{\theta} = r$ implies
\begin{equation*}
	\frac{\theta}{\sin\theta} = r\cos\theta + \theta\sin{\theta}.
\end{equation*}
Furthermore, if $\cos\theta \ne 0$, then the converse also holds.
\end{lemma}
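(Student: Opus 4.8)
The plan is to reduce both the hypothesis $\theta\cot\theta = r$ and the target identity to a single common algebraic relation by clearing the denominator $\sin\theta$. This is legitimate throughout because $\theta\notin\pi\Z$ guarantees $\sin\theta\ne 0$. First I would rewrite the hypothesis as $\theta\cos\theta = r\sin\theta$. Next, multiplying the target equation $\theta/\sin\theta = r\cos\theta + \theta\sin\theta$ through by $\sin\theta$ gives $\theta = r\sin\theta\cos\theta + \theta\sin^2\theta$, and after moving the last term over and invoking the Pythagorean identity $1 - \sin^2\theta = \cos^2\theta$, this becomes $\theta\cos^2\theta = r\sin\theta\cos\theta$. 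So the entire statement comes down to relating $\theta\cos\theta = r\sin\theta$ with $\theta\cos^2\theta = r\sin\theta\cos\theta$.

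For the forward implication I would simply multiply the hypothesis $\theta\cos\theta = r\sin\theta$ by $\cos\theta$, landing exactly on $\theta\cos^2\theta = r\sin\theta\cos\theta$, which is the reduced form of the desired identity; unwinding the reduction then yields the claimed equation. No case distinction is needed here, since multiplying by $\cos\theta$ is always valid.

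For the converse, assuming $\cos\theta\ne 0$, I would start from $\theta\cos^2\theta = r\sin\theta\cos\theta$ (the reduced form of the given identity) and cancel the common factor $\cos\theta$ to recover $\theta\cos\theta = r\sin\theta$; dividing by the nonzero $\sin\theta$ then gives $\theta\cot\theta = r$. The one subtle point—and precisely the reason the converse requires $\cos\theta\ne 0$—is this cancellation step. Without that hypothesis, the relation $\theta\cos^2\theta = r\sin\theta\cos\theta$ holds automatically whenever $\cos\theta = 0$ (both sides vanish) and therefore imposes no constraint on $r$; this is exactly what happens at $\theta = \pi/2$, where $\cot\theta = 0$ makes $\theta\cot\theta = r$ force $r = 0$ while the target identity reduces to the tautology $\theta/\sin\theta = \theta\sin\theta$ for every $r$. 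Thus the only real content of the proof is recognizing where the Pythagorean identity and the divisibility by $\cos\theta$ enter, and the asymmetry between the two directions is entirely accounted for by that single cancellation.
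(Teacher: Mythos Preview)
Your proof is correct and follows essentially the same route as the paper: both arguments reduce the target identity via the Pythagorean relation $1-\sin^2\theta=\cos^2\theta$ and recover $\theta\cot\theta=r$ for the converse by dividing out the factor $\cos\theta$. The only cosmetic difference is that you first clear the denominator $\sin\theta$, whereas the paper rewrites the equation as $\dfrac{\theta}{\sin\theta}(1-\sin^2\theta)=r\cos\theta$ before dividing by $\cos\theta$; the substance is identical.
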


\begin{proof}
We only need to show the converse under the assumption of $\cos\theta \ne 0$.
Then the equation is equivalent to
\begin{equation*}
	\frac{\theta}{\sin\theta}(1 - \sin^2\theta) = r\cos\theta.
\end{equation*}
By dividing the both sides by $\cos\theta$, we obtain $\theta\cot{\theta} = r$.
\end{proof}

\subsection{Remark on Hayes' result}

Hayes~\cite{Hayes 1950} considered a transcendental equation
\begin{equation*}
	s = c\e^s
\end{equation*}
for an unknown $s \in \C$ and a given constant $c \in \R$ to investigate Eq.~\eqref{eq: TE} with $\tau = 1$ for real $a$ and $w$.
Since this is equivalent to $(-s)\e^{-s} = -c$, the set of all roots of the above equation coincides with $-W(-c)$.
Hayes~\cite{Hayes 1950} did not use the concept of the Lambert $W$ function, but the study is considered to be the investigation of the principal complex branch $W_0(\zeta)$ for real $\zeta$.
Based on this approach, the following result has been obtained by Hayes~\cite[Lemma 2]{Hayes 1950}.

\begin{theorem}[\cite{Hayes 1950}]\label{thm: Re(W(zeta)) < sigma, real zeta}
Let $\zeta \in \R$ and $\sigma \in \R$ be given.
Then $\Re(z) < \sigma$ for all $z \in W(\zeta)$ if and only if
\begin{equation*}
	 \sigma > -1 \amd {-R(-\sigma)}\e^{\sigma} < \zeta < \sigma\e^{\sigma}
\end{equation*}
hold.
\end{theorem}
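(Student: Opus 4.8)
The plan is to reduce the statement to the already-established stability criterion for real coefficients, Corollary~\ref{cor: TE, real a and w}, by exploiting the correspondence between $W(\zeta)$ and the roots of Eq.~\eqref{eq: TE} recorded in \eqref{eq: set of all roots of z + a - we^{-tau z} = 0}. Specializing that formula to $\tau = 1$, the set of all roots of $z + a - w\e^{-z} = 0$ equals $\Set{u - a}{u \in W(w\e^{a})}$. This is the identity I would lean on throughout.

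First I would fix the parameters. Given $\zeta \in \R$ and $\sigma \in \R$, I set $a \coloneqq \sigma$ and $w \coloneqq \zeta\e^{-\sigma}$, so that $a$ and $w$ are both real and $w\e^{a} = \zeta$. Then the set of all roots of Eq.~\eqref{eq: TE} with $\tau = 1$ and these coefficients is exactly $\Set{u - \sigma}{u \in W(\zeta)}$. Since a root $u - \sigma$ has negative real part precisely when $\Re(u) < \sigma$, the condition ``$\Re(z) < \sigma$ for all $z \in W(\zeta)$'' is equivalent to the assertion that all the roots of Eq.~\eqref{eq: TE} with $\tau = 1$, $a = \sigma$, and $w = \zeta\e^{-\sigma}$ have negative real parts, i.e., to $1 \in T(\sigma, \zeta\e^{-\sigma})$.

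Next I would apply Corollary~\ref{cor: TE, real a and w} with $\tau = 1$. Because $\sigma$ and $\zeta\e^{-\sigma}$ are real, the corollary is directly applicable and states that all roots have negative real parts if and only if $\sigma > -1$ and $-R(-\sigma) < \zeta\e^{-\sigma} < \sigma$. Multiplying the two-sided inequality by $\e^{\sigma} > 0$ then yields exactly ${-R(-\sigma)}\e^{\sigma} < \zeta < \sigma\e^{\sigma}$, which together with $\sigma > -1$ is the asserted condition. This completes the equivalence.

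The argument is essentially a change of variables, so I do not expect a deep obstacle; the points requiring care are bookkeeping rather than analysis. One should confirm the correspondence at the level of sets, namely that every $u \in W(\zeta)$ produces a root of Eq.~\eqref{eq: TE} and conversely, so that the universally quantified statements on the two sides genuinely match. The only degenerate situation is $\zeta = 0$, where $W(0) = \{0\}$ and $w = 0$; here I would check directly that both sides collapse to $\sigma > 0$, which is consistent since $R(-\sigma) > 0$ for $\sigma > -1$ by Lemma~\ref{lem: property of R}, so the lower bound ${-R(-\sigma)}\e^{\sigma} < 0$ imposes no extra constraint.
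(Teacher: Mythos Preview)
Your reduction is correct and matches the paper's own argument: the paper does not give a standalone proof but states that Theorem~\ref{thm: Re(W(zeta)) < sigma, real zeta} is logically equivalent to Corollary~\ref{cor: TE, real a and w} via the change of variables in Appendix~\ref{sec: Lambert W function} (namely $z' = z - \sigma$, i.e., $a = \sigma$, $w = \zeta\e^{-\sigma}$, $\tau = 1$), which is precisely the substitution you carry out. Your handling of the degenerate case $\zeta = 0$ is also fine and consistent with the paper's conventions.
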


This is logically equivalent to Corollary~\ref{cor: TE, real a and w} by the discussions in Appendix~\ref{sec: Lambert W function}.

\section{Method by critical delay and stability region for real $a$ and imaginary $w$}\label{sec: method by critical delay for real a and imaginary w}

In this section, we study Eq.~\eqref{eq: TE} with real $a$ and complex $w$ and find the stability region via the method by critical delay.

We will use the following notation.

\begin{notation}
For each $\varphi \in (0, \pi)$, let $C(\argdot; \varphi) \colon [0, \varphi) \to \R$ be the function defined by
\begin{equation*}
	C(\theta; \varphi)
	\coloneqq \theta \cot(\theta - \varphi)
	= \theta \cdot \frac{\cos(\theta - \varphi)}{\sin(\theta - \varphi)}
\end{equation*}
for all $\theta \in [0, \varphi)$.
\end{notation}

\subsection{Property of the function \texorpdfstring{$C(\argdot; \varphi)$}{C(argdot; varphi)}}

We first study the case $\varphi \in (0, \pi/2]$.

\begin{lemma}\label{lem: C(argdot; varphi), 0 < varphi le pi/2}
Let $\varphi \in (0, \pi/2]$ be given.
Then the function $C(\argdot; \varphi) \colon [0, \varphi) \to \R$ is strictly monotonically decreasing and satisfies $\lim_{\theta \uparrow \varphi} C(\theta; \varphi) = -\infty$.
\end{lemma}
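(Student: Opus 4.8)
The plan is to separate the two assertions, dispatching the limit quickly and then treating the monotonicity, which is where the real work lies.

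First I would record the regularity needed for everything: for $\theta \in [0, \varphi)$ one has $\theta - \varphi \in [-\varphi, 0) \subseteq [-\pi/2, 0)$, so $\sin(\theta - \varphi) < 0$ never vanishes and $C(\argdot; \varphi)$ is $C^\infty$ on $[0, \varphi)$. The limit is then immediate: as $\theta \uparrow \varphi$ we have $\theta - \varphi \uparrow 0^-$, hence $\cot(\theta - \varphi) \to -\infty$ while $\theta \to \varphi > 0$, so $C(\theta; \varphi) = \theta \cot(\theta - \varphi) \to -\infty$.

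For monotonicity I would differentiate and reduce to a sign question. Writing $u \coloneqq \theta - \varphi$,
\[
	C'(\theta; \varphi) = \cot(\theta - \varphi) - \frac{\theta}{\sin^2(\theta - \varphi)} = \frac{\sin u \cos u - \theta}{\sin^2 u}.
\]
Since the denominator is positive, the whole problem collapses to showing that the numerator $N(\theta) \coloneqq \sin u \cos u - \theta = \tfrac{1}{2}\sin 2u - \theta$ is negative on $(0, \varphi)$; I expect this to be the crux. The naive estimate $\tfrac{1}{2}\sin 2u \le \tfrac{1}{2}$ settles large $\theta$ but fails near $\theta = 0$, so a sharper argument is needed there.

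To handle the numerator I would substitute $\theta = u + \varphi$ and recast the target inequality $\tfrac{1}{2}\sin 2u < u + \varphi$ as $g(u) < \varphi$, where $g(u) \coloneqq \tfrac{1}{2}\sin 2u - u$. The decisive observation is that $g'(u) = \cos 2u - 1 < 0$ for $u \in [-\varphi, 0)$ (as $2u \in (-\pi, 0)$ there), so $g$ is strictly decreasing and attains its maximum over $[-\varphi, 0)$ at the left endpoint, $g(-\varphi) = \varphi - \tfrac{1}{2}\sin 2\varphi$. Here the hypothesis $\varphi \in (0, \pi/2]$ enters decisively: it forces $2\varphi \in (0, \pi]$, hence $\sin 2\varphi \ge 0$ and $g(-\varphi) \le \varphi$. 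Consequently $g(u) < g(-\varphi) \le \varphi$ for $u \in (-\varphi, 0)$, i.e.\ $N(\theta) < 0$ and $C'(\theta; \varphi) < 0$ for all $\theta \in (0, \varphi)$; continuity of $C(\argdot; \varphi)$ on $[0, \varphi)$ then upgrades this to strict decrease on the whole interval. The main obstacle is precisely this sign of $N$ near $\theta = 0$, where the restriction $\varphi \le \pi/2$ (guaranteeing $\sin 2\varphi \ge 0$) is exactly what makes the endpoint bound work. In the boundary case $\varphi = \pi/2$ one has $N(0) = 0$, so $C'(0; \pi/2) = 0$; this single endpoint does not spoil strict monotonicity, and indeed there $C(\theta; \pi/2) = -\theta \tan\theta$ offers a clean independent check.
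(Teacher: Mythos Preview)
Your proof is correct and follows essentially the same route as the paper: both differentiate $C(\argdot;\varphi)$, reduce to the sign of the numerator $\tfrac12\sin(2\theta-2\varphi)-\theta$ (the paper writes it as $f(\theta)=\sin(2\theta-2\varphi)-2\theta$), and show it is negative on $(0,\varphi)$ by observing that its derivative $\cos(2\theta-2\varphi)-1$ is negative and that the left endpoint value is $-\tfrac12\sin 2\varphi\le 0$ precisely because $\varphi\in(0,\pi/2]$. Your substitution $u=\theta-\varphi$ is a cosmetic repackaging of the same computation; the one minor slip is that $2u$ ranges over $[-\pi,0)$ rather than $(-\pi,0)$ when $\varphi=\pi/2$, but this does not affect the conclusion since $\cos(-\pi)-1=-2<0$.
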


\begin{proof}
We have
\begin{align*}
	\der{\theta} \mspace{2mu} C(\theta; \varphi)
	&= \frac{\cos(\theta - \varphi)}{\sin(\theta - \varphi)} - \frac{\theta}{\sin^2(\theta - \varphi)} \\
	&= \frac{\sin(2\theta - 2\varphi) - 2\theta}{2\sin^2(\theta - \varphi)}
\end{align*}
for all $\theta \in [0, \varphi)$.
Let
\begin{equation*}
	f(\theta) \coloneqq \sin(2\theta - 2\varphi) - 2\theta
	\mspace{20mu}
	(\theta \in [0, \varphi)).
\end{equation*}
Then its derivative is 
\begin{equation*}
	f'(\theta)
	= 2(\cos(2\theta - 2\varphi) - 1)
	< 0.
\end{equation*}
Since $f(0) = -\sin(2\varphi) \le 0$, $f(\theta) < 0$ holds for all $\theta \in (0, \varphi)$.
Therefore, the monotonicity is obtained.
The limit is a consequence of $\lim_{\theta \uparrow 0} \cot\theta = -\infty$.
\end{proof}

Lemma~\ref{lem: C(argdot; varphi), 0 < varphi le pi/2} shows that for each given $\varphi \in (0, \pi/2]$, the function $C(\argdot; \varphi) \colon [0, \varphi) \to \R$ has its inverse function $C^{-1}(\argdot; \varphi) \colon (-\infty, 0] \to \R$ which is strictly monotonically decreasing and satisfies
\begin{equation}\label{eq: C^-1(argdot; varphi), 0 < varphi le pi/2}
	\lim_{r \to -\infty} C^{-1}(r; \varphi) = \varphi
	\amd
	C^{-1}(0; \varphi) = 0.
\end{equation}

To study the case $\varphi \in (\pi/2, \pi)$, we need to introduce the following notation.

\begin{definition}
For each $\varphi \in (\pi/2, \pi)$, let $\theta = S(\varphi)$ be the unique solution of
\begin{equation*}
	\sin(2\theta - 2\varphi) = 2\theta
\end{equation*}
in $(0, \varphi)$, or equivalently, $S(\varphi) \in (0, \varphi)$ satisfies
\begin{equation*}
	\sin(S(\varphi) - \varphi)\cos(S(\varphi) - \varphi) = S(\varphi).
\end{equation*}
\end{definition}

We note that the above $S(\varphi)$ coincides with $\phi^*$ used in \cite[Lemma 2]{Sakata 1998}.
The above unique existence of $S(\varphi)$ is ensured by the following lemma.

\begin{lemma}\label{lem: sin(2theta - 2varphi) - 2theta}
Let $\varphi \in (\pi/2, \pi)$ be given.
Then the following statements hold:
\begin{itemize}
\item $\sin(2\theta - 2\varphi) - 2\theta > 0$ for all $\theta \in [0, S(\varphi))$,
\item $\sin(2S(\varphi) - 2\varphi) = 2S(\varphi)$, and
\item $\sin(2\theta - 2\varphi) - 2\theta < 0$ for all $\theta \in (S(\varphi), \varphi)$.
\end{itemize}
\end{lemma}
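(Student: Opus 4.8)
The plan is to analyze the single function $f(\theta) \coloneqq \sin(2\theta - 2\varphi) - 2\theta$ on the closed interval $[0, \varphi]$, exactly the auxiliary function already used in the proof of Lemma~\ref{lem: C(argdot; varphi), 0 < varphi le pi/2}, but now exploiting that the hypothesis $\varphi \in (\pi/2, \pi)$ reverses the sign of $f(0)$. The three asserted statements are nothing more than the sign pattern of $f$, so the whole lemma reduces to showing that $f$ is strictly decreasing and changes sign exactly once.

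First I would record the two endpoint values. At $\theta = 0$ we have $f(0) = \sin(-2\varphi) = -\sin(2\varphi)$, and since $\varphi \in (\pi/2, \pi)$ forces $2\varphi \in (\pi, 2\pi)$, the sine is negative, so $f(0) > 0$. At the other endpoint, $f(\varphi) = \sin 0 - 2\varphi = -2\varphi < 0$. Hence $f$ changes sign on $[0, \varphi]$. Next I would establish strict monotonicity: differentiating gives $f'(\theta) = 2\bigl( \cos(2\theta - 2\varphi) - 1 \bigr) \le 0$, and this inequality is in fact strict throughout $[0, \varphi)$, because $\cos(2\theta - 2\varphi) = 1$ would require $2\theta - 2\varphi \in 2\pi\Z$, which is impossible since $2\theta - 2\varphi \in (-2\pi, 0)$ for $\theta \in [0, \varphi)$ when $\varphi < \pi$.

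Strict monotonic decrease together with $f(0) > 0 > f(\varphi)$ then yields, by the intermediate value theorem, a unique zero $S(\varphi) \in (0, \varphi)$, which is precisely the point introduced in the Definition; this simultaneously justifies the well-definedness of $S(\varphi)$ promised in the text. Reading off the sign of the strictly decreasing $f$ on either side of its single root gives $f(\theta) > 0$ on $[0, S(\varphi))$, $f(S(\varphi)) = 0$, \amd $f(\theta) < 0$ on $(S(\varphi), \varphi)$, which are the three displayed assertions. I do not expect any genuine obstacle here; the only step that requires care — and the sole place where the assumption $\varphi > \pi/2$ rather than $\varphi \le \pi/2$ enters — is the sign of $f(0)$, since it is exactly this reversal relative to Lemma~\ref{lem: C(argdot; varphi), 0 < varphi le pi/2} that produces an interior root $S(\varphi)$ instead of $f$ remaining negative throughout.
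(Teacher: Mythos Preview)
Your argument is correct and follows essentially the same route as the paper: define $f(\theta) = \sin(2\theta - 2\varphi) - 2\theta$, compute $f'(\theta) = 2(\cos(2\theta - 2\varphi) - 1) < 0$, check the endpoint signs $f(0) = -\sin(2\varphi) > 0$ and $f(\varphi) = -2\varphi < 0$, and conclude by the intermediate value theorem and strict monotonicity. The only cosmetic difference is that the paper works on the half-open interval $[0,\varphi)$ and uses $\lim_{\theta \uparrow \varphi} f(\theta)$ instead of $f(\varphi)$, while you close the interval; your slightly more explicit justification of the strictness of $f' < 0$ is a welcome addition.
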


\begin{proof}
Let $f(\theta) \coloneqq \sin(2\theta - 2\varphi) - 2\theta$ for all $\theta \in [0, \varphi)$.
Then
\begin{equation*}
	f'(\theta)
	= 2(\cos(2\theta - 2\varphi) - 1)
	< 0.
\end{equation*}
Since
\begin{equation*}
	f(0) = -\sin(2\varphi) > 0 \amd \lim_{\theta \uparrow \varphi} f(\theta) = -2\varphi < 0,
\end{equation*}
there exists a unique $\theta_\varphi \in (0, \varphi)$ such that $f(\theta_\varphi) = 0$ by the intermediate value theorem.
By the monotonicity, $\theta_\varphi = S(\varphi)$ holds.
Then $f(\theta) > 0$ for all $\theta \in [0, S(\varphi))$ and $f(\theta) < 0$ for all $\theta \in (S(\varphi), \varphi)$.
\end{proof}

By using the value $S(\varphi)$ for $\varphi \in (\pi/2, \pi)$, we obtain the following lemma.

\begin{lemma}\label{lem: C(argdot; varphi), pi/2 < varphi < pi}
Let $\varphi \in (\pi/2, \pi)$ be given.
Then the function $C(\argdot; \varphi) \colon [0, \varphi) \to \R$ is strictly monotonically increasing on $[0, S(\varphi)]$ and is strictly monotonically decreasing on $[S(\varphi), \varphi)$.
Furthermore, the function attains its maximum
\begin{equation*}
	M(\varphi) \coloneqq \cos^2(S(\varphi) - \varphi)
\end{equation*}
at $S(\varphi)$ and $\lim_{\theta \uparrow \varphi} C(\theta; \varphi) = -\infty$ holds.
\end{lemma}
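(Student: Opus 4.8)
The plan is to reduce the whole statement to a sign analysis of the auxiliary function $f(\theta) \coloneqq \sin(2\theta - 2\varphi) - 2\theta$ that already governs the proof of Lemma~\ref{lem: C(argdot; varphi), 0 < varphi le pi/2}. First I would differentiate $C(\argdot; \varphi)$; the computation is verbatim the one carried out for $\varphi \in (0, \pi/2]$ and produces
\[
	\der{\theta} \mspace{2mu} C(\theta; \varphi)
	= \frac{\cos(\theta - \varphi)}{\sin(\theta - \varphi)} - \frac{\theta}{\sin^2(\theta - \varphi)}
	= \frac{\sin(2\theta - 2\varphi) - 2\theta}{2\sin^2(\theta - \varphi)}
	= \frac{f(\theta)}{2\sin^2(\theta - \varphi)}.
\]
On the interval $[0, \varphi)$ the argument $\theta - \varphi$ ranges in $[-\varphi, 0) \subset (-\pi, 0)$ because $\varphi < \pi$, so $\sin(\theta - \varphi) \ne 0$ and the denominator $2\sin^2(\theta - \varphi)$ is strictly positive. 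Hence the sign of $\der{\theta} \, C(\theta; \varphi)$ coincides with the sign of $f(\theta)$.

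Next I would simply quote Lemma~\ref{lem: sin(2theta - 2varphi) - 2theta}, which says exactly that $f(\theta) > 0$ on $[0, S(\varphi))$, that $f(S(\varphi)) = 0$, and that $f(\theta) < 0$ on $(S(\varphi), \varphi)$. Combining this with the sign identity for the derivative shows that $C(\argdot; \varphi)$ is strictly increasing on $[0, S(\varphi)]$ and strictly decreasing on $[S(\varphi), \varphi)$, which settles both monotonicity assertions and, in particular, forces the maximum to be attained at $\theta = S(\varphi)$.

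To evaluate the maximal value I would use the defining relation $\sin(S(\varphi) - \varphi)\cos(S(\varphi) - \varphi) = S(\varphi)$ of $S(\varphi)$. Substituting it into
\[
	C(S(\varphi); \varphi)
	= S(\varphi) \, \frac{\cos(S(\varphi) - \varphi)}{\sin(S(\varphi) - \varphi)}
\]
lets the factor $S(\varphi)$ cancel the sine in the denominator, leaving $\cos^2(S(\varphi) - \varphi) = M(\varphi)$, as claimed. For the final limit, as $\theta \uparrow \varphi$ one has $\sin(\theta - \varphi) \uparrow 0$ through negative values while $\cos(\theta - \varphi) \to 1$, so $\cot(\theta - \varphi) \to -\infty$; multiplying by $\theta \to \varphi > 0$ gives $\lim_{\theta \uparrow \varphi} C(\theta; \varphi) = -\infty$.

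I do not expect a genuine obstacle in this lemma: the derivative formula and the sign behaviour of $f$ are both inherited from the two preceding lemmas, so the only new content is the one-line algebraic simplification of the maximum using the defining equation for $S(\varphi)$. The single point deserving a moment of care is the nonvanishing of $\sin^2(\theta - \varphi)$ on $[0, \varphi)$, which is precisely where the hypothesis $\varphi < \pi$ enters.
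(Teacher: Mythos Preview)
Your proof is correct and follows essentially the same route as the paper: compute the derivative as $f(\theta)/\bigl(2\sin^2(\theta-\varphi)\bigr)$, invoke Lemma~\ref{lem: sin(2theta - 2varphi) - 2theta} for the sign of $f$, evaluate the maximum via the defining relation $S(\varphi) = \sin(S(\varphi)-\varphi)\cos(S(\varphi)-\varphi)$, and read off the limit from $\cot(\theta-\varphi)\to -\infty$. The only differences are that you make the positivity of the denominator explicit and spell out the limit computation in slightly more detail than the paper does.
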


\begin{proof}
By the proof of Lemma~\ref{lem: C(argdot; varphi), 0 < varphi le pi/2}, we have
\begin{equation*}
	\der{\theta} \mspace{2mu} C(\theta; \varphi)
	= \frac{\sin(2\theta - 2\varphi) - 2\theta}{2\sin^2(\theta - \varphi)}
\end{equation*}
for all $\theta \in [0, \varphi)$.
Therefore, the monotonicity properties stated in Lemma~\ref{lem: C(argdot; varphi), pi/2 < varphi < pi} follow by Lemma~\ref{lem: sin(2theta - 2varphi) - 2theta}.
The maximum can be calculated by using the relation $S(\varphi) = \sin(S(\varphi) - \varphi)\cos(S(\varphi) - \varphi)$.
The limit is a consequence of $\lim_{\theta \uparrow 0} \cot\theta = -\infty$.
\end{proof}

\begin{remark}
Since $C(\varphi - (\pi/2); \varphi) = 0$, we have
\begin{equation*}
	S(\varphi) < \varphi - \frac{\pi}{2}
\end{equation*}
for all $\varphi \in (\pi/2, \pi)$.
\end{remark}

See Fig.~\ref{fig: C(theta; varphi)} for the graphs of the function $C(\argdot; \varphi) \colon [0, \varphi) \to \R$ for each cases of $\varphi \in (0, \pi/2]$ and $\varphi \in (\pi/2, \pi)$.

\begin{figure}[tbp]
\centering
	\begin{subfigure}{0.4\columnwidth}
	\centering
		\begin{tikzpicture}
		\draw[-latex, semithick] (-1,0) -- (2.5,0) node [right]{$\theta$};
		\draw[-latex, semithick] (0,-3) -- (0,1) node [above]{$C(\theta; \varphi)$};
		\draw (0,0) node [below left]{O};
		\draw[densely dashed] (pi/2,-3) node [right]{$\theta = \varphi$} -- (pi/2,1);
		\draw[smooth, samples=100, variable=\theta, domain=0:1.19] plot (\theta, {\theta*cos((\theta - pi/2) r)/sin((\theta - pi/2) r)});
		\end{tikzpicture}
	\caption{$\varphi \in (0, \pi/2]$}
	\end{subfigure}
	\begin{subfigure}{0.4\columnwidth}
	\centering
		\begin{tikzpicture}
		\draw[-latex, semithick] (-1,0) -- (3,0) node[right]{$\theta$};
		\draw[-latex, semithick] (0,-3) -- (0,1) node[above]{$C(\theta; \varphi)$};
		\draw (0,0) node [below left]{O};
		\draw (pi/4,0) node [above right]{$\varphi - \frac{\pi}{2}$};
		\filldraw (pi/4,0) circle [radius=1.2pt];
		\draw[densely dashed] (3*pi/4,-3) node [right]{$\theta = \varphi$} -- (3*pi/4,1);
		\draw[smooth, samples=100, variable=\theta, domain=0:1.82] plot (\theta, {\theta*cos((\theta - 3*pi/4) r)/sin((\theta - 3*pi/4) r)});
		\end{tikzpicture}
	\caption{$\varphi \in (\pi/2, \pi)$. The function attains its maximum $M(\varphi) = \cos^2(S(\varphi) - \varphi)$ at $\theta = S(\varphi)$.}
	\end{subfigure}
\caption{Graphs of $C(\theta; \varphi) = \theta\cot(\theta - \varphi)$ for $\varphi \in (0, \pi/2]$ and $\varphi \in (\pi/2, \pi)$}
\label{fig: C(theta; varphi)}
\end{figure}
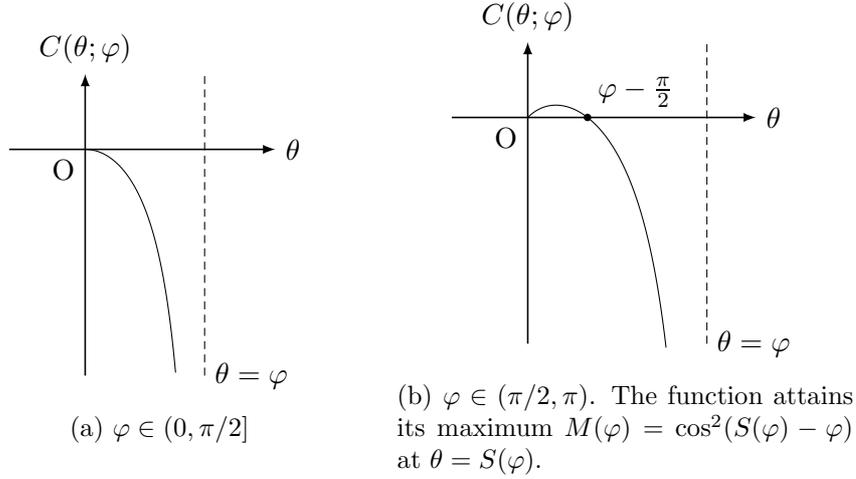

Based on Lemma~\ref{lem: C(argdot; varphi), pi/2 < varphi < pi}, we introduce the following.

\begin{definition}
For each given $\varphi \in (\pi/2, \pi)$, let $C_1(\argdot; \varphi)$ and $C_2(\argdot; \varphi)$ be the restrictions of the function $C(\argdot; \varphi) \colon [0, \varphi) \to \R$ to the intervals $[0, S(\varphi)]$ and $[S(\varphi), \varphi)$, respectively.
\end{definition}

Then Lemma~\ref{lem: C(argdot; varphi), pi/2 < varphi < pi} shows that the inverse functions $C_1^{-1}(\argdot; \varphi) \colon [0, M(\varphi)] \to \R$ and $C_2^{-1}(\argdot; \varphi) \colon (-\infty, M(\varphi)] \to \R$ have the following properties:
\begin{itemize}
\item $C_1^{-1}(\argdot; \varphi)$ is strictly monotonically increasing and satisfies
\begin{equation}\label{eq: C_1^-1(argdot; varphi)}
	C_1^{-1}(0; \varphi) = 0
	\amd
	C_1^{-1}(M(\varphi); \varphi) = S(\varphi).
\end{equation}
\item $C_2^{-1}(\argdot; \varphi)$ is strictly monotonically decreasing and satisfies
\begin{equation}\label{eq: C_2^-1(argdot; varphi)}
	\lim_{r \to -\infty} C_2^{-1}(r; \varphi) = \varphi
	\amd
	C_2^{-1}(M(\varphi); \varphi) = S(\varphi).
\end{equation}
\end{itemize}

\subsection{Inequality on critical delay and stability region for the case $\abs{\Arg(w)} \in (0, \pi/2]$}

We use the function introduced below.

\begin{definition}
For each $\varphi \in (0, \pi/2]$, let $R(\argdot; \varphi) \colon (-\infty, 0] \to \R$ be the function defined by
\begin{equation}\label{eq: R(r; varphi)}
	R(r; \varphi)
	\coloneqq -\frac{C^{-1}(r; \varphi)}{\sin(C^{-1}(r; \varphi) - \varphi)}
	= -\frac{r}{\cos(C^{-1}(r; \varphi) - \varphi)}.
\end{equation}
Here we have
\begin{equation*}
	-\frac{\pi}{2} \le -\varphi
	< C^{-1}(r; \varphi) - \varphi
	< 0
\end{equation*}
for all $r < 0$.
\end{definition}

The following lemma gives qualitative properties of the function $R(\argdot; \varphi) \colon (-\infty, 0] \to \R$ for each given $\varphi \in (0, \pi/2]$.

\begin{lemma}\label{lem: property of R(argdot; varphi)}
Let $\varphi \in (0, \pi/2]$ be given.
Then the function $R(\argdot; \varphi) \colon (-\infty, 0] \to \R$ is strictly monotonically decreasing and satisfies $\lim_{r \to -\infty} R(r; \varphi) = \infty$ and $R(0; \varphi) = 0$.
Furthermore,
\begin{equation*}
	\lim_{r \to -\infty} \frac{R(r; \varphi)}{|r|} = 1
	\amd
	\lim_{r \uparrow 0} \frac{R(r; \varphi)}{|r|} = \frac{1}{\cos\varphi}
\end{equation*}
hold.
Here we are interpreting that $1/{\cos{\varphi}} = \infty$ when $\varphi = \pi/2$.
\end{lemma}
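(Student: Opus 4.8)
The plan is to reduce everything to the behavior of a single elementary function obtained by composing $R(\argdot; \varphi)$ with the inverse $C^{-1}(\argdot; \varphi)$. Writing $\theta := C^{-1}(r; \varphi)$, which ranges over $[0, \varphi)$ as $r$ ranges over $(-\infty, 0]$ (with $\theta = 0$ at $r = 0$ and $\theta \uparrow \varphi$ as $r \to -\infty$ by \eqref{eq: C^-1(argdot; varphi), 0 < varphi le pi/2}), and using $\sin(\theta - \varphi) = -\sin(\varphi - \theta)$, the first representation in \eqref{eq: R(r; varphi)} becomes
\begin{equation*}
	R(r; \varphi) = \frac{\theta}{\sin(\varphi - \theta)} =: g(\theta).
\end{equation*}
All four assertions will follow by reading off properties of $g$ and its composition with $C^{-1}(\argdot; \varphi)$.

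First I would establish the monotonicity. Because $C^{-1}(\argdot; \varphi)$ is strictly monotonically decreasing on $(-\infty, 0]$ by Lemma~\ref{lem: C(argdot; varphi), 0 < varphi le pi/2}, it suffices to show that $g$ is strictly monotonically increasing on $[0, \varphi)$, since the composition of an increasing map with a decreasing map is decreasing. A direct differentiation gives
\begin{equation*}
	g'(\theta) = \frac{\sin(\varphi - \theta) + \theta\cos(\varphi - \theta)}{\sin^2(\varphi - \theta)},
\end{equation*}
and for $\theta \in [0, \varphi)$ we have $\varphi - \theta \in (0, \varphi] \subseteq (0, \pi/2]$, so $\sin(\varphi - \theta) > 0$ and $\cos(\varphi - \theta) \ge 0$, whence the numerator is positive. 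The boundary values are then immediate: $R(0; \varphi) = g(0) = 0$, while as $r \to -\infty$ we have $\theta \uparrow \varphi$, so $\sin(\varphi - \theta) \downarrow 0^+$ with $\theta \to \varphi > 0$, giving $\lim_{r \to -\infty} R(r; \varphi) = \infty$.

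For the two asymptotic ratios I would switch to the second representation in \eqref{eq: R(r; varphi)}, namely $R(r; \varphi) = -r/\cos(\theta - \varphi)$. Since $r < 0$ gives $\abs{r} = -r$, this yields the clean identity
\begin{equation*}
	\frac{R(r; \varphi)}{\abs{r}} = \frac{1}{\cos(\theta - \varphi)},
\end{equation*}
valid for all $r < 0$. Letting $r \to -\infty$ sends $\theta \to \varphi$, hence $\cos(\theta - \varphi) \to 1$ and the ratio tends to $1$; letting $r \uparrow 0$ sends $\theta \to 0$, hence $\cos(\theta - \varphi) \to \cos\varphi$ and the ratio tends to $1/{\cos\varphi}$. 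In the borderline case $\varphi = \pi/2$ this last limit is correctly interpreted as $+\infty$, since then $\cos(\theta - \pi/2) = \sin\theta \to 0^+$ as $\theta \to 0$.

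The only point requiring care — and the place where the standing hypothesis $\varphi \in (0, \pi/2]$ is genuinely used — is the uniform sign of $g'$ on $[0, \varphi)$, i.e.\ that $\cos(\varphi - \theta) \ge 0$ throughout; this holds precisely because $\varphi \le \pi/2$ forces $\varphi - \theta \le \pi/2$. This is exactly the reason the regime $\varphi \in (\pi/2, \pi)$ loses monotonicity and must instead be handled through the turning point $S(\varphi)$ introduced earlier. Beyond this, the argument is just bookkeeping with the already-established behavior of $C^{-1}(\argdot; \varphi)$.
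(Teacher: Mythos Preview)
Your proof is correct and follows essentially the same route as the paper: both reduce the monotonicity of $R(\argdot;\varphi)$ to that of the auxiliary map $\theta \mapsto -\theta/\sin(\theta-\varphi)$ composed with the decreasing $C^{-1}(\argdot;\varphi)$, and both read off the asymptotic ratios from the identity $R(r;\varphi)/|r| = 1/\cos\bigl(C^{-1}(r;\varphi)-\varphi\bigr)$. The only cosmetic difference is that the paper factors the derivative as $-\bigl(1-\theta\cot(\theta-\varphi)\bigr)/\sin(\theta-\varphi)$ and argues positivity via $\theta\cot(\theta-\varphi)<0$ together with $\sin(\theta-\varphi)<0$, whereas you check the signs of $\sin(\varphi-\theta)$ and $\cos(\varphi-\theta)$ directly.
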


\begin{proof}
We have
\begin{align*}
	\frac{\mathrm{d}}{\mathrm{d}\theta} \left( -\frac{\theta}{\sin(\theta - \varphi)} \right)
	&= -\frac{\sin(\theta - \varphi) - \theta\cos(\theta - \varphi)}{\sin^2(\theta - \varphi)} \\
	&= -\frac{1 - \theta\cot(\theta - \varphi)}{\sin(\theta - \varphi)} \\
	&> 0
\end{align*}
for all $\theta \in (0, \varphi)$ because $\theta\cot(\theta - \varphi) < 0$ and $\sin(\theta - \varphi) < 0$ hold.
Therefore, the monotonicity property of the function $R(\argdot; \varphi)$ follows by the monotonicity property of the function $C^{-1}(\argdot; \varphi)$.
The remaining properties are direct consequences of \eqref{eq: R(r; varphi)} and \eqref{eq: C^-1(argdot; varphi), 0 < varphi le pi/2}, where
\begin{equation*}
	\frac{R(r; \varphi)}{\abs{r}}
	= \frac{1}{\cos(C^{-1}(r; \varphi) - \varphi)}
\end{equation*}
holds for $r < 0$.
This completes the proof.
\end{proof}

\begin{theorem}\label{thm: tau_c(a, w) > 1, real a, Re(w) ge 0}
Let $a \in \R$ and $w \in \C \setminus \R$ be given so that $(a, w) \in D_\mathrm{c}$.
Suppose $\varphi \coloneqq \abs{\Arg(w)} \in (0, \pi/2]$.
Then $\tau_\mathrm{c}(a, w) > 1$ if and only if
\begin{equation*}
	a > 0
	\amd
	\abs{w} < R(-a; \varphi)
\end{equation*}
hold.
\end{theorem}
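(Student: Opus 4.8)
The plan is to mirror the proof of Theorem~\ref{thm: tau_c(a, w) > 1, real a and w}, reducing $\tau_\mathrm{c}(a,w) > 1$ to a one-variable inequality in $\Omega \coloneqq \Omega(a,w) = \sqrt{\abs{w}^2 - a^2}$ governed by the function $C(\argdot;\varphi)$, and then solving for $\abs{w}$. First I would record the consequences of $(a,w) \in D_\mathrm{c}$: it gives $\abs{a} < \abs{w}$, hence $\Omega > 0$, and since $\Re(w) = \abs{w}\cos\varphi \ge 0$ for $\varphi \in (0,\pi/2]$, it gives $a > \Re(w) \ge 0$, so $a > 0$ holds automatically. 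Using expression~\eqref{eq: tau_c(a, w), |Arg(w)|, arccot}, the inequality $\tau_\mathrm{c}(a,w) > 1$ becomes $\arccot(a/\Omega) < \varphi - \Omega$.

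Because the left-hand side is positive, this inequality forces $\Omega < \varphi$; conversely, once $\Omega < \varphi$ we have $\varphi - \Omega \in (0,\pi)$, so I would apply the strictly decreasing map $\cot|_{(0,\pi)}$ to both sides to obtain the equivalent inequality $a > \Omega\cot(\varphi - \Omega)$. Since $\cot$ is odd, $\Omega\cot(\varphi - \Omega) = -\Omega\cot(\Omega - \varphi) = -C(\Omega;\varphi)$, so this is exactly $C(\Omega;\varphi) > -a$.

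Next I would invoke Lemma~\ref{lem: C(argdot; varphi), 0 < varphi le pi/2}, by which $C(\argdot;\varphi)\colon [0,\varphi) \to \R$ is a strictly decreasing bijection onto $(-\infty,0]$ with $C(0;\varphi)=0$, so that $C^{-1}(\argdot;\varphi)$ is available. As $\Omega > 0$ gives $C(\Omega;\varphi) < 0$, the condition $C(\Omega;\varphi) > -a$ can hold only when $-a < 0$, which recovers $a > 0$; and for such $a$ it is equivalent, by monotonicity, to $\Omega < C^{-1}(-a;\varphi)$ (the auxiliary constraint $\Omega < \varphi$ being then automatic, since $C^{-1}(-a;\varphi) < \varphi$).

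Finally I would convert this into a bound on $\abs{w}$. Squaring $\Omega < C^{-1}(-a;\varphi)$ (both sides nonnegative) yields $\abs{w}^2 < a^2 + C^{-1}(-a;\varphi)^2$. Writing $\theta^* \coloneqq C^{-1}(-a;\varphi)$, the relation $C(\theta^*;\varphi) = -a$ gives $a = -\theta^*\cot(\theta^* - \varphi)$, whence $a^2 + (\theta^*)^2 = (\theta^*)^2/\sin^2(\theta^*-\varphi) = R(-a;\varphi)^2$ by definition~\eqref{eq: R(r; varphi)}; taking positive square roots gives $\abs{w} < R(-a;\varphi)$. The computations are elementary, so the only delicate points — the main obstacle — are the domain bookkeeping, namely that $\Omega < \varphi$ is precisely what makes $\cot$ applicable and that $a > 0$ emerges from the range $(-\infty,0]$ of $C(\argdot;\varphi)$, together with the sign tracking in the last identity, where $\sin(\theta^*-\varphi) < 0$ guarantees $R(-a;\varphi) > 0$ so that the square-root step is justified.
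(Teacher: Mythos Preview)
Your proof is correct and follows essentially the same approach as the paper's: rewrite $\tau_\mathrm{c}(a,w)>1$ via expression~\eqref{eq: tau_c(a, w), |Arg(w)|, arccot}, apply the strictly decreasing $\cot|_{(0,\pi)}$ to reduce to $C(\Omega;\varphi) > -a$, invoke Lemma~\ref{lem: C(argdot; varphi), 0 < varphi le pi/2} to invert this as $\Omega < C^{-1}(-a;\varphi)$, and square to reach $\abs{w} < R(-a;\varphi)$. The only cosmetic difference is that the paper first uses identity~\eqref{eq: arccot(x) + arccot(-x) = pi} to pass to $\arccot(-a/X)$ before applying $\cot$, whereas you apply $\cot$ directly and invoke the oddness of $\cot$ afterward; your explicit domain bookkeeping (in particular the observation that $a>0$ is already forced by $(a,w)\in D_\mathrm{c}$ when $\varphi\le\pi/2$) is a helpful addition that the paper's terser proof leaves implicit.
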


\begin{proof}
The inequality $\tau_\mathrm{c}(a, w) > 1$ becomes
\begin{equation*}
	\sqrt{\abs{w}^2 - a^2} - \varphi + \pi
	< \pi - \arccot{\left( \frac{a}{\sqrt{\abs{w}^2 - a^2}} \right)}
	= \arccot{\left( -\frac{a}{\sqrt{\abs{w}^2 - a^2}} \right)}
\end{equation*}
by the expression of $\tau_\mathrm{c}(a, w)$ and identity~\eqref{eq: arccot(x) + arccot(-x) = pi}.
Let $X(a, w) \coloneqq \sqrt{\abs{w}^2 - a^2}$.
Since $\cot|_{(0, \pi)} \colon (0, \pi) \to \R$ is strictly monotonically decreasing, the above inequality can be solved as
\begin{equation*}
	a > 0 \amd X(a, w) < C^{-1}(-a; \varphi).
\end{equation*}
By solving the last inequality with respect to $\abs{w}$, we obtain
\begin{align*}
	\abs{w}^2
	&< a^2 + C^{-1}(-a; \varphi)^2 \\
	&= C^{-1}(-a; \varphi)^2 \bigl[ \cot^2(C^{-1}(-a; \varphi) - \varphi) + 1 \bigr],
\end{align*}
which is equivalent to $\abs{w} < R(-a; \varphi)$.
\end{proof}

\begin{corollary}[\cite{Sakata 1998}]\label{cor: TE, real a, Re(w) ge 0}
Suppose $a \in \R$, $w \in \C \setminus \R$, and $\varphi \coloneqq \abs{\Arg(w)} \in (0, \pi/2]$.
Then all the roots of Eq.~\eqref{eq: TE} have negative real parts if and only if 
\begin{equation*}
	a > 0
	\amd
	\abs{w} < \frac{1}{\tau}R(-\tau a; \varphi)
\end{equation*}
hold.
\end{corollary}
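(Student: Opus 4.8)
The plan is to run the method by critical delay in the same manner as the proof of Corollary~\ref{cor: TE, real a and w}, replacing the real-coefficient input by its $\varphi$-dependent analogue. By the method by critical delay, i.e.\ Theorem~\ref{thm: T(a, w), real a} together with the scaling identity~\eqref{eq: tau_c(a tau, w tau)}, all roots of Eq.~\eqref{eq: TE} have negative real parts if and only if either \textup{(I)} $a \ge \abs{w}$ and $a > \Re(w)$, or \textup{(II)} $(a, w) \in D_\mathrm{c}$ and $\tau_\mathrm{c}(\tau a, \tau w) > 1$. First I would rewrite (II): since $D_\mathrm{c}$ is a linear cone and $\tau > 0$, we have $(\tau a, \tau w) \in D_\mathrm{c}$ exactly when $(a, w) \in D_\mathrm{c}$, while $\abs{\Arg(\tau w)} = \abs{\Arg(w)} = \varphi \in (0, \pi/2]$ and $\tau w \in \C \setminus \R$. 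Hence Theorem~\ref{thm: tau_c(a, w) > 1, real a, Re(w) ge 0} applied to $(\tau a, \tau w)$ converts $\tau_\mathrm{c}(\tau a, \tau w) > 1$ into $\tau a > 0$ and $\tau\abs{w} < R(-\tau a; \varphi)$, that is, $a > 0$ and $\abs{w} < \tfrac{1}{\tau}R(-\tau a; \varphi)$; so (II) becomes the conjunction of $(a, w) \in D_\mathrm{c}$ with these two inequalities.

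It then remains to prove that the union of (I) and the rewritten (II) equals the set $\Set{(a, w)}{a > 0,\ \abs{w} < \tfrac{1}{\tau}R(-\tau a; \varphi)}$. The engine for this is the identity $\tfrac{1}{\tau}R(-\tau a; \varphi) = a/{\cos(C^{-1}(-\tau a; \varphi) - \varphi)}$ read off from \eqref{eq: R(r; varphi)} for $a > 0$, together with $C^{-1}(-\tau a; \varphi) \in (0, \varphi)$ from \eqref{eq: C^-1(argdot; varphi), 0 < varphi le pi/2}. From these I would extract two facts: since $\lvert C^{-1}(-\tau a; \varphi) - \varphi\rvert < \varphi \le \pi/2$ and cosine is strictly decreasing on $[0, \pi/2]$, we get $\cos(C^{-1}(-\tau a; \varphi) - \varphi) \in (\cos\varphi, 1)$, whence $\tfrac{1}{\tau}R(-\tau a; \varphi) > a$ and, when $\varphi < \pi/2$, $\tfrac{1}{\tau}R(-\tau a; \varphi) < a/{\cos\varphi}$.

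With these facts the two inclusions are routine. For the forward inclusion, a point of (I) satisfies $a \ge \abs{w} > 0$ (as $w \notin \R$), so $\abs{w} \le a < \tfrac{1}{\tau}R(-\tau a; \varphi)$ and it lies in the target set, while a point of the rewritten (II) lies there by definition. For the reverse inclusion, assume $a > 0$ and $\abs{w} < \tfrac{1}{\tau}R(-\tau a; \varphi)$ and split on the size of $\abs{w}$: if $\abs{w} \le a$ then $\Re(w) = \abs{w}\cos\varphi < a$ (strictly, since $\cos\varphi < 1$ for $\varphi < \pi/2$ and $\Re(w) = 0$ for $\varphi = \pi/2$), so (I) holds; if $\abs{w} > a$ then $\Re(w) = \abs{w}\cos\varphi < a$ as well---immediate when $\varphi = \pi/2$, and otherwise because $\abs{w} < \tfrac{1}{\tau}R(-\tau a; \varphi) < a/{\cos\varphi}$---so $\Re(w) < a < \abs{w}$ puts $(a, w)$ in $D_\mathrm{c}$ and the rewritten (II) holds.

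I expect the genuine obstacle to be this final combination rather than either cited theorem: one must check that the single inequality $\abs{w} < \tfrac{1}{\tau}R(-\tau a; \varphi)$ silently encodes the membership $\Re(w) < a$ required for $D_\mathrm{c}$ and simultaneously absorbs the delay-independent region (I). Both points rest on comparing $\cos(C^{-1}(-\tau a; \varphi) - \varphi)$ with $\cos\varphi$ and with $1$, and this is exactly where the hypothesis $\varphi \in (0, \pi/2]$ is used, since it keeps $C^{-1}(-\tau a; \varphi) - \varphi$ inside $(-\pi/2, 0)$ where cosine is positive and monotone; for $\varphi \in (\pi/2, \pi)$ the same bookkeeping would have to be redone around $S(\varphi)$ using $C_1^{-1}(\argdot; \varphi)$, $C_2^{-1}(\argdot; \varphi)$, and the maximum $M(\varphi)$.
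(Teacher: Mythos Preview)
Your proposal is correct and follows essentially the same route as the paper: start from the dichotomy (I)/(II) of Theorem~\ref{thm: T(a, w), real a}, rewrite (II) via the scaling identity~\eqref{eq: tau_c(a tau, w tau)} and Theorem~\ref{thm: tau_c(a, w) > 1, real a, Re(w) ge 0}, and then merge (I) into the single inequality $\abs{w} < \tfrac{1}{\tau}R(-\tau a;\varphi)$. The only difference is in how the merging step is justified: the paper dispatches it in one line by invoking Lemma~\ref{lem: property of R(argdot; varphi)} and the picture in Fig.~\ref{fig: decomposition of (a, |w|)-plane by critical delay}, whereas you extract the pointwise bounds $a < \tfrac{1}{\tau}R(-\tau a;\varphi) < a/\cos\varphi$ directly from the identity $\tfrac{1}{\tau}R(-\tau a;\varphi) = a/\cos\bigl(C^{-1}(-\tau a;\varphi) - \varphi\bigr)$ and the range $C^{-1}(-\tau a;\varphi) - \varphi \in (-\varphi,0) \subset (-\pi/2,0)$. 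Your version is more explicit---indeed, Lemma~\ref{lem: property of R(argdot; varphi)} as stated records limits and monotonicity rather than the pointwise sandwich you actually need, so the paper is tacitly relying on the same cosine computation you wrote out---but the underlying argument is identical.
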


\begin{proof}
From Theorem~\ref{thm: T(a, w), real a}, all the roots of Eq.~\eqref{eq: TE} have negative real parts if and only if one of the following conditions is satisfied:
\begin{itemize}
\item $a \ge \abs{w}$ and $a > \Re(w)$.
\item $(a, w) \in D_\mathrm{c}$ and $\tau_\mathrm{c}(a, w) > \tau$.
\end{itemize}
Here $a > \Re(w)$ is automatically satisfied under the condition $a \ge \abs{w}$, and the second condition becomes
\begin{equation*}
	\tau\Re(w) < \tau a < \tau{\abs{w}}, \mspace{10mu} \tau a > 0,
	\mspace{10mu} \text{and} \mspace{10mu}
	\tau{\abs{w}} < R(-\tau a; \varphi)
\end{equation*}
from equality~\eqref{eq: tau_c(a tau, w tau)} and Theorem~\ref{thm: tau_c(a, w) > 1, real a, Re(w) ge 0}.
By combining this and the first condition, we obtain the conclusion from Lemma~\ref{lem: property of R(argdot; varphi)} (see also Fig.~\ref{fig: decomposition of (a, |w|)-plane by critical delay}).
\end{proof}

Since the function $C^{-1}(\argdot; \varphi)$ gives a one-to-one correspondence between the open intervals $(-\infty, 0)$ and $(0, \varphi)$, the curve
\begin{equation*}
	\Set{\left( a, \frac{1}{\tau}R(-\tau a; \varphi) \right)}{a > 0}
\end{equation*}
in $(a, \abs{w})$-plane is parametrized by
\begin{equation}\label{eq: stability boundary curve for real a and imaginary w}
	a = -\frac{1}{\tau}\theta\cot(\theta - \varphi) \amd \abs{w} = -\frac{\theta}{\tau\sin(\theta - \varphi)}
\end{equation}
for $\theta \in (0, \varphi)$ in view of
\begin{equation*}
	-\tau a
	= C^{-1}(-\tau a; \varphi)\cot {\left( C^{-1}(-\tau a; \varphi) - \varphi \right)}.
\end{equation*}
See Fig.~\ref{fig: boundary curve, real a, Re(w) ge 0} for the picture of boundary curves when $\varphi = \pi/2$ and $\varphi = \pi/4$.

\begin{figure}[tbp]
\centering
	\begin{tikzpicture}
	\draw[-latex, semithick] (-1,0) -- (3,0) node[right]{$a$}; 
	\draw[-latex, semithick] (0,-1) -- (0,3) node[above]{$\abs{w}$}; 
	\draw (0,0) node[below right]{O}; 
	\draw[smooth,samples=100,variable=\a,domain=0:3,dotted,semithick] plot(\a,\a) node[right]{$\abs{w} = a$};
	\draw[smooth, samples=100, variable=\theta, domain=0.001:1.182] plot({-\theta/tan((\theta - pi/2) r)},{(-\theta/sin((\theta - pi/2) r)))});
	\draw[smooth, samples=100, variable=\theta, domain=0.001:0.6, densely dashed] plot({-\theta/tan((\theta - pi/4) r)},{(-\theta/sin((\theta - pi/4) r))});
	\end{tikzpicture}
\caption{Boundary curves in Corollary~\ref{cor: TE, real a, Re(w) ge 0} when $\varphi = \pi/2$ (solid) and $\varphi = \pi/4$ (dashed) for the case that $\tau = 1$. The dotted line denotes the straight line of $\abs{w} = a$.}
\label{fig: boundary curve, real a, Re(w) ge 0}
\end{figure}

\subsection{Inequality on critical delay and stability region for the case $\abs{\Arg(w)} \in (\pi/2, \pi)$}

We use the functions introduced below.

\begin{definition}
For each $\varphi \in (\pi/2, \pi)$, let $R_1(\argdot; \varphi) \colon [0, M(\varphi)] \to \R$ be the function defined by
\begin{equation}\label{eq: R_1(r; varphi)}
	R_1(r; \varphi)
	\coloneqq -\frac{C_1^{-1}(r; \varphi)}{\sin{\left( C_1^{-1}(r; \varphi) - \varphi \right)}}
	= -\frac{r}{\cos{\left( C_1^{-1}(r; \varphi) - \varphi \right)}}
\end{equation}
Here we have
\begin{equation*}
	-\pi < -\varphi < C_1^{-1}(r; \varphi) - \varphi < S(\varphi) - \varphi < -\frac{\pi}{2}
\end{equation*}
for all $0 < r < M(\varphi)$.
\end{definition}

\begin{definition}
For each $\varphi \in (\pi/2, \pi)$, let $R_2(\argdot; \varphi) \colon (-\infty, M(\varphi)] \to \R$ be the function defined by
\begin{equation}\label{eq: R_2(r; varphi)}
	R_2(r; \varphi)
	\coloneqq -\frac{C_2^{-1}(r; \varphi)}{\sin{\left( C_2^{-1}(r; \varphi) - \varphi \right)}}
	= -\frac{r}{\cos{\left( C_2^{-1}(r; \varphi) - \varphi \right)}}.
\end{equation}
Here we have
\begin{equation*}
	-\pi < S(\varphi) - \varphi < C_2^{-1}(r; \varphi) - \varphi < -\frac{\pi}{2}
\end{equation*}
for all $0 < r < M(\varphi)$ and
\begin{equation*}
	-\frac{\pi}{2} \le C_2^{-1}(r; \varphi) - \varphi < 0
\end{equation*}
for all $r \le 0$.
\end{definition}

The following lemmas give qualitative properties of the functions $R_1(\argdot; \varphi)$ and $R_2(\argdot; \varphi)$.
The proofs are similar to that of Lemma~\ref{lem: property of R(argdot; varphi)} but with \eqref{eq: C_1^-1(argdot; varphi)} and \eqref{eq: C_2^-1(argdot; varphi)}.

\begin{lemma}\label{lem: property of R_1(argdot; varphi)}
Let $\varphi \in (\pi/2, \pi)$ be given.
Then the function $R_1(\argdot; \varphi) \colon [0, M(\varphi)] \to \R$ is strictly monotonically increasing and satisfies
\begin{equation*}
	R_1(0; \varphi) = 0
	\amd
	R_1(M(\varphi); \varphi) = \sqrt{M(\varphi)}.
\end{equation*}
Furthermore,
\begin{equation*}
	\lim_{r \downarrow 0} \frac{R_1(r; \varphi)}{r} = \frac{1}{\abs{\cos\varphi}}
	\amd
	R_1(r; \varphi) > \frac{r}{\abs{\cos\varphi}}
\end{equation*}
hold.
\end{lemma}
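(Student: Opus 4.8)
The plan is to mirror the proof of Lemma~\ref{lem: property of R(argdot; varphi)}, writing $R_1(\argdot; \varphi)$ as a composition $R_1(r; \varphi) = g_\varphi(C_1^{-1}(r; \varphi))$ with $g_\varphi(\theta) \coloneqq -\theta/{\sin(\theta - \varphi)}$, and reading off the required data from \eqref{eq: C_1^-1(argdot; varphi)}. First I would record, exactly as in the earlier proof, that
\[
	\der{\theta} \mspace{2mu} \left( -\frac{\theta}{\sin(\theta - \varphi)} \right)
	= -\frac{1 - \theta\cot(\theta - \varphi)}{\sin(\theta - \varphi)}
\]
for $\theta \in (0, S(\varphi))$. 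Since $C_1^{-1}(\argdot; \varphi)$ takes values in $[0, S(\varphi)]$, along this range $\theta\cot(\theta - \varphi) = C_1(\theta; \varphi)$ lies in $(0, M(\varphi))$, and because $M(\varphi) = \cos^2(S(\varphi) - \varphi) \in (0, 1)$ we get $1 - \theta\cot(\theta - \varphi) > 1 - M(\varphi) > 0$. On the other hand $\theta - \varphi \in (-\varphi, S(\varphi) - \varphi) \subset (-\pi, -\pi/2)$, using $-\varphi > -\pi$ and the remark $S(\varphi) < \varphi - \pi/2$ following Lemma~\ref{lem: C(argdot; varphi), pi/2 < varphi < pi}, so $\sin(\theta - \varphi) < 0$. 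Hence $g_\varphi' > 0$ there, and since $C_1^{-1}(\argdot; \varphi)$ is strictly increasing by \eqref{eq: C_1^-1(argdot; varphi)}, the composition $R_1(\argdot; \varphi)$ is strictly increasing.

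For the boundary values I would substitute the data of \eqref{eq: C_1^-1(argdot; varphi)}. At $r = 0$, $C_1^{-1}(0; \varphi) = 0$ gives $R_1(0; \varphi) = 0$ at once. At $r = M(\varphi)$, $C_1^{-1}(M(\varphi); \varphi) = S(\varphi)$, so
\[
	R_1(M(\varphi); \varphi)
	= -\frac{S(\varphi)}{\sin(S(\varphi) - \varphi)}
	= -\cos(S(\varphi) - \varphi)
\]
after inserting the defining relation $S(\varphi) = \sin(S(\varphi) - \varphi)\cos(S(\varphi) - \varphi)$; since $S(\varphi) - \varphi \in (-\pi, -\pi/2)$ has negative cosine, $-\cos(S(\varphi) - \varphi) = \abs{\cos(S(\varphi) - \varphi)} = \sqrt{M(\varphi)}$. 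For the limit I would use the second expression in \eqref{eq: R_1(r; varphi)}, namely $R_1(r; \varphi)/r = -1/{\cos(C_1^{-1}(r; \varphi) - \varphi)}$, and let $r \downarrow 0$: by continuity $C_1^{-1}(r; \varphi) \to 0$, so the ratio tends to $-1/{\cos(-\varphi)} = -1/{\cos\varphi} = 1/{\abs{\cos\varphi}}$, the last equality because $\cos\varphi < 0$.

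The inequality $R_1(r; \varphi) > r/{\abs{\cos\varphi}}$ for $r \in (0, M(\varphi)]$ is where the sign bookkeeping differs most from the $\varphi \le \pi/2$ case and is the step I expect to be the main obstacle. Writing $\theta \coloneqq C_1^{-1}(r; \varphi) \in (0, S(\varphi)]$, dividing by $r > 0$ reduces the claim to $-1/{\cos(\theta - \varphi)} > -1/{\cos\varphi}$, equivalently $\cos(\theta - \varphi) > \cos\varphi$ with both values negative. Here I would invoke that cosine is strictly increasing on $(-\pi, -\pi/2)$ (its derivative $-\sin$ is positive there), and that both $-\varphi$ and $\theta - \varphi$ lie in this interval; since $\theta > 0$ forces $\theta - \varphi > -\varphi$, monotonicity gives $\cos(\theta - \varphi) > \cos(-\varphi) = \cos\varphi$, and as both are negative this yields $\abs{\cos(\theta - \varphi)} < \abs{\cos\varphi}$, hence the strict inequality. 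The only delicate point is confirming that the argument $\theta - \varphi$ never leaves the interval $(-\pi, -\pi/2)$ on which cosine is increasing, which is precisely the content of $S(\varphi) < \varphi - \pi/2$; so that obstacle is already dispatched by the earlier remark, and the rest is careful tracking of the two negative signs.
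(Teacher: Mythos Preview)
Your proposal is correct and follows essentially the same approach as the paper, which simply states that the monotonicity follows as in the proof of Lemma~\ref{lem: property of R(argdot; varphi)} and that the remaining properties are consequences of \eqref{eq: C_1^-1(argdot; varphi)} and \eqref{eq: R_1(r; varphi)} together with $\sqrt{M(\varphi)} = -\cos(S(\varphi) - \varphi)$. You have carefully spelled out the details the paper leaves implicit, in particular the sign analysis showing $1 - \theta\cot(\theta - \varphi) > 0$ (which now uses $M(\varphi) < 1$ rather than $\theta\cot(\theta - \varphi) < 0$ as in the earlier lemma) and the monotonicity of cosine on $(-\pi, -\pi/2)$ needed for the strict inequality $R_1(r; \varphi) > r/{\abs{\cos\varphi}}$.
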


\begin{proof}
The monotonicity property follows by the similar way to the proof of Lemma~\ref{lem: property of R(argdot; varphi)}.
All the remaining properties are consequences of \eqref{eq: C_1^-1(argdot; varphi)} and  \eqref{eq: R_1(r; varphi)} in view of $\sqrt{M(\varphi)} = -\cos(S(\varphi) - \varphi)$.
\end{proof}

The proof of the following lemma is similar to that of Lemma~\ref{lem: property of R_1(argdot; varphi)}.
Therefore, it can be omitted.

\begin{lemma}\label{lem: property of R_2(argdot, varphi)}
Let $\varphi \in (\pi/2, \pi)$ be given.
Then the function $R_2(\argdot; \varphi) \colon (-\infty, M(\varphi)] \to \R$ is strictly monotonically decreasing and satisfies
\begin{equation*}
	\lim_{r \to -\infty} R_2(r; \varphi) = \infty
	\amd
	R_2(M(\varphi); \varphi) = \sqrt{M(\varphi)}.
\end{equation*}
Furthermore,
\begin{equation*}
	\lim_{r \to -\infty} \frac{R_2(r; \varphi)}{\abs{r}} = 1
\end{equation*}
holds.
\end{lemma}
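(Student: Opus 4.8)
The plan is to mirror the proof of Lemma~\ref{lem: property of R(argdot; varphi)} (equivalently Lemma~\ref{lem: property of R_1(argdot; varphi)}) by writing $R_2(\argdot; \varphi)$ as the composition $R_2(r; \varphi) = g(C_2^{-1}(r; \varphi))$ of the strictly decreasing bijection $C_2^{-1}(\argdot; \varphi) \colon (-\infty, M(\varphi)] \to [S(\varphi), \varphi)$ with the auxiliary map $g(\theta) \coloneqq -\theta/\sin(\theta - \varphi)$. All the analytic content then reduces to the monotonicity of $g$ on $[S(\varphi), \varphi)$ together with reading off the endpoint and limiting behaviours from property~\eqref{eq: C_2^-1(argdot; varphi)}.

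First I would compute, exactly as in the earlier lemmas,
\[ g'(\theta) = -\frac{1 - \theta\cot(\theta - \varphi)}{\sin(\theta - \varphi)} = -\frac{1 - C(\theta; \varphi)}{\sin(\theta - \varphi)} \]
for $\theta \in (S(\varphi), \varphi)$. Here I expect the main obstacle, and the one point where the argument genuinely departs from that of Lemma~\ref{lem: property of R(argdot; varphi)}: on $(S(\varphi), \varphi)$ the quantity $\theta\cot(\theta - \varphi) = C(\theta; \varphi)$ is no longer negative — by Lemma~\ref{lem: C(argdot; varphi), pi/2 < varphi < pi} it rises to the maximum $M(\varphi) > 0$ at $\theta = S(\varphi)$ — so I cannot simply invoke $\theta\cot(\theta - \varphi) < 0$. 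Instead I would use the sharper bound $C(\theta; \varphi) \le M(\varphi) = \cos^2(S(\varphi) - \varphi) < 1$, which gives $1 - C(\theta; \varphi) > 0$; combined with $\sin(\theta - \varphi) < 0$ (valid because $\theta - \varphi \in (S(\varphi) - \varphi, 0) \subset (-\pi, 0)$), this yields $g'(\theta) > 0$. Since $C_2^{-1}(\argdot; \varphi)$ is strictly decreasing, the composition $R_2(\argdot; \varphi)$ is then strictly decreasing. I would differentiate $g$ in the $-\theta/\sin(\theta - \varphi)$ form rather than the $-r/\cos(\argdot)$ form of~\eqref{eq: R_2(r; varphi)}, since the latter degenerates at the interior point $\theta = \varphi - \pi/2$, where $r = C(\theta; \varphi) = 0$.

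For the boundary value I would evaluate at $r = M(\varphi)$: property~\eqref{eq: C_2^-1(argdot; varphi)} gives $C_2^{-1}(M(\varphi); \varphi) = S(\varphi)$, so $R_2(M(\varphi); \varphi) = -S(\varphi)/\sin(S(\varphi) - \varphi)$, and substituting the defining relation $S(\varphi) = \sin(S(\varphi) - \varphi)\cos(S(\varphi) - \varphi)$ collapses this to $-\cos(S(\varphi) - \varphi) = \sqrt{M(\varphi)}$, the sign being fixed by $\cos(S(\varphi) - \varphi) < 0$ (this is the same identity $\sqrt{M(\varphi)} = -\cos(S(\varphi) - \varphi)$ already exploited in Lemma~\ref{lem: property of R_1(argdot; varphi)}). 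For the limit as $r \to -\infty$, property~\eqref{eq: C_2^-1(argdot; varphi)} gives $C_2^{-1}(r; \varphi) \uparrow \varphi$, whence $\sin(C_2^{-1}(r; \varphi) - \varphi) \uparrow 0^-$ while $-C_2^{-1}(r; \varphi) \to -\varphi < 0$, so $R_2(r; \varphi) \to +\infty$.

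Finally, for $r < 0$ I would use the second form in~\eqref{eq: R_2(r; varphi)} together with $\abs{r} = -r$ to obtain $R_2(r; \varphi)/\abs{r} = 1/\cos(C_2^{-1}(r; \varphi) - \varphi)$, which tends to $1/\cos 0 = 1$ as $C_2^{-1}(r; \varphi) \to \varphi$. The only care needed throughout is the bookkeeping of the signs of $\sin$ and $\cos$ on the arc $\theta - \varphi \in (-\pi, 0)$; beyond the monotonicity subtlety flagged above, every step is routine and parallels Lemmas~\ref{lem: property of R(argdot; varphi)} and~\ref{lem: property of R_1(argdot; varphi)}.
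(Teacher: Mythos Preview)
Your proof is correct and follows essentially the same approach as the paper --- which in fact omits the proof entirely, stating only that it is similar to that of Lemma~\ref{lem: property of R_1(argdot; varphi)}. You have correctly identified and handled the one genuine subtlety: on $[S(\varphi), \varphi)$ the quantity $C(\theta; \varphi)$ may be positive, so the inequality $1 - C(\theta; \varphi) > 0$ must come from $C(\theta; \varphi) \le M(\varphi) = \cos^2(S(\varphi) - \varphi) < 1$ rather than from $C(\theta; \varphi) < 0$ as in Lemma~\ref{lem: property of R(argdot; varphi)}.
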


\begin{theorem}\label{thm: tau_c(a, w) > 1, real a, Re(w) < 0}
Let $a \in \R$ and $w \in \C \setminus \R$ be given so that $(a, w) \in D_\mathrm{c}$.
Suppose $\varphi \coloneqq \abs{\Arg(w)} \in (\pi/2, \pi)$.
Then $\tau_\mathrm{c}(a, w) > 1$ if and only if one of the following conditions is satisfied:
\begin{enumerate}[label=\textup{(\roman*)}]
\item $a \ge 0$ and $\abs{w} < R_2(-a; \varphi)$.
\item $-M(\varphi) < a < 0$ and $R_1(-a; \varphi) < \abs{w} < R_2(-a; \varphi)$.
\end{enumerate}
\end{theorem}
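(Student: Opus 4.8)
The plan is to mirror the proof of Theorem~\ref{thm: tau_c(a, w) > 1, real a, Re(w) ge 0}, the only genuinely new feature being that $C(\argdot; \varphi)$ is no longer monotone. First I would transform the inequality $\tau_\mathrm{c}(a, w) > 1$. Writing $X \coloneqq \sqrt{\abs{w}^2 - a^2} > 0$ (well defined and positive since $(a, w) \in D_\mathrm{c}$ forces $\abs{a} < \abs{w}$), the expression of $\tau_\mathrm{c}(a, w)$ together with identity~\eqref{eq: arccot(x) + arccot(-x) = pi} yields, exactly as in the previous proof,
\[
	\tau_\mathrm{c}(a, w) > 1 \iff X - \varphi + \pi < \arccot{\left( -\frac{a}{X} \right)}.
\]
Because $\arccot$ takes values in $(0, \pi)$, the left-hand side must also lie in $(0, \pi)$, which forces $X < \varphi$; conversely, if $X \ge \varphi$ then $X - \varphi + \pi \ge \pi > \arccot(-a/X)$ and the inequality fails. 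Under $X < \varphi$ both sides lie in $(0, \pi)$, so applying the strictly decreasing $\cot|_{(0, \pi)}$ (which reverses the inequality and uses $\cot(X - \varphi + \pi) = \cot(X - \varphi)$) and multiplying by $X > 0$ reduces the condition to
\[
	\tau_\mathrm{c}(a, w) > 1 \iff 0 < X < \varphi \text{ and } C(X; \varphi) > -a.
\]

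The key step, and the only real difference from the $\varphi \in (0, \pi/2]$ case, is to solve $C(X; \varphi) > -a$ for $X \in (0, \varphi)$ using the shape of $C(\argdot; \varphi)$ from Lemma~\ref{lem: C(argdot; varphi), pi/2 < varphi < pi}: it increases from $C(0; \varphi) = 0$ up to its maximum $M(\varphi)$ at $S(\varphi)$ and then decreases to $-\infty$. Setting $r \coloneqq -a$, I would split into three cases according to the position of $r$ relative to $0$ and $M(\varphi)$. If $r \ge M(\varphi)$ (i.e.\ $a \le -M(\varphi)$) there is no solution. If $0 < r < M(\varphi)$ (i.e.\ $-M(\varphi) < a < 0$), the super-level set is the open interval bounded by the two preimages on the increasing and decreasing branches, namely $C_1^{-1}(r; \varphi) < X < C_2^{-1}(r; \varphi)$. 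If $r \le 0$ (i.e.\ $a \ge 0$), the whole increasing branch already satisfies the strict inequality, so the solution set is $0 < X < C_2^{-1}(r; \varphi)$. In every case the upper bound is $C_2^{-1}(r; \varphi) < \varphi$, so the constraint $X < \varphi$ is automatically met.

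Finally I would translate these $X$-intervals back into conditions on $\abs{w}$. Since $X$ and all bounds are nonnegative, squaring $X = \sqrt{\abs{w}^2 - a^2}$ gives $\abs{w}^2 = X^2 + a^2$, so the inequalities on $X$ become inequalities on $\abs{w}^2$ governed by $a^2 + C_i^{-1}(-a; \varphi)^2$. The essential identity is $a^2 + C_i^{-1}(-a; \varphi)^2 = R_i(-a; \varphi)^2$: writing $\theta \coloneqq C_i^{-1}(-a; \varphi)$, the relation $\theta \cot(\theta - \varphi) = -a$ gives $a^2 + \theta^2 = \theta^2/\sin^2(\theta - \varphi) = R_i(-a; \varphi)^2$ by definitions~\eqref{eq: R_1(r; varphi)} and \eqref{eq: R_2(r; varphi)}. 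Taking square roots (legitimate since $R_1, R_2 \ge 0$ by Lemmas~\ref{lem: property of R_1(argdot; varphi)} and \ref{lem: property of R_2(argdot, varphi)}) turns the case $r \le 0$ into $\abs{w} < R_2(-a; \varphi)$, i.e.\ (i), and the case $0 < r < M(\varphi)$ into $R_1(-a; \varphi) < \abs{w} < R_2(-a; \varphi)$, i.e.\ (ii). The main obstacle I anticipate is purely bookkeeping: correctly identifying which branch of $C(\argdot; \varphi)$ produces each bound, tracking the sign $\sin(\theta - \varphi) < 0$ so that $R_i(-a; \varphi) = -\theta/\sin(\theta - \varphi) > 0$ and the square-root step is valid, and checking the boundary values $a = 0$ and $a = -M(\varphi)$ so that cases (i) and (ii) glue together consistently.
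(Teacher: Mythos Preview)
Your proposal is correct and follows essentially the same route as the paper: reduce $\tau_\mathrm{c}(a,w)>1$ to $X-\varphi+\pi<\arccot(-a/X)$, apply $\cot|_{(0,\pi)}$ to obtain $C(X;\varphi)>-a$ with $X\in(0,\varphi)$, invoke the unimodal shape of $C(\argdot;\varphi)$ from Lemma~\ref{lem: C(argdot; varphi), pi/2 < varphi < pi} to split into the two cases, and then convert the $X$-bounds into $\abs{w}$-bounds via the identity $a^2+C_i^{-1}(-a;\varphi)^2=R_i(-a;\varphi)^2$. Your write-up is in fact more explicit than the paper's (which defers most steps to ``in the same way as the proof of Theorem~\ref{thm: tau_c(a, w) > 1, real a, Re(w) ge 0}''), particularly in justifying the constraint $X<\varphi$ and in checking the boundary cases $a=0$ and $a=-M(\varphi)$.
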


\begin{proof}
In the same way as the proof of Theorem~\ref{thm: tau_c(a, w) > 1, real a, Re(w) ge 0}, we obtain the inequality
\begin{equation*}
	X(a, w) - \varphi + \pi < \arccot{\left( -\frac{a}{X(a, w)} \right)},
\end{equation*}
where $X(a, w) \coloneqq \sqrt{\abs{w}^2 - a^2}$.
From Lemma~\ref{lem: C(argdot; varphi), pi/2 < varphi < pi}, this can be solved as
\begin{enumerate}
\item[(i)] $a \ge 0$ and $X(a, w) < C_2^{-1}(-a; \varphi)$, or
\item[(ii)] $-M(\varphi) < a < 0$ and $C_1^{-1}(-a; \varphi) < X(a, w) < C_2^{-1}(-a; \varphi)$.
\end{enumerate}
\begin{itemize}
\item Case (i): By solving $X(a, w) < C_2^{-1}(-a; \varphi)$ with respect to $\abs{w}$, we obtain
\begin{equation*}
	\abs{w} < R_2(-a; \varphi)
\end{equation*}
in the similar way to the proof of Theorem~\ref{thm: tau_c(a, w) > 1, real a, Re(w) ge 0}.
\item Case (ii): By solving $C_1^{-1}(-a; \varphi) < X(a, w) < C_2^{-1}(-a; \varphi)$ with respect to $\abs{w}$, we obtain
\begin{equation*}
	R_1(-a; \varphi) < \abs{w} < R_2(-a; \varphi)
\end{equation*}
in the similar way to to the proof of Theorem~\ref{thm: tau_c(a, w) > 1, real a, Re(w) ge 0}.
\end{itemize}
This completes the proof.
\end{proof}

The following is a consequence of Theorem~\ref{thm: tau_c(a, w) > 1, real a, Re(w) < 0}.
It is proved in the similar way to Corollary~\ref{cor: TE, real a, Re(w) ge 0}, and therefore, the proof can be omitted.

\begin{corollary}[\cite{Sakata 1998}]\label{cor: TE, real a, Re(w) < 0}
Suppose $a \in \R$, $w \in \C \setminus \R$, and $\varphi \coloneqq \abs{\Arg(w)} \in (\pi/2, \pi)$.
Then all the roots of Eq.~\eqref{eq: TE} have negative real parts if and only if one of the following conditions is satisfied:
\begin{enumerate}[label=\textup{(\roman*)}]
\item
\begin{equation*}
	a \ge 0
	\amd
	\abs{w} < \frac{1}{\tau}R_2(-\tau a; \varphi).
\end{equation*}
\item
\begin{equation*}
	-\frac{1}{\tau}M(\varphi) < a < 0
	\amd
	\frac{1}{\tau}R_1(-\tau a; \varphi) < \abs{w} < \frac{1}{\tau}R_2(-\tau a; \varphi).
\end{equation*}
\end{enumerate}
\end{corollary}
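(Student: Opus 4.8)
The plan is to mirror the proof of Corollary~\ref{cor: TE, real a, Re(w) ge 0}, with the single function $R(\argdot; \varphi)$ now replaced by the pair $R_1(\argdot; \varphi)$ and $R_2(\argdot; \varphi)$. First I would invoke Theorem~\ref{thm: T(a, w), real a}: all roots of Eq.~\eqref{eq: TE} have negative real parts exactly when either (I) $a \ge \abs{w}$ and $a > \Re(w)$, or (II) $(a, w) \in D_\mathrm{c}$ and $\tau_\mathrm{c}(a, w) > \tau$. Because $\varphi = \abs{\Arg(w)} \in (\pi/2, \pi)$ forces $\Re(w) = \abs{w}\cos\varphi < 0$ while $w \ne 0$ gives $\abs{w} > 0$, the inequality $a > \Re(w)$ is automatic once $a \ge \abs{w}$; hence region (I) is simply the slab $0 < \abs{w} \le a$, which in particular requires $a > 0$.

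Next I would rewrite condition (II). By the homogeneity relation~\eqref{eq: tau_c(a tau, w tau)}, $\tau_\mathrm{c}(a, w) > \tau$ is equivalent to $\tau_\mathrm{c}(\tau a, \tau w) > 1$, and since $D_\mathrm{c}$ is a linear cone this preserves $(\tau a, \tau w) \in D_\mathrm{c}$. Applying Theorem~\ref{thm: tau_c(a, w) > 1, real a, Re(w) < 0} to $(\tau a, \tau w)$ and dividing the resulting bounds by $\tau$ turns (II) into: either $a \ge 0$ and $a < \abs{w} < \frac{1}{\tau}R_2(-\tau a; \varphi)$, or $-\frac{1}{\tau}M(\varphi) < a < 0$ and $\frac{1}{\tau}R_1(-\tau a; \varphi) < \abs{w} < \frac{1}{\tau}R_2(-\tau a; \varphi)$, where in the first alternative the lower constraint $a < \abs{w}$ is inherited from $D_\mathrm{c}$.

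The crux is then to glue region (I) onto the first alternative and to verify that the second alternative already stands alone. For $a \ge 0$, region (I) contributes $0 < \abs{w} \le a$ while the first alternative contributes $a < \abs{w} < \frac{1}{\tau}R_2(-\tau a; \varphi)$; these merge into the single inequality $\abs{w} < \frac{1}{\tau}R_2(-\tau a; \varphi)$ of case (i) provided the boundary curve lies strictly above the diagonal, i.e.\ $R_2(-\tau a; \varphi) > \tau a$. This is the step I expect to require the most care, but it is exactly what the identity $R_2(r; \varphi) = -r/{\cos(C_2^{-1}(r; \varphi) - \varphi)}$ from \eqref{eq: R_2(r; varphi)} delivers: for $r < 0$ one has $C_2^{-1}(r; \varphi) - \varphi \in (-\pi/2, 0)$, so the cosine lies in $(0, 1)$ and $R_2(r; \varphi) = \abs{r}/{\cos(C_2^{-1}(r; \varphi) - \varphi)} > \abs{r}$, while at $r = 0$ the second form of \eqref{eq: R_2(r; varphi)} gives the positive value $R_2(0; \varphi) = \varphi - \pi/2$. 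For $a < 0$ there is no region (I) contribution, so the stable set is described by the second alternative alone; here Lemma~\ref{lem: property of R_1(argdot; varphi)} (via $R_1(r; \varphi) > r/{\abs{\cos\varphi}}$) shows that the lower bound $R_1$ already forces $\abs{w} > (-a)/{\abs{\cos\varphi}}$, hence the cone constraint $\Re(w) < a$ holds automatically and no extra condition survives. Assembling the $a \ge 0$ and $a < 0$ descriptions, together with the monotonicity and endpoint values of $R_1, R_2$ from Lemmas~\ref{lem: property of R_1(argdot; varphi)} and \ref{lem: property of R_2(argdot, varphi)}, yields precisely conditions (i) and (ii) of the corollary.
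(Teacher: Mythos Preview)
Your proposal is correct and follows exactly the route the paper indicates (the paper omits the proof, stating only that it is similar to Corollary~\ref{cor: TE, real a, Re(w) ge 0}): invoke Theorem~\ref{thm: T(a, w), real a}, rescale via \eqref{eq: tau_c(a tau, w tau)}, apply Theorem~\ref{thm: tau_c(a, w) > 1, real a, Re(w) < 0}, and then merge region~(I) into case~(i) using the properties of $R_1$ and $R_2$ from Lemmas~\ref{lem: property of R_1(argdot; varphi)} and \ref{lem: property of R_2(argdot, varphi)}. One minor slip: at $r = 0$ it is the \emph{first} (sine) expression in \eqref{eq: R_2(r; varphi)}, not the second, that yields $R_2(0; \varphi) = \varphi - \pi/2$; the cosine form vanishes there.
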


Finally, we discuss the parametrization of the boundary curve.
Since
\begin{itemize}
\item the function $C_2^{-1}(\argdot; \varphi)$ gives a one-to-one correspondence between $(-\infty, M(\varphi))$ and $(S(\varphi), \varphi)$,
\item the function $C_1^{-1}(\argdot; \varphi)$ gives a one-to-one correspondence between $(0, M(\varphi))$ and $(0, S(\varphi))$,
\end{itemize}
the curves
\begin{align*}
	&\Set{\left( a, \frac{1}{\tau}R_2(-\tau a; \varphi) \right)}{a > -\frac{1}{\tau}M(\varphi)}, \\
	&\Set{\left( a, \frac{1}{\tau}R_1(-\tau a; \varphi) \right)}{-\frac{1}{\tau}M(\varphi) < a < 0}
\end{align*}
in $(a, \abs{w})$-plane are parametrized by \eqref{eq: stability boundary curve for real a and imaginary w} for $\theta \in (S(\varphi), \varphi)$ and for $\theta \in (0, S(\varphi))$, respectively.
By taking $\theta \in (0, \varphi)$, we also obtain the parametrization of the joined curve.
See Fig.~\ref{fig: boundary curve, real a, Re(w) < 0} for the picture of boundary curves when $\varphi = 9\pi/10$ and $\varphi = 3\pi/4$.

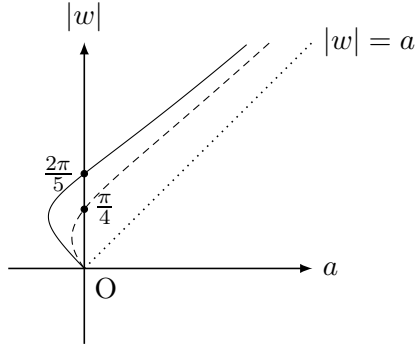
\begin{figure}[tbp]
\centering
	\begin{tikzpicture}
	\draw[-latex, semithick] (-1,0) -- (3,0) node[right]{$a$}; 
	\draw[-latex, semithick] (0,-1) -- (0,3) node[above]{$\abs{w}$}; 
	\draw (0,0) node[below right]{O}; 
	\draw (0,pi/4) node[right]{$\frac{\pi}{4}$};
	\filldraw (0,pi/4) circle [radius=1.2pt];
	\draw (0,2*pi/5) node[left]{$\frac{2\pi}{5}$};
	\filldraw (0,2*pi/5) circle [radius=1.2pt];
	\draw[variable=\a,domain=0:3,dotted,semithick] plot(\a,\a) node[right]{$\abs{w} = a$};
	\draw[smooth, samples=100, variable=\theta, domain=0.001:2.06] plot({-\theta/tan((\theta - 9*pi/10) r)},{-(\theta/sin((\theta - 9*pi/10) r))});
	\draw[smooth, samples=100, variable=\theta, domain=0.001:1.74, densely dashed] plot({-\theta/tan((\theta - 3*pi/4) r)},{-(\theta/sin((\theta - 3*pi/4) r))});
	\end{tikzpicture}
\caption{Boundary curves in Corollary~\ref{cor: TE, real a, Re(w) < 0} when $\varphi = 9\pi/10$ (solid) and $\varphi = 3\pi/4$ (dashed) for the case that $\tau = 1$. The dotted line denotes the straight line of $\abs{w} = a$.}
\label{fig: boundary curve, real a, Re(w) < 0}
\end{figure}

\subsection{Remarks}

\paragraph{Expression of critical delay}

By following the arguments of the proofs of Theorems~\ref{thm: tau_c(a, w) > 1, real a, Re(w) ge 0} and \ref{thm: tau_c(a, w) > 1, real a, Re(w) < 0} in reverse, one can obtain the expression of the critical delay from Corollaries~\ref{cor: TE, real a, Re(w) ge 0} and \ref{cor: TE, real a, Re(w) < 0} in the same reasoning as Remark~\ref{rmk: critical delay, real a and w}.

\paragraph{On Sakata's result}

Corollaries~\ref{cor: TE, real a, Re(w) ge 0} and \ref{cor: TE, real a, Re(w) < 0} are due to Sakata~\cite[Theorem]{Sakata 1998}.
The visualization of boundary curves was also obtained by Sakata~\cite{Sakata 1998} without the parametrization and the parameter range.

\paragraph{On parameter range given by Matsunaga}

Matsunaga \cite[Theorem B]{Matsunaga 2008} gave a restatement of Sakata's result with the following parametrization of the boundary curve:
\begin{equation*}
	a = -\frac{1}{\tau} \theta\cot(\theta - \abs{\psi})
	\amd
	b = \frac{\theta}{\tau \sin(\theta - \abs{\psi})}
\end{equation*}
for $\theta \in (\abs{\psi} - \pi, \abs{\psi})$.
Here $w = -b\mathrm{e}^{\iu\psi}$ for $b \in \R \setminus \{0\}$ and $\psi \in [-\pi/2, \pi/2]$.
We now compare this with the parametrization~\eqref{eq: stability boundary curve for real a and imaginary w} with the parameter range $\theta \in (0, \varphi)$.
\begin{itemize}
\item Case 1: $b > 0$.
By the proof of Lemma~\ref{lem: critical delay in Matsunaga}, we have
\begin{equation*}
	b = \abs{w} \amd -{\abs{\psi}} = \varphi - \pi.
\end{equation*}
Therefore, the above parametrization becomes
\begin{equation*}
	a = -\frac{1}{\tau} \theta\cot(\theta + \varphi)
	\amd
	\abs{w} = -\frac{\theta}{\tau \sin(\theta + \varphi)}
\end{equation*}
for $\theta \in (-\varphi, -\varphi + \pi)$.
\item Case 2: $b < 0$.
By the proof of Lemma~\ref{lem: critical delay in Matsunaga}, we have
\begin{equation*}
	b = -{\abs{w}} \amd |\psi| = \varphi.
\end{equation*}
Therefore, the above parametrization becomes
\begin{equation*}
	a = -\frac{1}{\tau} \theta\cot(\theta - \varphi)
	\amd
	\abs{w} = -\frac{\theta}{\tau \sin(\theta - \varphi)}
\end{equation*}
for $\theta \in (\varphi - \pi, \varphi)$.
\end{itemize}
We note that the parametrization in Case 1 is also expressed by that in Case 2 because the appearing functions are even.
However, the parameter range $(\varphi - \pi, \varphi)$ is not consistent with $(0, \varphi)$.

\section{Comparison with the method of D-partitions}\label{sec: comparison with D-partitions}

In this section, we apply the method of D-partitions to Eq.~\eqref{eq: TE} for the case of real $a$ and complex $w$ and to compare this with the results obtained in Sections~\ref{sec: method by critical delay for real a and w} and \ref{sec: method by critical delay for real a and imaginary w}.

\subsection{Method of D-partitions}\label{subsec: D-partitions}

In this subsection, we briefly summarize the method of D-partitions.
We consider a transcendental equation having $n$-th real parameters $(p_1, \dots, p_n)$.
As is mentioned in Introduction, the delay parameters are not included in $(p_1, \dots, p_n)$ for the purpose of obtaining the stability region.
In other words, the delay parameters are fixed in the following consideration of the method of D-partitions.

Assuming the situation that the transcendental equation has a root $\iu\Omega$ on the imaginary axis, we have the two constraints
\begin{equation*}
	f_1(p_1, \dots, p_n, \Omega) = 0
	\mspace{10mu} \text{and} \mspace{10mu}
	f_2(p_1, \dots, p_n, \Omega) = 0
\end{equation*}
which are obtained by the real and imaginary parts of the left-hand side of the considering transcendental equation.
Here it is assumed that the domain of the function $f = (f_1, f_2)$ is open in the extended parameter space (i.e., $(p_1, \dots, p_n, \Omega)$-space), and $f$ is sufficiently smooth in its domain.
Then the regular level set theorem states that the set of all solutions $(p_1, \dots, p_n, \Omega)$ satisfying the above constraints is an $(n - 1)$-dimensional smooth submanifold embedded in the extended parameter space if $0$ is a regular value of the function $f$.
Furthermore, if one can choose indices $1 \le i < j \le n$ so that the Jacobian determinant
\begin{equation*}
	\left| \frac{\partial(f_1, f_2)}{\partial(p_i, p_j)} \right|
\end{equation*}
is nonzero at some point, then by applying the implicit function theorem, the solution set are locally represented by the graph of some functions
\begin{equation*}
	p_i = p_i((p_k)_{k \ne i, j}, \Omega)
	\mspace{10mu} \text{and} \mspace{10mu}
	p_j = p_j((p_k)_{k \ne i, j}, \Omega).
\end{equation*}
If this can be possible globally, then one obtain hyper-surfaces in the parameter $(p_1, \dots, p_n)$-space by removing the angular frequency $\Omega$ from the extended parameter space.

We refer the reader to \cite[Chapter III.3]{Elsgolts--Norkin 1973}, \cite[Subsection 3.2 in Chapter 2]{Kolmanovskii--Nosov 1986}, and \cite[Sections XI.1 and XI.2 in Chapter XI]{Diekmann--vanGils--Lunel--Walther 1995} for the details of the method of D-partitions including the analysis of Eq.~\eqref{eq: TE} for the case that $a$ and $w$ are real numbers.
See \cite{Diekmann--Korvasova 2013} for a note stressing the importance of converting the parameters $a$ and $w$ to original model parameters in the method of $D$-partitions.
See also \cite{Bernard--Belair--Mackey 2001}, \cite{Kiss--Krauskopf 2010} for applications of the method of D-partitions to differential equations with distributed delay and \cite{Diekmann--Getto--Nakata 2016} to a neutral delay differential equation.

\subsection{Conditions for roots on the imaginary axis with real \texorpdfstring{$a$}{a}}\label{subsec: roots on imaginary axis}

Suppose $a \in \R$ and $w \in \C \setminus \{0\}$.
The proof of Theorem~\ref{thm: T(a, w), real a} in \cite{Matsunaga 2008} relies on the investigation of the condition on $a$, $w$, and $\tau$ for which Eq.~\eqref{eq: TE} has a purely imaginary root (i.e., a nonzero root on the imaginary axis).
We note that Eq.~\eqref{eq: TE} has a root $0$ if and only if $w = a$.

\subsubsection{An interpretation via implicit function theorem}

Angular frequency eqs.~\eqref{eq: angular frequency}
\begin{equation*}
	\left\{
	\begin{aligned}
		a - \abs{w}\cos(\Arg(w) - \tau\Omega) &= 0, \\
		\Omega - \abs{w}\sin(\Arg(w) - \tau\Omega) &= 0
	\end{aligned}
	\right.
\end{equation*}
can be considered as a system of equations with respect to the five variables $a \in \R$, $\rho \coloneqq \abs{w} > 0$, $\psi \coloneqq \Arg(w) \in (-\pi, \pi]$, $\tau > 0$, and $\Omega \in \R$.
We now give an interpretation on Lemmas~\ref{lem: angular frequency, [0, pi]} and \ref{lem: angular frequency, (-pi, 0)} from the viewpoint of the implicit function theorem.

Let
\begin{align*}
	f(a, \rho, \psi, \tau, \Omega)
	&\coloneqq (f_1(a, \rho, \psi, \tau, \Omega), f_2(a, \rho, \psi, \tau, \Omega)) \\
	&\coloneqq (a - \rho\cos(\psi - \tau\Omega), \Omega - \rho\sin(\psi - \tau\Omega)).
\end{align*}
We study the solution set of the equation $f(a, \rho, \psi, \tau, \Omega) = 0$.
The Jacobian determinant $|\partial(f_1, f_2)/\partial(\tau, \Omega)|$ is calculated as
\begin{align*}
	&\left| \frac{\partial(f_1, f_2)}{\partial(\tau, \Omega)} \right| \\
	&= -\rho\Omega\sin(\psi - \tau\Omega)[1 + \rho\tau\cos(\psi - \tau\Omega)] - \{-\rho\tau\sin(\psi - \tau\Omega) \cdot \rho\Omega\cos(\psi - \tau\Omega) \} \\
	&= -\rho\Omega\sin(\psi - \tau\Omega).
\end{align*}
Therefore, by restricting the domain of definition of the function $f$ to the subset satisfying $\Omega\sin(\psi - \tau\Omega) \ne 0$, both of $\tau$ and $\Omega$ can be written as functions of $(a, \rho, \psi)$.

The independency of $\Omega$ from $\psi$ is also derived by calculating the partial derivative $\partial\Omega/\partial\psi$ as follows:
By partially differentiating $f(a, \rho, \psi, \tau, \Omega) = 0$ with respect to $\psi$, we have
\begin{align*}
	&\rho\sin(\psi - \tau\Omega) \cdot \left[ 1 - \frac{\partial(\tau\Omega)}{\partial\psi} \right] = 0, \\
	&\frac{\partial\Omega}{\partial\psi} - \rho\cos(\psi - \tau\Omega) \cdot \left[ 1 - \frac{\partial(\tau\Omega)}{\partial\psi} \right] = 0.
\end{align*}
Therefore, we necessarily have $\partial\Omega/\partial\psi = 0$ if $\sin(\psi - \tau\Omega) \ne 0$.

\subsection{Curves parametrized by angular frequency}\label{subsec: curves parametrized by angular frequency}

Suppose $a \in \R$ and $w \in \C \setminus \{0\}$.
For each given $\Omega \in \R$ and $\tau > 0$, we will find a condition on $a$ and $w$ for which Eq.~\eqref{eq: TE} has a root $\iu\Omega$.
Since Eq.~\eqref{eq: TE} has a root $0$ if and only if $a = w$, it is sufficient to find a purely imaginary root of Eq.~\eqref{eq: TE}.

We use the following notation.

\begin{notation}
Let $\tau > 0$ be given.
For each $\Omega \in \R \setminus \{0\}$ and $\psi \in (-\pi, \pi]$ satisfying $\tau\Omega - \psi \not\in \pi\Z$, let
\begin{equation*}
	a(\Omega, \psi; \tau) \coloneqq -\Omega\cot(\tau\Omega - \psi)
	\amd
	\rho(\Omega, \psi; \tau) \coloneqq - \frac{\Omega}{\sin(\tau\Omega - \psi)}.
\end{equation*}
For each $\tau > 0$, $a(\Omega, \psi; \tau)$ and $\rho(\Omega, \psi; \tau)$ are expressed as
\begin{equation}\label{eq: a(theta, psi; tau), rho(theta, psi; tau)}
	a(\Omega, \psi; \tau) = \frac{1}{\tau}a(\tau\Omega, \psi; 1)
	\amd
	\rho(\Omega, \psi; \tau) = \frac{1}{\tau}\rho(\tau\Omega, \psi; 1).
\end{equation}
\end{notation}

From \eqref{eq: angular frequency}, Eq.~\eqref{eq: TE} has a root $\iu\Omega$ ($\Omega \ne 0$) if and only if $\sin(\Arg(w) - \tau\Omega) \ne 0$, i.e., $\tau\Omega - \Arg(w) \not\in \pi\Z$,
\begin{gather*}
	\abs{w}
	= \frac{\Omega}{\sin(\Arg(w) - \tau\Omega)}
	= \rho(\Omega, \Arg(w); \tau), \\
\intertext{and}
	a
	= \abs{w}\cos(\Arg(w) - \tau\Omega)
	= a(\Omega, \Arg(w); \tau).
\end{gather*}
This means that by varying $\Omega \in \R \setminus \{0\}$ so that $\tau\Omega - \Arg(w) \not\in \pi\Z$ for each given $\tau > 0$ and each fixed $\Arg(w)$, we obtain the parametrization of curves in $(a, \abs{w})$-plane on which Eq.~\eqref{eq: TE} has purely imaginary roots.
This is the method of D-partitions in our case.

We introduce the following notation.

\begin{notation}
For each integer $k \ne 0$ and $\psi \in (-\pi, \pi]$, let
\begin{equation*}
	I_k(\psi) \coloneqq
	\begin{cases}
		(-\pi, 0) + \psi + 2k\pi & (k \ge 1), \\
		(0, \pi) + \psi + 2k\pi & (k \le -1).
	\end{cases}
\end{equation*}
Let
\begin{equation*}
	I_\mathrm{c}(\psi) \coloneqq
	\begin{cases}
		(0, \psi) & (\psi \ge 0), \\
		(\psi, 0) & (\psi \le 0).
	\end{cases}
\end{equation*}
Here we are interpreting that $I_\mathrm{c}(\psi)$ is empty when $\psi = 0$.
\end{notation}

\begin{lemma}\label{lem: positivity of rho(theta, psi; tau)}
Let $\psi \in (-\pi, \pi]$ and $\tau > 0$ be given.
Suppose $\Omega \in \R \setminus \{0\}$.
Then $\rho(\Omega, \psi; \tau) > 0$ if and only if
\begin{equation*}
	\tau\Omega \in I_\mathrm{c}(\psi) \cup \bigcup_{k \in \Z \setminus \{0\}} I_k(\psi)
\end{equation*}
holds.
\end{lemma}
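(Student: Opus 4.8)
The plan is to reduce the positivity of $\rho(\Omega, \psi; \tau) = -\Omega/{\sin(\tau\Omega - \psi)}$ to a single sign comparison and then read off the answer from the sign pattern of the sine. First I would set $u \coloneqq \tau\Omega$; since $\tau > 0$ we have $\sgn(u) = \sgn(\Omega)$ and
\[
	\rho(\Omega, \psi; \tau) = -\frac{u}{\tau\sin(u - \psi)},
\]
so that, because $u - \psi \notin \pi\Z$ is assumed (hence $\sin(u - \psi) \ne 0$), the inequality $\rho(\Omega, \psi; \tau) > 0$ holds if and only if $u$ and $\sin(u - \psi)$ have opposite signs.

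Next I would record the sign pattern of $\sin(u - \psi)$. Since sine is negative on $((2k - 1)\pi, 2k\pi)$ and positive on $(2k\pi, (2k + 1)\pi)$, we have $\sin(u - \psi) < 0$ exactly on the intervals $J_k^- \coloneqq (\psi + (2k - 1)\pi, \psi + 2k\pi)$ and $\sin(u - \psi) > 0$ exactly on $J_k^+ \coloneqq (\psi + 2k\pi, \psi + (2k + 1)\pi)$, for $k \in \Z$. The condition that $u$ and $\sin(u - \psi)$ have opposite signs therefore means
\[
	u \in \Bigl( (0, \infty) \cap \bigcup_{k \in \Z} J_k^- \Bigr) \cup \Bigl( (-\infty, 0) \cap \bigcup_{k \in \Z} J_k^+ \Bigr).
\]

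The third step is to match these intersections with the sets $I_k(\psi)$ and $I_\mathrm{c}(\psi)$, which is where all the bookkeeping lives. A direct comparison of endpoints shows $J_k^- = I_k(\psi)$ for $k \ge 1$ and $J_k^+ = I_k(\psi)$ for $k \le -1$; using $\psi \in (-\pi, \pi]$ one checks that each $I_k(\psi)$ with $k \ge 1$ lies entirely in $(0, \infty)$ and each with $k \le -1$ lies entirely in $(-\infty, 0)$, so these contribute exactly the union over $k \in \Z \setminus \{0\}$. The only intervals straddling the origin are the two $k = 0$ intervals $J_0^- = (\psi - \pi, \psi)$ and $J_0^+ = (\psi, \psi + \pi)$: intersecting $J_0^-$ with $(0, \infty)$ gives $(0, \psi)$ when $\psi > 0$ and nothing when $\psi \le 0$, while intersecting $J_0^+$ with $(-\infty, 0)$ gives $(\psi, 0)$ when $\psi < 0$ and nothing when $\psi \ge 0$; together these are precisely $I_\mathrm{c}(\psi)$. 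Meanwhile $J_0^+ \cap (0, \infty)$ and $J_0^- \cap (-\infty, 0)$ yield sign agreements and are correctly excluded. Assembling the three contributions gives exactly $I_\mathrm{c}(\psi) \cup \bigcup_{k \in \Z \setminus \{0\}} I_k(\psi)$, which proves both directions at once.

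The main obstacle I anticipate is purely the careful interval arithmetic of the third step: confirming that each $I_k(\psi)$ with $k \ge 1$ is positive and each with $k \le -1$ is negative (this uses both $\psi > -\pi$ and $\psi \le \pi$), and correctly treating the boundary cases $\psi = 0$, where $I_\mathrm{c}(\psi)$ should be empty, and $\psi = \pi$, where $J_0^-$ just reaches the origin. These verifications are elementary but must be carried out uniformly in $k$ to guarantee that the resulting union is neither too large nor too small.
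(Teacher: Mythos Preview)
Your proposal is correct and follows essentially the same approach as the paper: reduce the positivity of $\rho$ to a sign comparison between $\tau\Omega$ and $\sin(\tau\Omega - \psi)$, read off the sign pattern of sine on intervals of length $\pi$, and match the resulting intervals with $I_\mathrm{c}(\psi)$ and the $I_k(\psi)$. The only difference is organizational: the paper reduces to $\tau = 1$, fixes $\psi > 0$, and splits into the cases $\Omega > 0$ and $\Omega < 0$ (then remarks that $\psi < 0$ is similar and $\psi = 0$ simpler), whereas you treat all signs of $\psi$ and $\Omega$ simultaneously via the substitution $u = \tau\Omega$ and a single interval decomposition --- a slightly more uniform write-up of the same argument.
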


\begin{proof}
From \eqref{eq: a(theta, psi; tau), rho(theta, psi; tau)}, it is sufficient to consider the case $\tau = 1$.
We also consider the case $\psi > 0$.
The proof is divided into the following two cases.
\begin{itemize}[leftmargin=*]
\item Case 1: $\Omega > 0$.
In this case, the positivity of $\rho(\Omega, \psi; 1)$ is equivalent to $\sin(\Omega - \psi) < 0$.
This is equivalent to
\begin{equation*}
	\Omega - \psi \in (-\pi, 0) + 2k\pi
\end{equation*}
for some $k \in \Z$.
Since $\Omega - \psi > -\psi \ge -\pi$, $k$ necessarily satisfies $k \ge 0$.
We note that the condition for the case $k = 0$ becomes $\Omega \in (0, \psi) = I_\mathrm{c}(\psi)$ because of $\psi - \pi \le 0$.
\item Case 2: $\Omega < 0$.
The same reasoning imposes $\sin(\Omega - \psi) > 0$, i.e.,
\begin{equation*}
	\Omega - \psi \in (0, \pi) + 2k\pi
\end{equation*}
for some $k \in \Z$.
Since $\Omega - \psi < -\psi < 0$, $k$ necessarily satisfies $k \le -1$.
\end{itemize}
The similar proof is valid for the case $\psi < 0$, and the proof for the case $\psi = 0$ is more simpler than these cases.
This completes the proof.
\end{proof}

In view of Lemma~\ref{lem: positivity of rho(theta, psi; tau)}, we introduce the following notation.

\begin{notation}
For each $\psi \in (-\pi, \pi]$ and each $\tau > 0$, let
\begin{align*}
	\Gamma_*(\psi; \tau)
	&\coloneqq \Set{\bigl( a(\Omega, \psi; \tau), \rho(\Omega, \psi; \tau)\e^{\iu\psi} \bigr)}{\tau\Omega \in I_*(\psi)}, \\
	\widetilde{\Gamma}_*(\psi; \tau)
	&\coloneqq \Set{\bigl( a(\Omega, \psi; \tau), \rho(\Omega, \psi; \tau) \bigr)}{\tau\Omega \in I_*(\psi)},
\end{align*}
where the symbol $*$ denotes $\mathrm{c}$ or some nonzero integer $k$.
\end{notation}

For each $* \in \{\mathrm{c}\} \cup (\Z \setminus \{0\})$, we consider $\Gamma_*(\psi; \tau)$ as a parametrized curve with the parametrization given by
\begin{equation}\label{eq: parametrization of Gamma_*(psi; tau)}
	\frac{1}{\tau}I_*(\psi) \ni \Omega \mapsto \bigl( a(\Omega, \psi; \tau), \rho(\Omega, \psi; \tau)\e^{\iu\psi} \bigr) \in \R \times (\C \setminus \{0\}).
\end{equation}
We note that
\begin{equation}\label{eq: Gamma_*(psi ;tau)}
	\Gamma_*(\psi; \tau) = \frac{1}{\tau}\Gamma_*(\psi; 1)
\end{equation}
holds from \eqref{eq: a(theta, psi; tau), rho(theta, psi; tau)}.

By using the above notation, under the assumption of $\Arg(w) = \psi$, the set of all $(a, w)$ for which Eq.~\eqref{eq: TE} has purely imaginary roots is represented by
\begin{equation*}
	\Gamma_\mathrm{c}(\psi; \tau) \cup \bigcup_{k \in \Z \setminus \{0\}} \Gamma_k(\psi; \tau).
\end{equation*}

Here we briefly study the location of each curves.
For all $\Omega$ satisfying $\tau\Omega \in I_\mathrm{c}(\psi) \cup \bigcup_{k \in \Z \setminus \{0\}} I_k(\psi)$, the inequalities
\begin{equation*}
	-\rho(\Omega, \psi; \tau) < a(\Omega, \psi; \tau) < \rho(\Omega, \psi; \tau)
\end{equation*}
are obtained.
This can be seen by dividing all the terms by $\Omega/\sin(\tau\Omega - \psi) < 0$ because the resulting inequalities are
\begin{equation*}
	-1 < -\cos(\tau\Omega - \psi) < 1.
\end{equation*}
The above inequalities are also obtained by
\begin{equation}\label{eq: rho(Omega, psi; tau)^2 - a(Omega, psi; tau)^2}
	\rho(\Omega, \psi; \tau)^2 - a(\Omega, \psi; \tau)^2
	= \Omega^2 \left[ \frac{1}{\sin^2(\tau\Omega - \psi)} - \cot^2(\tau\Omega - \psi) \right]
	= \Omega^2
	> 0
\end{equation}
under the assumption of $\rho(\Omega, \psi; \tau) > 0$.
Therefore, all the curves $\Gamma_*(\psi; \tau)$ are contained in a linear cone
\begin{equation*}
	\Set{(a, w) \in \R \times (\C \setminus \{0\})}{\abs{a} < \abs{w}}
\end{equation*}
in $(a, w)$-space.

Calculation~\eqref{eq: rho(Omega, psi; tau)^2 - a(Omega, psi; tau)^2} also shows that each curve $\Gamma_*(\psi; \tau)$ does not have a self-intersection, namely, parametrization~\eqref{eq: parametrization of Gamma_*(psi; tau)} gives a one-to-one correspondence between $(1/\tau)I_*(\psi)$ and $\Gamma_*(\psi; \tau)$.
We note that this is a natural consequence from Theorem~\ref{thm: tau-sequence, angular frequency}.

We next study the curve $\Gamma_\mathrm{c}(\psi; \tau)$ in more detail.
Here the following remark is useful.

\begin{remark}
$\tau\Omega \in I_\mathrm{c}(\psi)$ can be written as $\tau{\abs{\Omega}} \in (0, \abs{\psi})$.
By combining this and
\begin{equation*}
	a(\Omega, \psi; \tau) = a(-\Omega, -\psi; \tau)
	\amd
	\rho(\Omega, \psi; \tau) = \rho(-\Omega, -\psi; \tau),
\end{equation*}
$\Gamma_\mathrm{c}(\psi; \tau) = \Gamma_\mathrm{c}({\abs{\psi}}; \tau)$ holds.
\end{remark}

The above remark shows that we only have to consider the case $\psi > 0$ to study $\Gamma_\mathrm{c}(\psi; \tau)$.

\begin{lemma}
Let $\psi \in (-\pi, \pi]$ and $\tau > 0$ be given.
Then for all $\Omega$ satisfying $\tau\Omega \in I_\mathrm{c}(\psi)$, 
\begin{equation*}
	a(\Omega, \psi; \tau) > \rho(\Omega, \psi; \tau)\cos\psi
\end{equation*}
holds.
\end{lemma}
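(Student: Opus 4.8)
The plan is to strip the two inessential parameters---the delay $\tau$ and the sign of $\psi$---by scaling and symmetry, and then to reduce the claim to a one-line monotonicity statement about cosine. First I would invoke the scaling relations~\eqref{eq: a(theta, psi; tau), rho(theta, psi; tau)}: since $a(\Omega, \psi; \tau) = (1/\tau)\,a(\tau\Omega, \psi; 1)$ and $\rho(\Omega, \psi; \tau) = (1/\tau)\,\rho(\tau\Omega, \psi; 1)$ with $\tau > 0$, multiplying the desired inequality by $\tau$ shows it is equivalent to $a(\tau\Omega, \psi; 1) > \rho(\tau\Omega, \psi; 1)\cos\psi$, where now $\tau\Omega \in I_\mathrm{c}(\psi)$. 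Writing $\theta \coloneqq \tau\Omega$, it therefore suffices to treat the case $\tau = 1$. Next I would use the symmetry relations from the remark preceding the statement, namely $a(\Omega, \psi; \tau) = a(-\Omega, -\psi; \tau)$ and $\rho(\Omega, \psi; \tau) = \rho(-\Omega, -\psi; \tau)$, together with $\cos\psi = \cos(-\psi)$ and the fact that $\theta \in I_\mathrm{c}(\psi)$ if and only if $-\theta \in I_\mathrm{c}(-\psi)$; these let me deduce the case $\psi < 0$ from the case $\psi > 0$, while the case $\psi = 0$ is vacuous because $I_\mathrm{c}(0) = \emptyset$. This reduces everything to $\psi \in (0, \pi]$ with $\theta \in (0, \psi)$.

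In this remaining range I would substitute the definitions $a(\theta, \psi; 1) = -\theta\cot(\theta - \psi)$ and $\rho(\theta, \psi; 1) = -\theta/\sin(\theta - \psi)$. Since $\theta \in (0, \psi) \subseteq (0, \pi)$ forces $\theta - \psi \in (-\psi, 0) \subseteq (-\pi, 0)$, I have $\sin(\theta - \psi) < 0$ and $\theta > 0$. The plan is then to divide the target inequality by $-\theta < 0$ and afterwards multiply by $\sin(\theta - \psi) < 0$; each operation reverses the inequality once, so its direction is preserved overall, and the statement collapses to
\begin{equation*}
	\cos(\theta - \psi) > \cos\psi.
\end{equation*}

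To finish I would appeal to the monotonicity of cosine. Because cosine is even, $\cos(\theta - \psi) = \cos(\psi - \theta)$, and the hypotheses give $0 < \psi - \theta < \psi \le \pi$, so both $\psi - \theta$ and $\psi$ lie in $[0, \pi]$ with $\psi - \theta < \psi$; since $\cos$ is strictly decreasing on $[0, \pi]$, this yields $\cos(\psi - \theta) > \cos\psi$, which is exactly the required inequality. I expect the only delicate point to be the bookkeeping of inequality directions when dividing by $-\theta$ and multiplying by $\sin(\theta - \psi)$, both of which are negative; once these signs are tracked correctly, no genuine obstacle remains.
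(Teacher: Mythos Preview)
Your proof is correct and follows essentially the same route as the paper: reduce to $\tau = 1$ and $\psi > 0$, transform the inequality into $\cos(\theta - \psi) > \cos\psi$, and conclude by the strict monotonicity of $\cos$ on $[0,\pi]$. The only cosmetic difference is that the paper divides through by $\rho(\Omega,\psi;1) > 0$ in one step (so that $a/\rho = \cos(\Omega-\psi)$), whereas you perform the equivalent two sign-reversing operations separately.
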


\begin{proof}
We only have to consider the case $\psi > 0$.
From \eqref{eq: a(theta, psi; tau), rho(theta, psi; tau)}, it is sufficient to consider the case $\tau = 1$.
Since $\rho(\Omega, \psi; 1) > 0$, the inequality is equivalent to $\cos(\Omega - \psi) > \cos\psi$.
Since $\Omega \in (0, \psi)$, i.e., $-\psi < \Omega - \psi < 0$, $\cos(\Omega - \psi) > \cos\psi$ is equivalent to
\begin{equation*}
	-\Omega + \psi < \psi,
\end{equation*}
which trivially holds.
\end{proof}

The above lemma means that for any $\psi \in (-\pi, \pi]$ and any $\tau > 0$, the curve $\Gamma_\mathrm{c}(\psi; \tau)$ is contained in the subset $D_\mathrm{c}$, which is a linear cone and the domain of definition of the critical delay function.

\subsection{``One-to-one correspondence'' and ``ordering"}

In this subsection, we treat a special ``ordering'' for the curves $\Gamma_*(\psi; \tau)$.

\begin{notation}
Let $C$ be a linear cone (in a linear topological space over $\R$) and $\Gamma, \Gamma' \subset C$ be nonempty subsets.
We write $\Gamma \prec \Gamma'$ if the following condition is satisfied:
For every $v \in C$, there exists a unique pair $(s, s')$ of positive numbers such that $s < s'$, $sv \in \Gamma$, and $s'v \in \Gamma'$.
\end{notation}

The above concept should be compared with \cite[page 334]{Kiss--Krauskopf 2010}.

We first consider correspondences between the curves $\Gamma_*(\psi; \tau)$ and $\tau$-values, which will be useful for determining the $\prec$-ordering of the family of curves $(\Gamma_*(\psi; \tau))_{* \in \{\mathrm{c}\} \cup (\Z \setminus \{0\})}$.

\begin{lemma}\label{lem: tau-values and parametrized curves}
Let $\psi \in (-\pi, \pi]$ and $\tau > 0$ be given.
Let
\begin{equation*}
	a \coloneqq a(\Omega, \psi; \tau)
	\amd
	w \coloneqq \rho(\Omega, \psi; \tau)\e^{\iu\psi}
\end{equation*}
for some $\tau\Omega \in I_\mathrm{c}(\psi) \cup \bigcup_{k \in \Z \setminus \{0\}} I_k(\psi)$.
Then the following equivalences hold:
\begin{enumerate}[label=\textup{\arabic*.}]
\item $\tau = \tau_\mathrm{c}(a, w)$ if and only if $\tau\Omega \in I_\mathrm{c}(\psi)$.
\item For each integer $n \ge 1$, $\tau = \tau_n^+(a, w)$ if and only if $\tau\Omega \in I_n(\psi)$.
\item For each integer $n \ge 1$, $\tau = \tau_n^-(a, w)$ if and only if $\tau\Omega \in I_{-n}(\psi)$.
\end{enumerate}
\end{lemma}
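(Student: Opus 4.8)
The plan is to reduce the whole statement to the single scalar identity $a/\abs{w} = \cos(\tau\Omega - \psi)$ and then to read off each delay value by evaluating an $\arccos$ on the appropriate interval. First I would record the three facts that hold for every configuration allowed by the hypothesis. Since $\tau\Omega$ lies in $I_\mathrm{c}(\psi) \cup \bigcup_{k \ne 0} I_k(\psi)$, Lemma~\ref{lem: positivity of rho(theta, psi; tau)} gives $\abs{w} = \rho(\Omega, \psi; \tau) > 0$, hence $\Arg(w) = \psi$; calculation~\eqref{eq: rho(Omega, psi; tau)^2 - a(Omega, psi; tau)^2} gives $\Omega(a, w) = \sqrt{\abs{w}^2 - a^2} = \abs{\Omega}$; and dividing the definitions of $a(\Omega, \psi; \tau)$ and $\rho(\Omega, \psi; \tau)$ yields $a/\abs{w} = \cos(\tau\Omega - \psi)$, so that $\arccos(a/\abs{w}) = \arccos(\cos(\tau\Omega - \psi))$. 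Using the scalings \eqref{eq: a(theta, psi; tau), rho(theta, psi; tau)} and \eqref{eq: tau_c(a tau, w tau)}, together with the evident analogues for $\tau_n^\pm$ (which scale the same way because $\Omega(sa, sw) = s\,\Omega(a, w)$ and the bracketed terms are invariant), I may assume $\tau = 1$ throughout, replacing $\tau\Omega$ by a single variable ranging over the same intervals.

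The computational core is the forward implication in each case, carried out by reducing the argument of $\arccos(\cos(\argdot))$ modulo $2\pi$ into $(-\pi, \pi)$ and tracking the sign of $\Omega$. For $\tau\Omega \in I_\mathrm{c}(\psi)$, after reducing to $\psi \ge 0$ via $\Gamma_\mathrm{c}(\psi; \tau) = \Gamma_\mathrm{c}(\abs{\psi}; \tau)$, one has $\tau\Omega - \psi \in (-\psi, 0)$ and $\Omega > 0$, whence $\arccos(a/\abs{w}) = \psi - \tau\Omega$; substituting into the definition of $\tau_\mathrm{c}(a, w)$ telescopes to $\tau\Omega/\Omega(a,w) = \tau$, and the case $\psi < 0$ is symmetric under $(\psi, \Omega) \mapsto (-\psi, -\Omega)$. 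For $\tau\Omega \in I_n(\psi)$ with $n \ge 1$ one has $\tau\Omega - \psi \in (2n\pi - \pi, 2n\pi)$ and $\Omega > 0$, so $\arccos(a/\abs{w}) = 2n\pi - \tau\Omega + \psi$, and substituting into $\tau_n^+(a, w)$ again telescopes to $\tau$. Symmetrically, for $\tau\Omega \in I_{-n}(\psi)$ with $n \ge 1$ one has $\tau\Omega - \psi \in (-2n\pi, -2n\pi + \pi)$ and $\Omega < 0$, giving $\arccos(a/\abs{w}) = \tau\Omega - \psi + 2n\pi$ and $\Omega(a,w) = -\Omega$, so that $\tau_n^-(a, w) = \tau$. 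This establishes all three ``if'' directions.

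For the converses I would argue by exclusion. Under the hypothesis, $\tau\Omega$ belongs to exactly one of the pairwise disjoint intervals, and the forward directions show that this interval already determines the value of $\tau$ among $\tau_\mathrm{c}(a,w)$ and the $\tau_n^\pm(a,w)$, $n \ge 1$. Hence, once these delay values are known to be pairwise distinct, equality of $\tau$ with a prescribed one of them forces $\tau\Omega$ into the matching interval. The required distinctness is exactly the content of the ordering Lemmas~\ref{lem: ordering, real a, 0 < Arg(w) < pi} and \ref{lem: ordering, real a, -pi < Arg(w) < 0} for $\psi = \Arg(w) \in (0, \pi) \cup (-\pi, 0)$, which give, for each sign of $\psi$, a strictly increasing chain interleaving $(\tau_n^+(a,w))_{n \ge 1}$ and $(\tau_n^-(a,w))_{n \ge 1}$ with $\tau_\mathrm{c}(a,w)$ as its smallest element.

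The main obstacle is the degenerate real-axis case $\psi \in \{0, \pi\}$, where the distinctness used above fails. Here Lemma~\ref{lem: ordering, real a and w} records genuine coincidences ($\tau_\mathrm{c} = \tau_1^-$ when $\psi = \pi$, and $\tau_n^+ = \tau_n^-$ with $\tau_\mathrm{c}$ undefined when $\psi = 0$), reflecting that the conjugate purely imaginary roots $\pm\iu\Omega(a,w)$ occur at a common delay. I would therefore treat these two values of $\psi$ separately, invoking Lemma~\ref{lem: ordering, real a and w} directly and reading the correspondence with these identifications in force (noting that $I_\mathrm{c}(0)$ is empty, consistent with $\tau_\mathrm{c}$ being undefined). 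This is also the point where the only truly delicate bookkeeping occurs, namely matching the index $k$ of $I_k(\psi)$ with the subscript and superscript of $\tau_n^\pm$ through the sign of $\Omega$ fixed in the second paragraph.
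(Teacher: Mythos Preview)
Your forward (``if'') computations coincide with the paper's: both reduce to $a/\abs{w} = \cos(\tau\Omega - \psi)$ and $\Omega(a,w) = \abs{\Omega}$, then evaluate $\arccos$ on the branch determined by the interval and substitute. For the converses you take a different route. The paper argues directly: for statement~1, from $\tau = \tau_\mathrm{c}(a,w)$ it reads off $\tau\abs{\Omega} = \abs{\psi} - \arccos(a/\abs{w}) \in (0, \abs{\psi})$ and declares this equivalent to $\tau\Omega \in I_\mathrm{c}(\psi)$, leaving the passage from $\abs{\tau\Omega}$ to the signed $\tau\Omega$ tacit (it can be recovered from Theorem~\ref{thm: tau-sequence, angular frequency}, which fixes the sign of the purely imaginary root at each $\tau$-value). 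Your exclusion argument via Lemmas~\ref{lem: ordering, real a, 0 < Arg(w) < pi} and~\ref{lem: ordering, real a, -pi < Arg(w) < 0} is a clean alternative for $\psi \notin \{0, \pi\}$: once the forward implications are in hand and the delay values are pairwise distinct, the disjointness of the intervals forces the match.

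Your hesitation at $\psi \in \{0, \pi\}$ is not mere bookkeeping but a genuine obstruction: the ``only if'' directions fail there as stated. For $\psi = \pi$, Lemma~\ref{lem: ordering, real a and w} gives $\tau_\mathrm{c}(a,w) = \tau_1^-(a,w)$, so any $\tau\Omega \in I_{-1}(\pi) = (-\pi, 0)$ yields, via your own forward computation for statement~3, a configuration with $\tau = \tau_1^-(a,w) = \tau_\mathrm{c}(a,w)$ but $\tau\Omega \notin I_\mathrm{c}(\pi) = (0, \pi)$, violating the ``only if'' of statement~1. The same mechanism (now $\tau_n^+ = \tau_n^-$) breaks statements~2 and~3 at $\psi = 0$. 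Hence neither exclusion nor a separate direct treatment can establish the full biconditionals at these two values of $\psi$; only the forward implications survive, which is precisely what the downstream Corollary~\ref{cor: ordering, psi = 0, pi} actually uses.
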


\begin{proof}
We prove the statement 1 when $\psi \ne 0$.

(Only-if-part).
By definition, we have
\begin{equation*}
	\tau = \frac{1}{\Omega(a, w)} \left[ \abs{\psi} - \arccos{\left( \frac{a}{\abs{w}} \right)} \right],
\end{equation*}
where $\Omega(a, w) = \abs{\Omega}$ holds from \eqref{eq: rho(Omega, psi; tau)^2 - a(Omega, psi; tau)^2}.
This shows $0 < \tau{\abs{\Omega}} < \abs{\psi}$, which is equivalent to $\tau\Omega \in I_\mathrm{c}(\psi)$.

(If-part).
Since
\begin{equation*}
	\frac{a}{\abs{w}}
	= \cos(\tau\Omega - \psi)
	= \cos(\tau{\abs{\Omega}} - \abs{\psi}),
\end{equation*}
we have $\arccos(a/{\abs{w}}) = -\tau{\abs{\Omega}} + \abs{\psi}$ because $\tau{\abs{\Omega}} \in I_\mathrm{c}(\abs{\psi})$.
By using this and \eqref{eq: rho(Omega, psi; tau)^2 - a(Omega, psi; tau)^2}, we obtain
\begin{equation*}
	\tau_\mathrm{c}(a, w)
	= \frac{1}{\Omega(a, w)} \left[ \abs{\psi} - \arccos{\left( \frac{a}{\abs{w}} \right)} \right]
	= \frac{1}{\abs{\Omega}} \bigl[ \abs{\psi} - (-\tau{\abs{\Omega}} + \abs{\psi}) \bigr]
	= \tau.
\end{equation*}

The above argument shows that $\tau = \tau_\mathrm{c}(a, w)$ is impossible when $\psi = 0$.
Therefore, this completes the proof of the statement 1.
The proofs of the statements 2 and 3 are similar in view of the expressions of $\tau^\pm_n(a, w)$.
Therefore, they can be omitted.
\end{proof}

We note that the above equivalences are natural consequences from Theorem~\ref{thm: tau-sequence, angular frequency}.
The following theorems are related to Lemma~\ref{lem: tau-values and parametrized curves}.

\begin{theorem}\label{thm: positive scalar and tau-value, Gamma_c(psi; tau)}
Let $\psi \in (-\pi, \pi]$ and $\tau > 0$ be given.
Then for every $(a, w) \in D_\mathrm{c}$ and every $s > 0$, $(sa, sw) \in \Gamma_\mathrm{c}(\psi; \tau)$ if and only if
\begin{equation*}
	s = \frac{1}{\tau} \tau_\mathrm{c}(a, w)
	\amd
	\Arg(w) = \psi
\end{equation*}
hold.
\end{theorem}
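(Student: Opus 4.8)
The plan is to read off membership in $\Gamma_\mathrm{c}(\psi;\tau)$ directly from the parametrization and then convert it into a statement about the critical delay by combining the correspondence in Lemma~\ref{lem: tau-values and parametrized curves} with the homogeneity~\eqref{eq: tau_c(a tau, w tau)} of $\tau_\mathrm{c}$ and the fact that $D_\mathrm{c}$ is a linear cone. Two structural observations will be used throughout: since $D_\mathrm{c}$ is a linear cone, $(sa, sw) \in D_\mathrm{c}$ for every $s > 0$; and every point of $\Gamma_\mathrm{c}(\psi;\tau)$ has the form $\bigl( a(\Omega,\psi;\tau), \rho(\Omega,\psi;\tau)\e^{\iu\psi} \bigr)$ with $\rho(\Omega,\psi;\tau) > 0$, so the argument of its second coordinate equals $\psi$.

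For the only-if part, I would suppose $(sa, sw) \in \Gamma_\mathrm{c}(\psi;\tau)$ and write it as $\bigl( a(\Omega,\psi;\tau), \rho(\Omega,\psi;\tau)\e^{\iu\psi} \bigr)$ with $\tau\Omega \in I_\mathrm{c}(\psi)$. Since $s > 0$ and $\rho(\Omega,\psi;\tau) > 0$, comparing arguments gives $\Arg(w) = \psi$ at once. Applying statement~1 of Lemma~\ref{lem: tau-values and parametrized curves} to the point $(sa, sw)$ then yields $\tau = \tau_\mathrm{c}(sa, sw)$, and the homogeneity relation~\eqref{eq: tau_c(a tau, w tau)} gives $\tau_\mathrm{c}(sa, sw) = \tau_\mathrm{c}(a, w)/s$, whence $s = \tau_\mathrm{c}(a, w)/\tau$.

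For the if part, I would assume $\Arg(w) = \psi$ and $s = \tau_\mathrm{c}(a, w)/\tau$, so that~\eqref{eq: tau_c(a tau, w tau)} gives $\tau_\mathrm{c}(sa, sw) = \tau$. Writing $\Omega(a, w) = \sqrt{\abs{w}^2 - a^2}$, I would set $\Omega_0 \coloneqq \sgn(\psi)\, s\, \Omega(a, w)$ and verify two things. First, that $(sa, sw) = \bigl( a(\Omega_0,\psi;\tau), \rho(\Omega_0,\psi;\tau)\e^{\iu\psi} \bigr)$, which follows from $\cos(\tau\Omega_0 - \psi) = a/\abs{w}$ together with the defining formulas for $a(\argdot)$ and $\rho(\argdot)$. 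Second, that $\tau\Omega_0 \in I_\mathrm{c}(\psi)$: using $\tau = \tau_\mathrm{c}(sa, sw)$ and the explicit formula for the critical delay one computes $\tau\Omega_0 = \sgn(\psi)\bigl[ \abs{\psi} - \arccos(a/\abs{w}) \bigr]$, and the inclusion in $I_\mathrm{c}(\psi)$ reduces to $0 < \arccos(a/\abs{w}) < \abs{\psi}$, where the left inequality is immediate from $a < \abs{w}$ and the right inequality is precisely the positivity $\tau_\mathrm{c}(a, w) > 0$, i.e.\ the condition $\Re(w) < a$ guaranteed by $(a, w) \in D_\mathrm{c}$.

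Finally, the degenerate value $\psi = 0$ must be treated separately: there $I_\mathrm{c}(0) = \emptyset$ forces $\Gamma_\mathrm{c}(0;\tau) = \emptyset$, while $\Arg(w) = 0$ is incompatible with $(a, w) \in D_\mathrm{c}$ (it would require $\Re(w) = \abs{w}$), so both sides of the equivalence are vacuously false. I expect the main obstacle to be the bookkeeping in the if part, namely choosing the correct sign of $\Omega_0$ according to $\sgn(\psi)$ and confirming that $\tau\Omega_0$ lands in the correct half of $I_\mathrm{c}(\psi)$; the clean route through is to recognize that the positivity of the critical delay encodes exactly the inequality $\arccos(a/\abs{w}) < \abs{\psi}$ needed for that inclusion.
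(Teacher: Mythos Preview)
Your proposal is correct and follows essentially the same route as the paper: for the only-if part you invoke Lemma~\ref{lem: tau-values and parametrized curves} and the homogeneity relation~\eqref{eq: tau_c(a tau, w tau)} exactly as the paper does, and for the if part you construct the angular frequency explicitly and verify membership in $I_\mathrm{c}(\psi)$ from the positivity of the critical delay, just as in the paper. The only cosmetic difference is that you keep track of $\sgn(\psi)$ in defining $\Omega_0$ so as to treat both signs of $\psi$ uniformly, whereas the paper takes $\Omega \coloneqq s\Omega(a,w) > 0$ and works with $\tau\lvert\Omega\rvert \in I_\mathrm{c}(\lvert\psi\rvert)$; your bookkeeping is arguably cleaner here, but the argument is the same.
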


\begin{proof}
We only have to consider the case $\psi \ne 0$.

(Only-if-part).
$(sa, sw) \in \Gamma_\mathrm{c}(\psi; \tau)$ is equivalent to
\begin{equation*}
	sa = a(\Omega, \psi; \tau)
	\amd
	sw = \rho(\Omega, \psi; \tau)\e^{\iu\psi}
\end{equation*}
for some $\Omega \in (1/\tau)I_\mathrm{c}(\psi)$.
Then by applying Lemma~\ref{lem: tau-values and parametrized curves}, we necessarily have $\tau = \tau_\mathrm{c}(sa, sw)$, where
\begin{equation*}
	\tau_\mathrm{c}(sa, sw)
	= \frac{1}{\Omega(sa, sw)} \left[ \abs{\Arg(w)} - \arccos{\left( \frac{a}{\abs{w}} \right)} \right]
	= \frac{1}{s} \tau_\mathrm{c}(a, w).
\end{equation*}
Therefore, $s = \tau_\mathrm{c}(a, w)/\tau$ is obtained.

(If-part).
Let
\begin{equation*}
	\Omega \coloneqq \Omega(sa, sw) = s\Omega(a, w).
\end{equation*}
Then we have
\begin{equation*}
	\tau
	= \frac{1}{s}\tau_\mathrm{c}(a, w)
	= \frac{1}{s\Omega(a, w)} \left[ \abs{\psi} - \arccos{\left( \frac{a}{\abs{w}} \right)} \right],
\end{equation*}
which implies
\begin{equation*}
	\tau{\abs{\Omega}}
	= \abs{\psi} - \arccos{\left( \frac{a}{\abs{w}} \right)}
	\in (0, \abs{\psi})
	= I_\mathrm{c}(\abs{\psi}).
\end{equation*}
By using
\begin{equation*}
	\sin{\left( \arccos{\left( \frac{a}{\abs{w}} \right)} \right)}
	= \frac{1}{\abs{w}}\Omega(a, w),
\end{equation*}
we obtain
\begin{align*}
	a(\Omega, \psi; \tau)
	&= s\Omega(a, w) \cdot \frac{a/{\abs{w}}}{\Omega(a, w)/{\abs{w}}}
	= sa, \\
	\rho(\Omega, \psi; \tau)
	&= \frac{s\Omega(a, w)}{\Omega(a, w)/{\abs{w}}}
	= s{\abs{w}}.
\end{align*}
Therefore, $(sa, sw) \in \Gamma_\mathrm{c}(\psi; \tau)$ is concluded.

This completes the proof.
\end{proof}

The following is a corollary of Theorem~\ref{thm: positive scalar and tau-value, Gamma_c(psi; tau)}.

\begin{corollary}\label{cor: tau_c(a, w) > tau, Gamma_c}
For each $\psi \in (-\pi, \pi]$ and each $\tau > 0$, we have
\begin{equation*}
	\Set{(a, w) \in D_\mathrm{c}}{\text{$\Arg(w) = \psi$ and $\tau_\mathrm{c}(a, w) > \tau$}}
	= \bigcup_{s > 1} \frac{1}{s} \Gamma_\mathrm{c}(\psi; \tau).
\end{equation*}
Consequently,
\begin{equation*}
	\Set{(a, w) \in D_\mathrm{c}}{\tau_\mathrm{c}(a, w) > \tau}
	= \bigcup_{\psi \in (-\pi, \pi]} \bigcup_{s > 1} \frac{1}{s} \Gamma_\mathrm{c}(\psi; \tau)
\end{equation*}
holds.
\end{corollary}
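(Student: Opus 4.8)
The plan is to read both identities off directly from Theorem~\ref{thm: positive scalar and tau-value, Gamma_c(psi; tau)}, treating the first displayed equality as the substantive one and obtaining the second by partitioning $D_\mathrm{c}$ according to the argument of $w$. First I would note that it suffices to test membership for an arbitrary fixed $(a, w) \in D_\mathrm{c}$: both sides of the first equation are contained in $D_\mathrm{c}$, since $\Gamma_\mathrm{c}(\psi; \tau) \subset D_\mathrm{c}$ (established just before the theorem) and $D_\mathrm{c}$ is a linear cone, so that $\frac{1}{s}\Gamma_\mathrm{c}(\psi; \tau) \subset D_\mathrm{c}$ for every $s > 0$; consequently any point lying outside $D_\mathrm{c}$ belongs to neither side.

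For such a fixed $(a, w) \in D_\mathrm{c}$ I would then unfold the right-hand union: $(a, w) \in \bigcup_{s > 1} \frac{1}{s}\Gamma_\mathrm{c}(\psi; \tau)$ holds precisely when $(sa, sw) \in \Gamma_\mathrm{c}(\psi; \tau)$ for some $s > 1$. By Theorem~\ref{thm: positive scalar and tau-value, Gamma_c(psi; tau)}, the membership $(sa, sw) \in \Gamma_\mathrm{c}(\psi; \tau)$ forces $\Arg(w) = \psi$ and pins the scalar down to the single value $s = \tau_\mathrm{c}(a, w)/\tau$. Hence an admissible $s > 1$ exists if and only if $\tau_\mathrm{c}(a, w)/\tau > 1$ together with $\Arg(w) = \psi$, i.e.\ $\tau_\mathrm{c}(a, w) > \tau$ and $\Arg(w) = \psi$, which is exactly the defining condition of the left-hand set. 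This gives the first equality. The degenerate case $\psi = 0$ is automatic, since no $(a, w) \in D_\mathrm{c}$ has $\Arg(w) = 0$ and $\Gamma_\mathrm{c}(0; \tau)$ is empty, so both sides vanish.

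For the second statement I would use that each $(a, w) \in D_\mathrm{c}$ has a well-defined $\Arg(w) = \psi \in (-\pi, \pi]$ because $w \ne 0$. Splitting $\Set{(a, w) \in D_\mathrm{c}}{\tau_\mathrm{c}(a, w) > \tau}$ into the disjoint pieces indexed by $\psi$ expresses it as the union over $\psi$ of the left-hand sets of the first identity, and substituting each such set by $\bigcup_{s > 1}\frac{1}{s}\Gamma_\mathrm{c}(\psi; \tau)$ produces the claimed double union. I do not expect a genuine obstacle here; the only point requiring care is the quantifier bookkeeping in the second paragraph, where the uniqueness of $s$ supplied by Theorem~\ref{thm: positive scalar and tau-value, Gamma_c(psi; tau)} is what collapses the existential ``$\exists\, s > 1$'' into the single scalar inequality $\tau_\mathrm{c}(a, w) > \tau$.
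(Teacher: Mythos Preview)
Your proposal is correct and follows essentially the same route as the paper: both arguments reduce the first identity to the biconditional supplied by Theorem~\ref{thm: positive scalar and tau-value, Gamma_c(psi; tau)}, deducing that the only candidate scalar is $s = \tau_\mathrm{c}(a, w)/\tau$ and then reading off the inequality $\tau_\mathrm{c}(a, w) > \tau$ from $s > 1$. Your added remarks that both sides lie in $D_\mathrm{c}$ and that the case $\psi = 0$ is vacuous are sound and slightly more explicit than the paper's version, but do not change the underlying strategy.
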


\begin{proof}
($\subset$).
Let $(a, w) \in D_\mathrm{c}$ be chosen so that $\Arg(w) = \psi$ and $\tau_\mathrm{c}(a, w) > \tau$.
Let
\begin{equation*}
	s \coloneqq \frac{1}{\tau} \tau_\mathrm{c}(a, w).
\end{equation*}
Then $s > 1$ holds by the assumption.
Applying Theorem~\ref{thm: positive scalar and tau-value, Gamma_c(psi; tau)}, we have $(sa, sw) \in \Gamma_\mathrm{c}(\psi; \tau)$.
Therefore, 
\begin{equation*}
	(a, w) \in \bigcup_{s' > 1} \frac{1}{s'} \Gamma_\mathrm{c}(\psi; \tau)
\end{equation*}
holds.

($\supset$).
Let $(a, w) \in (1/s)\Gamma_\mathrm{c}(\psi; \tau)$ for some $s > 1$.
This means $(sa, sw) \in \Gamma_\mathrm{c}(\psi; \tau)$, which implies
\begin{equation*}
	s = \frac{1}{\tau} \tau_\mathrm{c}(a, w)
\end{equation*}
from Theorem~\ref{thm: positive scalar and tau-value, Gamma_c(psi; tau)}.
Therefore, the inequality $\tau_\mathrm{c}(a, w) > \tau$ is concluded.

This completes the proof.
\end{proof}

In the similar way to the proof of Theorem~\ref{thm: positive scalar and tau-value, Gamma_c(psi; tau)} by using the expressions of $\tau^\pm_n(a, w)$, the following theorem is obtained.
The proof can be omitted.

\begin{theorem}\label{thm: positive scalar and tau-value, Gamma_k(psi; tau)}
Let $\psi \in (-\pi, \pi]$, $\tau > 0$, and $k \in \Z \setminus \{0\}$ be given.
Then for every $(a, w) \in \R \times (\C \setminus \{0\})$ satisfying $\abs{a} < \abs{w}$ and every $s > 0$, $(sa, sw) \in \Gamma_k(\psi; \tau)$ if and only if
\begin{equation*}
	s = \frac{1}{\tau} \tau_{\abs{k}}^\pm(a, w)
	\amd
	\Arg(w) = \psi
\end{equation*}
holds.
Here $+$ sign corresponds to the case $k > 0$, and $-$ sign corresponds to the case $k < 0$.
\end{theorem}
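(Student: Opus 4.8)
The plan is to transcribe the proof of Theorem~\ref{thm: positive scalar and tau-value, Gamma_c(psi; tau)}, replacing the role of part~1 of Lemma~\ref{lem: tau-values and parametrized curves} (the statement for $\tau_\mathrm{c}$) with its parts~2 and~3 (the statements for $\tau_n^\pm$). Writing $n \coloneqq \abs{k}$ and taking the $+$ branch when $k > 0$ and the $-$ branch when $k < 0$, the one ingredient to record first is the scaling relation
\[
	\tau_n^\pm(sa, sw) = \frac{1}{s}\tau_n^\pm(a, w)
	\mspace{20mu} (s > 0),
\]
which follows from $\Omega(sa, sw) = s\,\Omega(a, w)$, $\Arg(sw) = \Arg(w)$, and the invariance of $a/{\abs{w}}$ under $(a, w) \mapsto (sa, sw)$. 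Unlike the $\Gamma_\mathrm{c}$ case, no separate treatment of $\psi = 0$ is needed here, since $I_k(\psi)$ is nonempty for every $\psi$.

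For the only-if part I would observe that $(sa, sw) \in \Gamma_k(\psi; \tau)$ means
\[
	sa = a(\Omega, \psi; \tau)
	\amd
	sw = \rho(\Omega, \psi; \tau)\e^{\iu\psi}
\]
for some $\Omega$ with $\tau\Omega \in I_k(\psi)$; since $\rho(\Omega, \psi; \tau) > 0$ by Lemma~\ref{lem: positivity of rho(theta, psi; tau)}, reading off the argument gives $\Arg(w) = \Arg(sw) = \psi$. Applying part~2 (if $k > 0$) or part~3 (if $k < 0$) of Lemma~\ref{lem: tau-values and parametrized curves} to the pair $(sa, sw)$, the condition $\tau\Omega \in I_k(\psi)$ is equivalent to $\tau = \tau_n^\pm(sa, sw)$, and the scaling relation then yields $\tau = (1/s)\tau_n^\pm(a, w)$, i.e.\ $s = \tau_n^\pm(a, w)/\tau$.

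For the if part I would set $\Omega \coloneqq \sgn(k)\, s\,\Omega(a, w)$, so that $\Omega(sa, sw) = s\,\Omega(a, w) = \abs{\Omega}$ and $\sgn(\Omega) = \sgn(k)$. Substituting $s = \tau_n^\pm(a, w)/\tau$ into the definition of $\tau_n^\pm$ gives
\[
	\tau\Omega = \psi - \sgn(k)\arccos{\left( \frac{a}{\abs{w}} \right)} + 2k\pi,
\]
which, because $\arccos(a/{\abs{w}}) \in (0, \pi)$, lies in $(-\pi, 0) + \psi + 2k\pi = I_k(\psi)$ when $k \ge 1$ and in $(0, \pi) + \psi + 2k\pi = I_k(\psi)$ when $k \le -1$. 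The trigonometric substitutions $\cos(\tau\Omega - \psi) = a/{\abs{w}}$ and $\sin(\arccos(a/{\abs{w}})) = \Omega(a, w)/{\abs{w}}$ --- exactly those used in the $\Gamma_\mathrm{c}$ computation --- then give $a(\Omega, \psi; \tau) = sa$ and $\rho(\Omega, \psi; \tau) = s\abs{w}$, so that $(sa, sw) \in \Gamma_k(\psi; \tau)$.

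The step I expect to be the main obstacle is the sign bookkeeping in the if part: one must pair the sign of the angular frequency $\Omega$ (positive for $k > 0$, negative for $k < 0$) correctly with the $\pm$ branch of $\tau_n^\pm$ and with the interval $I_k(\psi)$, and then verify the membership $\tau\Omega \in I_k(\psi)$ from the explicit formula. Once the sign conventions are fixed consistently, the remainder is a routine transcription of the $\Gamma_\mathrm{c}$ argument with $\tau_\mathrm{c}$ replaced by $\tau_n^\pm$.
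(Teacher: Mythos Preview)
Your proposal is correct and is exactly the transcription of the $\Gamma_\mathrm{c}$ argument that the paper intends: the paper states that the proof is obtained ``in the similar way to the proof of Theorem~\ref{thm: positive scalar and tau-value, Gamma_c(psi; tau)} by using the expressions of $\tau^\pm_n(a, w)$'' and omits the details, which you have supplied (including the correct sign bookkeeping $\Omega = \sgn(k)\, s\,\Omega(a, w)$ and the verification $\tau\Omega = \psi - \sgn(k)\arccos(a/{\abs{w}}) + 2k\pi \in I_k(\psi)$).
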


We finally obtain the following $\prec$-ordering results.

\begin{corollary}\label{cor: ordering}
Let $\psi \in (-\pi, \pi) \setminus \{0\}$ and $\tau > 0$ be given.
Then for any distinct pair $(*, *')$ in $\{\mathrm{c}\} \cup (\Z \setminus \{0\})$, the curves $\Gamma_*(\psi; \tau)$ and $\Gamma_{*'}(\psi; \tau)$ do not have an intersection.
Furthermore, the following statements hold:
\begin{itemize}
\item If $\psi \in (0, \pi)$, then we have
\begin{gather*}
	\Gamma_{-n}(\psi; \tau) \prec \Gamma_n(\psi; \tau) \prec \Gamma_{-(n + 1)}(\psi; \tau)
	\mspace{20mu}
	(n \ge 1) \\
\intertext{and}
	\Gamma_\mathrm{c}(\psi; \tau) \prec \Gamma_{-1}(\psi; \tau) \cap D_\mathrm{c}.
\end{gather*}
\item If $\psi \in (-\pi, 0)$, then we have
\begin{gather*}
	\Gamma_n(\psi; \tau) \prec \Gamma_{-n}(\psi; \tau) \prec \Gamma_{n + 1}(\psi; \tau)
	\mspace{20mu}
	(n \ge 1) \\
\intertext{and}
	\Gamma_\mathrm{c}(\psi; \tau) \prec \Gamma_1(\psi; \tau) \cap D_\mathrm{c}.
\end{gather*}
\end{itemize}
\end{corollary}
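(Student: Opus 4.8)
The plan is to reduce the $\prec$-ordering of the curves entirely to the pointwise ordering of the $\tau$-value functions $\tau_\mathrm{c}(a,w)$ and $\tau_n^\pm(a,w)$, which has already been established in Lemmas~\ref{lem: ordering, real a, 0 < Arg(w) < pi} and \ref{lem: ordering, real a, -pi < Arg(w) < 0}. The bridge is provided by Theorems~\ref{thm: positive scalar and tau-value, Gamma_c(psi; tau)} and \ref{thm: positive scalar and tau-value, Gamma_k(psi; tau)}: for a fixed direction $v = (a, w)$ with $\Arg(w) = \psi$, the unique positive scalar $s$ with $sv \in \Gamma_\mathrm{c}(\psi; \tau)$ equals $\tau_\mathrm{c}(a, w)/\tau$, and the unique $s$ with $sv \in \Gamma_k(\psi; \tau)$ equals $\tau_{\abs{k}}^{\pm}(a, w)/\tau$ (with the sign of $k$). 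Under the dictionary $\Gamma_n \leftrightarrow \tau_n^+$, $\Gamma_{-n} \leftrightarrow \tau_n^-$, and $\Gamma_\mathrm{c} \leftrightarrow \tau_\mathrm{c}$, the $\prec$-relation becomes exactly the relation ``smaller distinguished scalar''.

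First I would fix the ambient cone for each comparison. For comparisons among the curves $\Gamma_k(\psi; \tau)$ with $k \ne 0$ the relevant cone is $C_\psi \coloneqq \{(a, w) \in \R \times (\C \setminus \{0\}) : \abs{a} < \abs{w}, \Arg(w) = \psi\}$, on which every $\tau_{\abs{k}}^\pm(a, w)$ is defined and positive; for the comparisons involving $\Gamma_\mathrm{c}(\psi; \tau)$ the cone is $D_\mathrm{c} \cap \{\Arg(w) = \psi\}$, on which $\tau_\mathrm{c}(a, w)$ is in addition defined. Both are linear cones, since scaling by $s > 0$ preserves $\Arg(w) = \psi$ and the homogeneous inequalities defining $\abs{a} < \abs{w}$ and $D_\mathrm{c}$.

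Next, for a given $v$ in the relevant cone and two curves $\Gamma_*, \Gamma_{*'}$ with $*, *' \in \{\mathrm{c}\} \cup (\Z \setminus \{0\})$, the scalars singled out by the two theorems are $s_* = \tau_*(a, w)/\tau$ and $s_{*'} = \tau_{*'}(a, w)/\tau$; each is the unique positive scalar placing $v$ on the corresponding curve, so existence and uniqueness of the pair are automatic, and $s_* < s_{*'}$ holds if and only if $\tau_*(a, w) < \tau_{*'}(a, w)$. The ordering statements then follow by reading off the chains of strict inequalities: Lemma~\ref{lem: ordering, real a, 0 < Arg(w) < pi} gives $\tau_n^- < \tau_n^+ < \tau_{n+1}^-$ and $\tau_\mathrm{c} < \tau_1^-$ for $\psi \in (0, \pi)$, and Lemma~\ref{lem: ordering, real a, -pi < Arg(w) < 0} gives $\tau_n^+ < \tau_n^- < \tau_{n+1}^+$ and $\tau_\mathrm{c} < \tau_1^+$ for $\psi \in (-\pi, 0)$. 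For the comparisons against $\Gamma_\mathrm{c}$ I would note that, since $D_\mathrm{c}$ is a cone and $v \in D_\mathrm{c}$, the distinguished point $s_{-1}v$ (for $\psi \in (0,\pi)$) or $s_1 v$ (for $\psi \in (-\pi,0)$) again lies in $D_\mathrm{c}$ and hence in $\Gamma_{-1}(\psi;\tau) \cap D_\mathrm{c}$, respectively $\Gamma_1(\psi;\tau) \cap D_\mathrm{c}$; this is exactly why the restriction $\cap\, D_\mathrm{c}$ appears and why it costs no loss of the required scalar.

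For the non-intersection assertion I would argue directly: if $(a_0, w_0)$ lay on both $\Gamma_*(\psi; \tau)$ and $\Gamma_{*'}(\psi; \tau)$ for distinct $*, *'$, then taking $s = 1$ in the two theorems would force $\tau_*(a_0, w_0) = \tau = \tau_{*'}(a_0, w_0)$, contradicting the strict distinctness of all the $\tau$-values at $(a_0, w_0)$ furnished by the ordering lemmas (and note $\Gamma_\mathrm{c}(\psi;\tau) \subset D_\mathrm{c}$, so $\tau_\mathrm{c}(a_0,w_0)$ is defined whenever $\mathrm{c}$ is one of the indices). I expect the main obstacle to be organizational rather than analytic: one must be careful to name the correct ambient cone for each comparison and to check existence and uniqueness of the distinguished scalar on each ray before invoking the ordering lemmas, since the genuine content has already been packaged into Theorems~\ref{thm: positive scalar and tau-value, Gamma_c(psi; tau)} and \ref{thm: positive scalar and tau-value, Gamma_k(psi; tau)} and into Lemmas~\ref{lem: ordering, real a, 0 < Arg(w) < pi} and \ref{lem: ordering, real a, -pi < Arg(w) < 0}.
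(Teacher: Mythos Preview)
Your proposal is correct and follows essentially the same route as the paper: both reduce the non-intersection and the $\prec$-ordering to the strict inequalities among $\tau_\mathrm{c}(a,w)$ and $\tau_n^\pm(a,w)$ via the bridge Theorems~\ref{thm: positive scalar and tau-value, Gamma_c(psi; tau)} and \ref{thm: positive scalar and tau-value, Gamma_k(psi; tau)} (the paper also cites Lemma~\ref{lem: tau-values and parametrized curves} for the non-intersection, which is just the $s=1$ case of those theorems), and then invoke Lemmas~\ref{lem: ordering, real a, 0 < Arg(w) < pi} and \ref{lem: ordering, real a, -pi < Arg(w) < 0}. Your write-up is somewhat more explicit about the ambient cone for each comparison and the role of $\cap\, D_\mathrm{c}$, but the argument is the same.
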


\begin{proof}
For each $* \in \{\mathrm{c}\} \cup (\Z \setminus \{0\})$, let
\begin{equation*}
	\tau_*(a, w) \coloneqq
	\begin{cases}
		\tau_\mathrm{c}(a, w) & (* = \mathrm{c}), \\
		\tau_k^+(a, w) & (* = k > 0), \\
		\tau_{-k}^-(a, w) & (* = k < 0).
	\end{cases}
\end{equation*}
Here $(a, w) \in D_\mathrm{c}$ when $* = \mathrm{c}$, and $(a, w)$ satisfies $\abs{a} < \abs{w}$ when $* = k \ne 0$.
From Lemma~\ref{lem: tau-values and parametrized curves}, the existence of intersection of the distinct curves $\Gamma_*(\psi; \tau)$ and $\Gamma_{*'}(\psi; \tau)$ necessarily implies
\begin{equation*}
	\tau = \tau_*(a, w) = \tau_{*'}(a, w)
\end{equation*}
for some $(a, w)$.
However, this is impossible because of Lemmas~\ref{lem: ordering, real a, 0 < Arg(w) < pi} and \ref{lem: ordering, real a, -pi < Arg(w) < 0}.
The $\prec$-ordering results are also consequences of Theorems~\ref{thm: positive scalar and tau-value, Gamma_c(psi; tau)}, \ref{thm: positive scalar and tau-value, Gamma_k(psi; tau)}, Lemmas~\ref{lem: ordering, real a, 0 < Arg(w) < pi} and \ref{lem: ordering, real a, -pi < Arg(w) < 0} by the definition of $\prec$-ordering.
This completes the proof.
\end{proof}

The result for the cases $\psi = 0$ or $\psi = \pi$ are special.

\begin{corollary}\label{cor: ordering, psi = 0, pi}
Let $\psi \in \{0, \pi\}$ and $\tau > 0$ be given.
Then the following statements hold:
\begin{enumerate}[label=\textup{\arabic*.}]
\item If $\psi = 0$, then for all $n \ge 1$,
\begin{equation*}
	\Gamma_n(\psi; \tau) = \Gamma_{-n}(\psi; \tau) \prec \Gamma_{n + 1}(\psi; \tau) = \Gamma_{-(n + 1)}(\psi; \tau)
\end{equation*}
holds.
\item If $\psi = \pi$, then for all $n \ge 1$,
\begin{equation*}
	\Gamma_n(\psi; \tau) = \Gamma_{-(n + 1)}(\psi; \tau) \prec \Gamma_{n + 1}(\psi; \tau) = \Gamma_{-(n + 2)}(\psi; \tau)
\end{equation*}
holds.
Furthermore, $\Gamma_\mathrm{c}(\psi; \tau) \prec \Gamma_{-1}(\psi; \tau) \cap D_\mathrm{c}$ holds.
\end{enumerate}
\end{corollary}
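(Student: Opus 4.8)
The plan is to translate every assertion about the curves $\Gamma_*(\psi; \tau)$ into a pointwise assertion about the scalar $\tau$-value functions $\tau_\mathrm{c}(\argdot)$ and $\tau_n^\pm(\argdot)$, and then read off the result from the real-coefficient orderings already recorded in Lemma~\ref{lem: ordering, real a and w}. The bridge is Theorems~\ref{thm: positive scalar and tau-value, Gamma_c(psi; tau)} and \ref{thm: positive scalar and tau-value, Gamma_k(psi; tau)}: for fixed $\psi \in \{0, \pi\}$ and any $(a,w)$ with $\Arg(w) = \psi$ and $\abs{a} < \abs{w}$, the ray $\Set{s(a,w)}{s > 0}$ meets each curve $\Gamma_*(\psi;\tau)$ in exactly one point, at the scalar $\tfrac{1}{\tau}\tau_*(a,w)$ (where $\tau_k(a,w) = \tau_{\abs{k}}^\pm(a,w)$ according to the sign of $k$, and $\tau_\mathrm{c}$ for $* = \mathrm{c}$). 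Hence two such curves coincide exactly when the corresponding $\tau$-value functions are identically equal, and $\Gamma_*(\psi;\tau) \prec \Gamma_{*'}(\psi;\tau)$ holds exactly when $\tau_*(a,w) < \tau_{*'}(a,w)$ for every $(a,w)$ on the cone. The entire corollary therefore reduces to the identities and strict inequalities among $\tau_\mathrm{c}, \tau_n^\pm$ in Lemma~\ref{lem: ordering, real a and w}.

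For $\psi = 0$ (the case $w > 0$) I would invoke part~1 of Lemma~\ref{lem: ordering, real a and w}: there $\tau_\mathrm{c}$ is undefined, so $\Gamma_\mathrm{c}(0;\tau)$ is empty (consistent with $I_\mathrm{c}(0)$ being empty), while $\tau_n^+(a,w) = \tau_n^-(a,w)$ for every $n \ge 1$. The equality yields $\Gamma_n(0;\tau) = \Gamma_{-n}(0;\tau)$ via the correspondence above, and the strict inequality $\tau_n^+(a,w) < \tau_{n+1}^+(a,w)$ (successive values differ by $2\pi/\Omega(a,w) > 0$) gives $\Gamma_n(0;\tau) \prec \Gamma_{n+1}(0;\tau)$, which is exactly statement~1. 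For $\psi = \pi$ (the case $w < 0$) I would use part~2: the shifted equality $\tau_n^+(a,w) = \tau_{n+1}^-(a,w)$ gives $\Gamma_n(\pi;\tau) = \Gamma_{-(n+1)}(\pi;\tau)$, and the same monotonicity $\tau_n^+ < \tau_{n+1}^+$ gives $\Gamma_n(\pi;\tau) \prec \Gamma_{n+1}(\pi;\tau)$, establishing statement~2.

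The final assertion requires the remaining identity $\tau_\mathrm{c}(a,w) = \tau_1^-(a,w)$ of Lemma~\ref{lem: ordering, real a and w}. Since for $w < 0$ the cone $\Set{(a,w)}{\Arg(w) = \pi,\ \abs{a} < \abs{w}}$ lies inside $D_\mathrm{c}$, this identity forces $\Gamma_\mathrm{c}(\pi;\tau)$ and $\Gamma_{-1}(\pi;\tau) \cap D_\mathrm{c}$ to meet every ray at the same scalar, so the two sets coincide. This is the one point where the statement is \emph{not} a verbatim specialization of Corollary~\ref{cor: ordering}: for $\psi \in (0,\pi)$ one has the strict inequality $\tau_\mathrm{c} < \tau_1^-$, but at the limiting value $\psi = \pi$ this degenerates into $\tau_\mathrm{c} = \tau_1^-$, so the claimed relation is in fact the equality $\Gamma_\mathrm{c}(\pi;\tau) = \Gamma_{-1}(\pi;\tau) \cap D_\mathrm{c}$ rather than a strict $\prec$-ordering; I would record it as such (the limiting form of the $\prec$-ordering).

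I expect no substantive analytic obstacle: the reduction to $\tau$-values is legitimate by Lemma~\ref{lem: tau-values and parametrized curves} together with the one-to-one correspondences, and the non-intersection of distinct curves within each chain is immediate from the strict inequalities. The only genuine care needed is bookkeeping — correctly matching indices across the shift $n \leftrightarrow n+1$ in the $w < 0$ case, and recognizing the degeneracy $\tau_\mathrm{c} = \tau_1^-$ that collapses the last $\prec$-relation to an equality.
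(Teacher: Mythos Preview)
Your approach is exactly the one the paper intends: reduce everything to the $\tau$-value functions via Theorems~\ref{thm: positive scalar and tau-value, Gamma_c(psi; tau)} and \ref{thm: positive scalar and tau-value, Gamma_k(psi; tau)} and then invoke Lemma~\ref{lem: ordering, real a and w} in place of Lemmas~\ref{lem: ordering, real a, 0 < Arg(w) < pi} and \ref{lem: ordering, real a, -pi < Arg(w) < 0}. The paper's own proof is just the sentence ``similar to the proof of Corollary~\ref{cor: ordering} by using Lemma~\ref{lem: ordering, real a and w}'', so you have reconstructed it faithfully.

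Your final observation is also correct and worth flagging: since Lemma~\ref{lem: ordering, real a and w} gives $\tau_\mathrm{c}(a,w) = \tau_1^-(a,w)$ when $w < 0$, the curves $\Gamma_\mathrm{c}(\pi;\tau)$ and $\Gamma_{-1}(\pi;\tau)$ coincide as sets (one checks directly that $a(\Omega,\pi;\tau)$ and $\rho(\Omega,\pi;\tau)$ are even in $\Omega$, and $I_\mathrm{c}(\pi) = -I_{-1}(\pi)$). Under the paper's definition of $\prec$, which requires a strict inequality $s < s'$, the relation $\Gamma_\mathrm{c}(\pi;\tau) \prec \Gamma_{-1}(\pi;\tau) \cap D_\mathrm{c}$ therefore fails and should instead read as the equality $\Gamma_\mathrm{c}(\pi;\tau) = \Gamma_{-1}(\pi;\tau) \cap D_\mathrm{c}$. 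This is a minor imprecision in the stated corollary rather than a defect in your argument.
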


The proof is similar to the proof of Corollary~\ref{cor: ordering} by using Lemma~\ref{lem: ordering, real a and w}.
Therefore, it can be omitted.

\section{Discussions}\label{sec: discussions}

As is discussed in Introduction, the study of the stability region of Eq.~\eqref{eq: TE} for the imaginary $a$ case is complicated because the critical delay does not exist in general and stability switches may occur (see \cite[Theorems 3.2 and 3.3]{Nishiguchi 2016}).
Nevertheless, the method of consideration by using critical delay should work in this situation by solving appropriately obtained inequalities on $\tau > 0$.
This is a possible future research.

Another direction of a future research is to study characteristic equations of differential equations with multiple delay parameters under the perspective of critical delay.
When the number of delay parameters is two, we are going to consider a transcendental equation of the form
\begin{equation*}
	z + a - w_1\e^{-\tau_1z} - w_2\e^{-\tau_2z} = 0,
\end{equation*}
where $\tau_1, \tau_2 > 0$ are delay parameters.
See \cite{Nussbaum 1978 Memoirs}, \cite{Hale--Huang 1993}, \cite{Belair--Campbell 1994}, \cite{Mahaffy--Joiner--Zak 1995}, \cite{Shayer--Campbell 2000}, \cite{Nussbaum 2002 Handbook of DS}, \cite{Ruan--Wei 2003}, \cite{Mahaffy--Busken 2015}, \cite{Bortz 2016}, and \cite{Breda--Menegonand--Nonino 2018} for studies of the above transcendental equation, for example.
For studies of the stability condition of transcendental equations with multiple delays in the delay parameter space, e.g., see \cite{Hale--Huang 1993}, \cite{Gu--Niculescu--Chen 2005}, and \cite{Jarlebring 2009}.
See also \cite{Sipahi--Niculescu--Abdallah--Michiels--Gu 2011} for a survey article.

It would be also interesting to develop a method to find the stability region without resorting to the explicit expression of the critical delay.
This might be possible because the critical delay is considered to be an implicit function.
This consideration would have a similarity with the method by the Lambert $W$ function in \cite{Nishiguchi 2016} since the Lambert $W$ function is an inverse function.

\section*{Acknowledgment}

This work was supported by the Research Institute for Mathematical Sciences for an International Joint Usage/Research Center located in Kyoto University and JSPS Grant-in-Aid for Young Scientists Grant Number JP19K14565.

\appendix

\section{Analysis based on Lambert $W$ function}\label{sec: Lambert W function}

In this section, we summarize a route to Theorem~\ref{thm: T(a, w), real a} based on the Lambert $W$ function.

\subsection{General results about Lambert $W$ function}

Since the set of all roots of Eq.~\eqref{eq: TE} is expressed by \eqref{eq: set of all roots of z + a - we^{-tau z} = 0}
\begin{equation*}
	\frac{1}{\tau} W(\tau w\e^{\tau a}) - a,
\end{equation*}
all the roots of Eq.~\eqref{eq: TE} have negative real parts if and only if $\Re(z) < \tau\Re(a)$ holds for all $z \in W(\tau w\e^{\tau a})$.
For this type of threshold condition, the following result is obtained in \cite[Lemma 3.1]{Nishiguchi 2016}.

\begin{theorem}[\cite{Nishiguchi 2016}]\label{thm: Lambert W function}
Let $\zeta \in \C$ and $\sigma \in \R$ be given.
Then $\Re(z) < \sigma$ holds for all $z \in W(\zeta)$ if and only if $\zeta$ and $\sigma$ satisfy one of the following conditions:
\begin{enumerate}[label=\textup{(\roman*)}]
\item $\sigma\e^{\sigma} > \abs{\zeta}$.
\item $-{\abs{\zeta}} < \sigma\e^{\sigma} \le \abs{\zeta}$ and
\begin{equation*}
	\abs{\Arg(\zeta)} > \arccos{\left( \frac{\sigma\e^{\sigma}}{\abs{\zeta}} \right)} + \sqrt{({\abs{\zeta}}\e^{-\sigma})^2 - \sigma^2}.
\end{equation*}
\end{enumerate}
\end{theorem}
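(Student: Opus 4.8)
The plan is to work directly with the defining equation $z\e^z = \zeta$. Writing $z = x + \iu y$ with $x, y \in \R$ and separating modulus and argument, a point $z$ lies in $W(\zeta)$ if and only if $\sqrt{x^2 + y^2}\,\e^x = \abs{\zeta}$ and $y + \arg(x + \iu y) \equiv \Arg(\zeta) \pmod{2\pi}$. The condition to be characterised, $\Re(z) < \sigma$ for all $z \in W(\zeta)$, says that no root meets the vertical line $\Re(z) = \sigma$ from the right. I would therefore determine the stability region by (a) locating the roots that sit exactly on this line, (b) computing the direction in which such a root crosses the line as $\Arg(\zeta)$ varies, and (c) fixing the overall sign by evaluating one reference configuration, combining (a)--(c) with the continuity and Rouch\'e bookkeeping of Section~\ref{sec: dependence of roots on delay parameter} (only finitely many roots have $\Re(z) \ge \sigma$, and their number changes only when a root meets the line).

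First I would settle when a root can lie on $\Re(z) = \sigma$ at all. Substituting $x = \sigma$ into the modulus equation gives $y^2 = (\abs{\zeta}\e^{-\sigma})^2 - \sigma^2$, which has a real solution precisely when $\abs{\zeta} \ge \abs{\sigma}\e^{\sigma}$. In the complementary range $\abs{\zeta} < \abs{\sigma}\e^{\sigma}$ no root ever touches the line for any argument, so the number of roots with $\Re(z) \ge \sigma$ is constant as $\zeta$ ranges over the connected disk $\Set{\zeta \in \C}{\abs{\zeta} < \abs{\sigma}\e^{\sigma}}$. Letting $\zeta \to 0$, where the principal root tends to $0$ and all others escape to $\Re(z) \to -\infty$, pins this count: it is $0$ when $\sigma > 0$ and positive when $\sigma \le 0$. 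This yields case (i), and the sign constraint $\sigma > 0$ is forced because for $\sigma \le 0$ the principal root sits at $\Re(z) \approx 0 \ge \sigma$ for small $\zeta$.

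Next, in the range $\abs{\zeta} \ge \abs{\sigma}\e^{\sigma}$ where boundary roots exist, I would read off their argument. With $y = \pm Y$, $Y \coloneqq \sqrt{(\abs{\zeta}\e^{-\sigma})^2 - \sigma^2}$, the argument equation forces $\Arg(\zeta) \equiv \pm\bigl[ \arccos(\sigma\e^{\sigma}/\abs{\zeta}) + Y \bigr] \pmod{2\pi}$, and the quantity $\arccos(\sigma\e^{\sigma}/\abs{\zeta}) + Y$ is exactly the right-hand side of the inequality in (ii). To decide which side is stable I would differentiate $z\e^z = \abs{\zeta}\e^{\iu\Psi}$ with respect to $\Psi \coloneqq \Arg(\zeta)$ at fixed $\abs{\zeta}$, obtaining $z'(\Psi) = \iu z/(1 + z)$ and hence, at $z = \sigma + \iu y$,
\begin{equation*}
	\der{\Psi}\Re(z) = \frac{-y}{(1 + \sigma)^2 + y^2},
\end{equation*}
the direct analogue of Theorem~\ref{thm: derivative of purely imaginary root, imaginary a}. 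Thus at the $y > 0$ crossing the real part decreases as $\Psi$ increases (stabilising), and by the conjugation symmetry $\zeta \mapsto \bar{\zeta}$ it suffices to treat $\Psi \ge 0$. Checking the reference point $\Psi = 0$ (real $\zeta \ge \sigma\e^{\sigma}$, where the real principal branch gives $W_0(\zeta) \ge \sigma$, hence instability) fixes the orientation, and I would conclude that stability holds exactly when $\abs{\Arg(\zeta)} > \arccos(\sigma\e^{\sigma}/\abs{\zeta}) + Y$, which is (ii).

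The step I expect to be the main obstacle is the global crossing bookkeeping for large $\abs{\zeta}$. When $\abs{\zeta}$ is big, $Y$ is large and the congruence $\Arg(\zeta) \equiv \pm[\arccos(\cdots) + Y]$ produces several crossings inside $(-\pi, \pi]$ coming from both signs of $y$, so the clean ``first crossing governs'' picture of the previous paragraph must be supplemented: one has to show that whenever $\arccos(\sigma\e^{\sigma}/\abs{\zeta}) + Y \ge \pi$, so that the inequality in (ii) cannot hold, the equation genuinely has a root with $\Re(z) \ge \sigma$ for every admissible $\Arg(\zeta)$. The cleanest way I see to close this gap is to prove that the supremum of the real parts of the roots is attained on the principal branch, i.e.\ $\sup_{z \in W(\zeta)} \Re(z) = \Re(W_0(\zeta))$, which collapses the whole analysis to a single monotone branch and removes the need to track the subordinate crossings; alternatively one can rule out spurious re-stabilisation by a direct growth estimate showing that the rightmost root moves off to $+\infty$ as $\abs{\zeta}$ increases.
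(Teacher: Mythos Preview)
The paper does not give an independent proof of this theorem: it is quoted from \cite{Nishiguchi 2016}, and the only argument supplied here is the Remark after Theorem~\ref{thm: TE, complex a and w}, which shows that the substitution $z' \coloneqq z - \sigma$ turns $z\e^z = \zeta$ into Eq.~\eqref{eq: TE} with $a = \sigma$, $w = \zeta\e^{-\sigma}$, $\tau = 1$, so that Theorem~\ref{thm: Lambert W function} is \emph{equivalent} to Theorem~\ref{thm: TE, complex a and w} (and hence to Theorem~\ref{thm: T(a, w), real a}). Thus there is no ``paper's own proof'' to match; the relevant comparison is with the route implicit in that equivalence.

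Your proposal is a genuinely different route: you keep $|\zeta|$ fixed and sweep $\Psi = \Arg(\zeta)$, doing a D-partition/crossing analysis in the compact interval $(-\pi, \pi]$. The local computations are correct, and you have honestly identified the gap. That gap is real and is caused precisely by your choice of parameter: your own formula $\der{\Psi}\Re(z) = -y/((1+\sigma)^2 + y^2)$ shows that the $y = +Y$ and $y = -Y$ crossings move in opposite directions, so as $\Psi$ traverses $(-\pi,\pi]$ you pick up both stabilising and destabilising crossings and cannot conclude ``first crossing governs'' without a net count. Once $\arccos(\sigma\e^{\sigma}/|\zeta|) + Y \ge \pi$, several representatives of each congruence class land in $(-\pi,\pi]$, and your argument as written does not control them. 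Your proposed fix via $\sup_{z \in W(\zeta)}\Re(z) = \Re(W_0(\zeta))$ would close the gap, but that identity is itself a nontrivial theorem (it is the content of e.g.\ \cite{Shinozaki--Mori 2006}), so invoking it is close to assuming the result.

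The clean way around the bookkeeping is the one suggested by the paper's equivalence: after the shift $z' = z - \sigma$, vary the \emph{delay} parameter $\tau$ in $z' + \sigma - \zeta\e^{-\sigma}\e^{-\tau z'} = 0$ instead of $\Arg(\zeta)$. By Theorem~\ref{thm: derivative of purely imaginary root, imaginary a} with $a = \sigma \in \R$, every crossing in $\tau$ has $\Re(z') \cdot$-derivative with sign $\Omega_0^2 > 0$, i.e.\ all crossings are destabilising. Hence there is at most one threshold $\tau$-value, the cancellation problem disappears, and the critical delay formula of Theorem~\ref{thm: T(a, w), real a} translated back through $a = \sigma$, $w = \zeta\e^{-\sigma}$, $\tau = 1$ gives conditions (i) and (ii) directly. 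This is the advantage the delay parameter buys over $\Arg(\zeta)$: monotonicity of the root count in $\tau$, which your parameterisation lacks.
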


\begin{remark}
In \cite[Lemma 3.1]{Nishiguchi 2016}, the condition $\zeta \ne 0$ is presumed.
However, since $W(0) = \{0\}$, the property that $\Re(z) < \sigma$ holds for all $z \in W(0)$ is equivalent to $\sigma > 0$.
Therefore, the case $\zeta = 0$ can be included in the statement.
\end{remark}

The following is a corollary of Theorem~\ref{thm: Lambert W function}, which is not stated in \cite{Nishiguchi 2016}.

\begin{corollary}
Let $\zeta \in \C$ and $\sigma \in \R$ be given.
Then $\Re(z) \ge \sigma$ holds for some $z \in W(\zeta)$ if and only if $\zeta$ and $\sigma$ satisfy one of the following conditions:
\begin{enumerate}[label=\textup{(\roman*)}, start=3]
\item $\sigma\e^{\sigma} \le -{\abs{\zeta}}$.
\item $-{\abs{\zeta}} < \sigma\e^{\sigma} \le \abs{\zeta}$ and
\begin{equation*}
	\abs{\Arg(\zeta)} \le \arccos{\left( \frac{\sigma\e^{\sigma}}{\abs{\zeta}} \right)} + \sqrt{({\abs{\zeta}}\e^{-\sigma})^2 - \sigma^2}.
\end{equation*}
\end{enumerate}
\end{corollary}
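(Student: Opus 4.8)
The plan is to observe that the statement to be proved is simply the logical negation of Theorem~\ref{thm: Lambert W function}, so the entire argument reduces to a careful application of De Morgan's laws, with no new analysis required. Indeed, the assertion that $\Re(z) \ge \sigma$ holds for \emph{some} $z \in W(\zeta)$ is precisely the negation of the assertion that $\Re(z) < \sigma$ holds for \emph{all} $z \in W(\zeta)$. By Theorem~\ref{thm: Lambert W function}, the latter is equivalent to the disjunction of conditions (i) and (ii). Hence the sought condition is the negation of ``(i) or (ii)'', which equals ``(not (i)) and (not (ii))'', and the task is to show that this equals ``(iii) or (iv)''.

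First I would record the negations of the individual clauses. The negation of (i) $\sigma\e^{\sigma} > \abs{\zeta}$ is $\sigma\e^{\sigma} \le \abs{\zeta}$. Condition (ii) has the form ``$A$ and $B$'', where $A$ is the range condition $-\abs{\zeta} < \sigma\e^{\sigma} \le \abs{\zeta}$ and $B$ is the inequality on $\abs{\Arg(\zeta)}$; its negation is ``(not $A$) or (not $B$)''. Here it is essential to keep in mind that $B$ is only meaningful when $A$ holds, because the argument $\sigma\e^{\sigma}/\abs{\zeta}$ of $\arccos$ must lie in $[-1, 1]$ and the radicand $(\abs{\zeta}\e^{-\sigma})^2 - \sigma^2$ must be nonnegative. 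The point is that, as a conjunction, (ii) is automatically false whenever $A$ fails, so in that case its negation holds \emph{vacuously} and $B$ need never be evaluated.

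Next I would split according to where $\sigma\e^{\sigma}$ lies relative to $\pm\abs{\zeta}$, intersecting each region with ``(not (i)) and (not (ii))''. In the region $\sigma\e^{\sigma} > \abs{\zeta}$ condition (i) holds, so the intersection is empty. In the region $-\abs{\zeta} < \sigma\e^{\sigma} \le \abs{\zeta}$ the clause not (i) holds and $A$ holds, so not (ii) collapses to not $B$, namely $\abs{\Arg(\zeta)} \le \arccos(\sigma\e^{\sigma}/\abs{\zeta}) + \sqrt{(\abs{\zeta}\e^{-\sigma})^2 - \sigma^2}$; this is exactly condition (iv). Finally, in the region $\sigma\e^{\sigma} \le -\abs{\zeta}$ the clause not (i) holds automatically (as $-\abs{\zeta} \le \abs{\zeta}$) and $A$ fails, so not (ii) holds vacuously; this is exactly condition (iii). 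Taking the union of the three regions yields ``(iii) or (iv)'', as claimed.

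I expect the only delicate point to be the bookkeeping in the second step, together with the observation above that the conjunctive structure of (ii) lets one avoid evaluating $B$ when $A$ is false. For completeness one should also record that the radicand is nonnegative throughout the regime $-\abs{\zeta} < \sigma\e^{\sigma} \le \abs{\zeta}$, so that (iv) is well-posed; this follows since the two defining inequalities give $\abs{\sigma} \le \abs{\zeta}\e^{-\sigma}$ and hence $\sigma^2 \le (\abs{\zeta}\e^{-\sigma})^2$. Everything else is routine propositional manipulation, so no further computation is needed.
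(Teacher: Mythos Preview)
Your proposal is correct and follows essentially the same route as the paper: both observe that the statement is the logical negation of Theorem~\ref{thm: Lambert W function} and then simplify ``not~(i) and not~(ii)'' into ``(iii) or (iv)'' by De~Morgan. Your version is simply more explicit---you carry out the case split on the location of $\sigma\e^{\sigma}$ and verify well-posedness of~(iv) in detail---whereas the paper compresses the same computation into two bullet points.
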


\begin{proof}
From Theorem~\ref{thm: Lambert W function}, $\Re(z) \ge \sigma$ holds for some $z \in W(\zeta)$ if and only if both of the conditions (i) and (ii) in Theorem~\ref{thm: Lambert W function} does not hold.
Here
\begin{itemize}
\item the condition (i) in Theorem~\ref{thm: Lambert W function} does not hold if and only if $\sigma\e^{\sigma} \le |\zeta|$,
\item the condition (ii) in Theorem~\ref{thm: Lambert W function} does not hold if and only if $\sigma\e^{\sigma} \not\in (-{\abs{\zeta}}, \abs{\zeta}]$ or (iv) holds.
\end{itemize}
Therefore, the equivalence is obtained.
\end{proof}

\subsection{Necessary and sufficient conditions}

By applying Theorem~\ref{thm: Lambert W function} with $\zeta = \tau w\e^{\tau a}$ and $\sigma = \tau\Re(a)$, the following result is immediately obtained in \cite[Theorem 1.2]{Nishiguchi 2016}.

\begin{theorem}[\cite{Nishiguchi 2016}]\label{thm: TE, complex a and w}
Suppose $a, w \in \C$.
Then all the roots of Eq.~\eqref{eq: TE} have negative real parts, i.e., $\tau \in T(a, w)$, if and only if the parameters $a$, $w$, and $\tau$ satisfy one of the following conditions:
\begin{enumerate}[label=\textup{(\roman*)}]
\item $\Re(a) > \abs{w}$.
\item $-{\abs{w}} < \Re(a) \le \abs{w}$ and
\begin{equation}\label{eq: inequality on tau}
	\arccos \bigl( \cos(\tau\Im(a) + \Arg(w)) \bigr) > \arccos{\left( \frac{\Re(a)}{\abs{w}} \right)} + \tau\sqrt{\abs{w}^2 - \Re(a)^2}.
\end{equation}
\end{enumerate}
\end{theorem}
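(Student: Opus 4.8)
The plan is to read the statement off directly from the Lambert $W$ description of the root set, combined with Theorem~\ref{thm: Lambert W function}. First I would recall from \eqref{eq: set of all roots of z + a - we^{-tau z} = 0} that the set of all roots of Eq.~\eqref{eq: TE} is $\frac{1}{\tau}W(\tau w\e^{\tau a}) - a$. Hence every root $z_\star$ has the form $z_\star = \frac{1}{\tau}z - a$ for some $z \in W(\tau w\e^{\tau a})$, so $\Re(z_\star) = \frac{1}{\tau}\Re(z) - \Re(a)$. Therefore all the roots of Eq.~\eqref{eq: TE} have negative real parts if and only if $\Re(z) < \tau\Re(a)$ holds for every $z \in W(\tau w\e^{\tau a})$. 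This reduces the claim to a threshold condition of exactly the type treated in Theorem~\ref{thm: Lambert W function}.

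Next I would apply Theorem~\ref{thm: Lambert W function} with the substitutions $\zeta \coloneqq \tau w\e^{\tau a}$ and $\sigma \coloneqq \tau\Re(a)$, translating each ingredient of its conditions (i) and (ii). The modulus is $\abs{\zeta} = \tau{\abs{w}}\e^{\tau\Re(a)}$ and the threshold quantity is $\sigma\e^{\sigma} = \tau\Re(a)\e^{\tau\Re(a)}$, so the common positive factor $\tau\e^{\tau\Re(a)}$ cancels throughout. Consequently condition (i), $\sigma\e^{\sigma} > \abs{\zeta}$, collapses to $\Re(a) > \abs{w}$, and the band $-{\abs{\zeta}} < \sigma\e^{\sigma} \le \abs{\zeta}$ in condition (ii) collapses to $-{\abs{w}} < \Re(a) \le \abs{w}$. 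For the remaining inequality I would compute $\sigma\e^{\sigma}/{\abs{\zeta}} = \Re(a)/{\abs{w}}$ and $(\abs{\zeta}\e^{-\sigma})^2 - \sigma^2 = \tau^2({\abs{w}}^2 - \Re(a)^2)$, so that its right-hand side becomes $\arccos(\Re(a)/{\abs{w}}) + \tau\sqrt{\abs{w}^2 - \Re(a)^2}$, matching \eqref{eq: inequality on tau}.

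The one genuinely non-mechanical step is the argument term on the left-hand side. Since $\zeta = \tau{\abs{w}}\e^{\tau\Re(a)}\e^{\iu(\Arg(w) + \tau\Im(a))}$ has a positive real prefactor, $\Arg(w) + \tau\Im(a)$ is an argument of $\zeta$ but need not be its principal value. I would handle this with the identity $\abs{\Arg(\zeta)} = \arccos(\cos(\Arg(\zeta)))$, valid because $\abs{\Arg(\zeta)} \in [0, \pi]$, and then use the $2\pi$-periodicity of cosine to replace $\cos(\Arg(\zeta))$ by $\cos(\Arg(w) + \tau\Im(a))$. This yields $\abs{\Arg(\zeta)} = \arccos(\cos(\tau\Im(a) + \Arg(w)))$, precisely the left-hand side of \eqref{eq: inequality on tau}; I expect verifying this reduction carefully to be the main (and essentially only) obstacle.

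Finally I would dispatch the degenerate case $w = 0$: then $\zeta = 0$, and by the Remark following Theorem~\ref{thm: Lambert W function} the criterion becomes $\sigma > 0$, i.e.\ $\Re(a) > 0$, which agrees with condition (i) read with $\abs{w} = 0$, while condition (ii) is vacuous because $-{\abs{w}} < \Re(a) \le \abs{w}$ cannot hold. Assembling the translated conditions (i) and (ii) then gives the stated equivalence.
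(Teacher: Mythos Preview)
Your proposal is correct and follows exactly the approach the paper indicates: apply Theorem~\ref{thm: Lambert W function} with $\zeta = \tau w\e^{\tau a}$ and $\sigma = \tau\Re(a)$, then simplify. Your write-up is in fact more detailed than the paper's own treatment, which merely states that the result is ``immediately obtained'' from this substitution; your careful handling of the argument term via $\abs{\Arg(\zeta)} = \arccos(\cos(\Arg(w) + \tau\Im(a)))$ and of the degenerate case $w = 0$ fills in precisely the steps the paper leaves implicit.
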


Eq.~\eqref{eq: TE} with complex $a$ and $w$ has been investigated by many authors (e.g., see \cite{Sherman 1952}, \cite{Barwell 1975}, \cite{Braddock--Driessche 1975/76}, \cite{Maset 2000}, \cite{Cahlon--Schmidt 2002}, \cite{Wei--Zhang 2004}, \cite{Shinozaki--Mori 2006}, \cite{Khokhlova--Kipnis--Malygina 2011}, \cite{Breda 2012}).
However, as far as we know, the necessary and sufficient condition given in Theorem~\ref{thm: TE, complex a and w} has not been obtained before \cite{Nishiguchi 2016}.

\begin{remark}
Let $\sigma \in \R$ be given.
By letting $z' \coloneqq z - \sigma$ in the transcendental equation $z\e^z = \zeta$, the equation becomes
\begin{equation}\label{eq: from Lambert W to TE}
	z' + \sigma - \zeta \e^{-\sigma} \e^{-z'} = 0,
\end{equation}
where $\Re(z) < \sigma$ if and only if $\Re(z') < 0$.
Then by applying Theorem~\ref{thm: TE, complex a and w} to Eq.~\eqref{eq: from Lambert W to TE}, the statement of Theorem~\ref{thm: Lambert W function} is obtained.
This means that Theorems~\ref{thm: Lambert W function} and \ref{thm: TE, complex a and w} are logically equivalent.
\end{remark}

\begin{remark}\label{rmk: Re(a) = abs{w}}
When $\Re(a) = \abs{w}$, the right-hand side of inequality~\eqref{eq: inequality on tau} is equal to $0$.
Therefore, the condition (ii) in Theorem~\ref{thm: TE, complex a and w} becomes
\begin{equation*}
	\arccos \bigl( \cos(\tau\Im(a) + \Arg(w)) \bigr) > 0,
\end{equation*}
i.e., $\tau\Im(a) + \Arg(w) \not\in 2\pi\Z$.
\end{remark}

Inequality~\eqref{eq: inequality on tau} in the condition (ii) contains the delay parameter $\tau$.
This makes clear that the case of imaginary $a$ (i.e., $\Im(a) \ne 0$) brings a qualitative change to the condition on $\tau$ for which all the roots of \eqref{eq: TE} have negative real parts.
This fact has been partially known in the literature before \cite{Nishiguchi 2016} (e.g., see \cite{Yeung--Strogatz 1999}, \cite{Wei--Zhang 2004}, \cite{Hovel--Scholl 2005}, \cite{Matsunaga 2009 AMC}, and \cite{Matsunaga 2009b}).
Here the function $\arccos(\cos(\argdot)) \colon \R \to \R$ is the $2\pi$-periodic function satisfying
\begin{equation*}
	\arccos(\cos(\theta)) = \abs{\theta}
\end{equation*}
for all $\theta \in [-\pi, \pi]$.
See \cite[Theorems 3.2 and 3.3]{Nishiguchi 2016} for further details.

The following is a consequence of Theorem~\ref{thm: TE, complex a and w}.

\begin{theorem}\label{thm: TE, real a and complex w}
Suppose $a \in \R$ and $w \in \C$.
Then all the roots of Eq.~\eqref{eq: TE} have negative real parts, i.e., $\tau \in T(a, w)$, if and only if the parameters $a$, $w$, and $\tau$ satisfy one of the following conditions:
\begin{enumerate}[label=\textup{(\roman*)}]
\item $a \ge \abs{w}$ and $a \ne w$.
\item $-{\abs{w}} < a < \abs{w}$ and
\begin{equation}\label{eq: inequality on tau, real a}
	\abs{\Arg(w)} > \arccos{\left( \frac{a}{\abs{w}} \right)} + \tau\sqrt{\abs{w}^2 - a^2}.
\end{equation}
\end{enumerate}
\end{theorem}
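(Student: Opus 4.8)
The plan is to derive this theorem directly from Theorem~\ref{thm: TE, complex a and w} by specializing to the case $\Im(a) = 0$, so that $\Re(a) = a$. Two things must be verified: that the transcendental inequality~\eqref{eq: inequality on tau} collapses to~\eqref{eq: inequality on tau, real a}, and that the case split $\{\Re(a) > \abs{w}\}$ versus $\{-\abs{w} < \Re(a) \le \abs{w}\}$ of Theorem~\ref{thm: TE, complex a and w} can be reorganized into the claimed split $\{a \ge \abs{w},\ a \ne w\}$ versus $\{-\abs{w} < a < \abs{w}\}$. The only nontrivial point is the behavior at the boundary $a = \abs{w}$.

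First I would substitute $\Im(a) = 0$ into~\eqref{eq: inequality on tau}. Its left-hand side becomes $\arccos(\cos(\Arg(w)))$, and since $\Arg(w) \in (-\pi, \pi]$, the identity $\arccos(\cos\theta) = \abs{\theta}$ on $[-\pi,\pi]$ (recorded in the text after Theorem~\ref{thm: TE, complex a and w}) gives $\arccos(\cos(\Arg(w))) = \abs{\Arg(w)}$. Hence, for real $a$, condition (ii) of Theorem~\ref{thm: TE, complex a and w} reads exactly as $-\abs{w} < a \le \abs{w}$ together with~\eqref{eq: inequality on tau, real a}, while condition (i) is simply $a > \abs{w}$.

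It then remains to account for the boundary $a = \abs{w}$, which Theorem~\ref{thm: TE, complex a and w} assigns to case (ii) but the present statement assigns to case (i). Here I would invoke Remark~\ref{rmk: Re(a) = abs{w}}: when $a = \abs{w}$ the right-hand side of~\eqref{eq: inequality on tau} vanishes, so the condition reduces to $\abs{\Arg(w)} > 0$, i.e.\ $\Arg(w) \ne 0$. Note that $a = \abs{w}$ falls under case (ii) only when $a = \abs{w} > 0$, because the strict lower bound $-\abs{w} < a$ forces $a > 0$; the degenerate sub-case $a = \abs{w} = 0$ yields $a = w = 0$ and belongs to neither case on either side, consistently. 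For $a = \abs{w} > 0$, the condition $\Arg(w) \ne 0$ is equivalent to $w \ne a$, which is precisely the extra clause appearing in case (i) of the claim. I would therefore move the boundary $a = \abs{w}$ from case (ii) to case (i).

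Finally I would confirm that the two descriptions of case (i) agree on the open part $a > \abs{w}$: for real $a$ with $a > \abs{w}$ the equality $w = a$ is impossible (it would force $\abs{w} = a$), so $a \ne w$ holds automatically there, and the interior $-\abs{w} < a < \abs{w}$ is left untouched. Assembling these observations yields the stated equivalence. The main obstacle is the careful bookkeeping at $a = \abs{w}$: isolating the degenerate sub-case $a = \abs{w} = 0$ and verifying that the vanishing of the right-hand side of~\eqref{eq: inequality on tau} translates exactly into the clause $a \ne w$.
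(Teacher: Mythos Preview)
Your proposal is correct and follows essentially the same route as the paper's proof: specialize Theorem~\ref{thm: TE, complex a and w} to $\Im(a)=0$, use $\arccos(\cos\Arg(w))=\abs{\Arg(w)}$, and invoke Remark~\ref{rmk: Re(a) = abs{w}} to reassign the boundary $a=\abs{w}$ from case~(ii) to case~(i) via the equivalence $\Arg(w)\ne 0 \iff w\ne a$. The only cosmetic difference is that the paper disposes of $w=0$ explicitly at the outset, whereas you fold it into the boundary discussion; either treatment is fine.
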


\begin{proof}
When $w = 0$, Eq.~\eqref{eq: TE} becomes $z + a = 0$.
Therefore, $-a$ is the only root of Eq.~\eqref{eq: TE}, and the condition $a > 0$ is included in the condition (i) in Theorem~\ref{thm: TE, real a and complex w}.
We now assume $w \ne 0$.
In view of
\begin{equation*}
	\arccos(\cos \Arg(w)) = \abs{\Arg(w)},
\end{equation*}
we only need to check the case $a = \abs{w}$.
In this case, the condition (ii) in Theorem~\ref{thm: TE, complex a and w} becomes $\Arg(w) \ne 0$ from Remark~\ref{rmk: Re(a) = abs{w}}, and $\Arg(w) = 0$ is equivalent to $a = w$.
This completes the proof.
\end{proof}

\begin{remark}
By letting $z' \coloneqq z + \iu\Im(a)$, Eq.~\eqref{eq: TE} becomes
\begin{equation*}
	z' + \Re(a) - w\e^{\iu \tau\Im(a)}\e^{-\tau z'} = 0.
\end{equation*}
Therefore, Theorems~\ref{thm: TE, complex a and w} and \ref{thm: TE, real a and complex w} are logically equivalent.
\end{remark}

A proof of Theorem~\ref{thm: T(a, w), real a} is obtained by using Theorem~\ref{thm: TE, real a and complex w}.

\begin{proof}[Proof of Theorem~\ref{thm: T(a, w), real a} based on Theorem~\ref{thm: TE, real a and complex w}]
We give the proofs of the statements (I), (II), and (III).

\begin{enumerate}
\item[(I)] When $a \ge \abs{w}$ and $a \ne w$, it holds that all the roots of Eq.~\eqref{eq: TE} have negative real parts from Theorem~\ref{thm: TE, real a and complex w} independently from $\tau > 0$.
Therefore, $T(a, w) = (0, \infty)$ holds.
Conversely, we suppose that $T(a, w) = (0, \infty)$ holds.
By using Theorem~\ref{thm: TE, real a and complex w} again, we have ``$a \ge \abs{w}$ and $a \ne w$" or ``$-{\abs{w}} < a < \abs{w}$".
We suppose that the latter condition holds and derive a contradiction.
Theorem~\ref{thm: TE, real a and complex w} shows that inequality~\eqref{eq: inequality on tau, real a} holds for all $\tau > 0$.
However, this is impossible because of
\begin{equation*}
	\lim_{\tau \to \infty} \tau\sqrt{\abs{w}^2 - a^2}
	= \infty.
\end{equation*}
The above argument shows that $T(a, w) = (0, \infty)$ if and only if $a \ge \abs{w}$ and $a \ne w$.
Thus, the statement (I) is obtained in view of Remark~\ref{rmk: absolute stability}.\item[(II)] When $w \ne 0$ and $\Re(w) < a < \abs{w}$, inequality~\eqref{eq: inequality on tau, real a} is satisfied if $0 < \tau < \tau_\mathrm{c}(a, w)$.
Therefore, $(0, \tau_\mathrm{c}(a, w)) \subset T(a, w)$ holds from Theorem~\ref{thm: TE, real a and complex w}.
Conversely, we suppose that $T(a, w)$ is a nonempty proper subset of $(0, \infty)$.
From Theorem~\ref{thm: TE, real a and complex w}, we necessarily have $-{\abs{w}} < a < \abs{w}$, which implies $w \ne 0$.
Theorem~\ref{thm: TE, real a and complex w} also shows that $\tau \in T(a, w)$ implies that inequality~\eqref{eq: inequality on tau, real a} holds.
Therefore, we have
\begin{equation*}
	\abs{\Arg(w)} > \arccos{\left( \frac{a}{\abs{w}} \right)},
\end{equation*}
which implies $\Re(w) < a$ because $\cos|_{[0, \pi]}$ is strictly monotonically decreasing.
Then we have $T(a, w) \subset (0, \tau_\mathrm{c}(a, w))$.
The above argument shows the statement (II).
\item[(III)] When $a \le \Re(w)$, it holds that $T(a, w)$ is empty from the statements (I) and (II) in Theorem~\ref{thm: T(a, w), real a}.
When $a > \Re(w)$, the statement (I) in Theorem~\ref{thm: T(a, w), real a} also shows that $T(a, w) = (0, \infty)$ if $a \ge \abs{w}$.
If $a < \abs{w}$, $w$ is necessarily nonzero, and the statement (II) in Theorem~\ref{thm: T(a, w), real a} shows that $T(a, w)$ is a nonempty proper subset of $(0, \infty)$.
Therefore, the statement (III) is obtained.
\end{enumerate}
This completes the proof.
\end{proof}

Theorem~\ref{thm: TE, real a and complex w} is also obtained from Theorem~\ref{thm: T(a, w), real a}.
Therefore, these theorems are logically equivalent.

\end{document}